\newtheorem{theorem}{Theorem}[section]
\newtheorem{lemma}[theorem]{Lemma}
\newtheorem{proposition}[theorem]{Proposition}
\newtheorem{corollary}{Corollary}
\theoremstyle{definition}
\newtheorem{definition}{Definition}
\newtheorem{remark}{Remark}
\newtheorem{conjecture}{Conjecture}
\newtheorem{problem}{Problem}
\begin{document}

\title[Metric results on sumsets and Cartesian products]{Metric results on sumsets and Cartesian products of classes
 of Diophantine sets}

\author{Johannes Schleischitz}

\thanks{Middle East Technical University, Northern Cyprus Campus, Kalkanli, G\"uzelyurt \\
    johannes@metu.edu.tr ; jschleischitz@outlook.com}

\begin{abstract}
	Erd\H{o}s proved that any real number can be written
	as a sum, and a product, of two Liouville numbers. 
	Motivated by these results, we study sumsets of classes
	of real numbers with prescribed (or bounded)
	irrationality exponents. We show that such sumsets
	turn out to be large in general, indeed
	almost every real number with respect to Lebesgue measure 
	can be written as the
	sum of two numbers with sufficiently large prescribed irrationality
	exponents. In fact the Hausdorff dimension of the complement
	is small, and the result remains true if we impose considerably refined conditions on the orders of rational approximation
	(``exact approximation'' with respect to an approximation function).
	As an application, we show that in many cases 
	the Hausdorff dimension of
    Cartesian products of sets with prescribed irrationality exponent 
    exceeds the expected dimension,
    that is the sum of the single Hausdorff dimensions. 
    We also address their packing dimensions.
    Similar results hold when restricting to classical
    missing digit Cantor sets, relative to its natural Cantor measure.
    In particular, we prove that the subset
    of numbers with prescribed large irrationality exponent 
    has full packing dimension, i.e. the same packing dimension as the 
    entire Cantor set.
    This complements some results on the Hausdorff
    dimension of these sets, which is an extensively studied topic
    in Diophantine approximation.   
    Our proofs are based on ideas of Erd\H{o}s, but vastly extend them.
\end{abstract}

\maketitle

{\footnotesize{

{\em Keywords}: Hausdorff dimension, packing dimension, irrationality exponent, continued fractions\\
Math Subject Classification 2010: 11J04, 11J82, 11J83}}


\section{Hausdorff dimensions and Cartesian products}  \label{int01}

Since we aim to establish metrical results, we start by recalling 
some fundamental metric concepts.
Hausdorff measure and Hausdorff
dimension are widely used concepts in measuring the size
of a set. 
We will throughout denote by $\dim(A)$ the
Hausdorff dimension of the set $A$. We will sporadically deal with
the packing dimension $\dim_{P}(A)$ of a $A\subseteq \mathbb{R}^{n}$ as well. We omit
the exact definitions and refer to Falconer~\cite{falconer}.
We remind that the Hausdorff dimension of a set
never exceeds its packing dimension.

We investigate Hausdorff dimensions of sumsets and 
Cartesian products of certain Euclidean sets. Let us focus 
on Cartesian products for now.
This topic has been addressed for various classes of sets, see for example,
Besicovitch and Moran~\cite{moran}, Eggleston~\cite{egg}, 
Marstrand~\cite{mars}, Xiao~\cite{xiao}, Larman~\cite{larman}, Wegmann~\cite{wegmann}.   
A fundamental property of Hausdorff dimension proved
by Marstrand~\cite{mars} is that for any 
measurable sets $A,B$ 
when taking their Cartesian product we have
\begin{equation} \label{eq:most}
\dim(A\times B) \geq \dim A+\dim B.
\end{equation}
In general, we do not have equality in \eqref{eq:most}.
However, in many interesting situations, the equality does hold, for example,
for products of classical fractals like the 
Cantor middle-third set~\cite{falconer}.
Critria on the sets $A,B$ that imply that the equality 
holds can be found 
as well in~\cite{falconer}.
An upper bound due to Tricot~\cite{tricot}
for the left hand side of \eqref{eq:most}
involving the packing dimension
is contained in Theorem~\ref{uppack}
below.

\begin{theorem}[Tricot] \label{uppack}
	For any measurable sets $A,B$ in $\mathbb{R}^{n}$, we have 
	\[
	\dim(A\times B)\leq \dim(A)+\dim_{P}(B).
	\]
	Hence, if $A_{1},\ldots,A_{n}$ are measurable 
	subsets of $\mathbb{R}$, then
	\[
	\dim(A_{1}\times A_{2}\times \cdots \times A_{n}) \leq n-1+ \min_{1\leq i\leq n} \dim(A_{i}). 
	\]  
\end{theorem}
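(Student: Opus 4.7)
The first inequality is the classical Tricot bound, and I would prove it via the characterization of packing dimension through the modified upper box-counting dimension. Recall that
\[
\dim_{P}(B) = \inf\Bigl\{\sup_{i\geq 1}\overline{\dim}_{B}(B_{i})\;:\; B\subseteq \bigcup_{i\geq 1} B_{i}\Bigr\},
\]
where $\overline{\dim}_{B}$ denotes upper box-counting dimension. Combined with the countable stability of Hausdorff dimension, namely $\dim(A\times B) = \sup_{i} \dim(A\times B_{i})$ for any countable decomposition $B=\bigcup_{i} B_{i}$, it suffices to establish the inequality $\dim(A\times B)\leq \dim(A)+\overline{\dim}_{B}(B)$ whenever $\overline{\dim}_{B}(B)$ is finite.

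For this reduced claim, fix $s>\dim(A)$ and $t>\overline{\dim}_{B}(B)$. Given $r>0$ small enough, I would select an efficient $r$-cover $(U_{j})_{j\geq 1}$ of $A$ with $\sum_{j}\mathrm{diam}(U_{j})^{s}<\varepsilon$ and $\mathrm{diam}(U_{j})\leq r$ for all $j$. By the definition of upper box dimension, for each $j$ the set $B$ can be covered by at most $\mathrm{diam}(U_{j})^{-t}$ balls of diameter $\mathrm{diam}(U_{j})$ (after shrinking $r$ if necessary). Taking products yields a cover of $A\times B$ in which the $(s+t)$-cost is bounded by
\[
\sum_{j} \mathrm{diam}(U_{j})^{-t}\cdot \mathrm{diam}(U_{j})^{s+t} \;\ll\; \sum_{j} \mathrm{diam}(U_{j})^{s} < \varepsilon,
\]
up to an absolute constant from passing between diameters in the product and in each factor. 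Letting $\varepsilon\to 0$ gives $H_{s+t}(A\times B)=0$, hence $\dim(A\times B)\leq s+t$, and finally $s\downarrow \dim(A)$, $t\downarrow \overline{\dim}_{B}(B)$ closes the argument.

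For the consequence, without loss of generality I assume $\dim(A_{1})=\min_{1\leq i\leq n}\dim(A_{i})$; permuting coordinates is an isometry and preserves Hausdorff dimension. Writing $B:=A_{2}\times\cdots\times A_{n}\subseteq \mathbb{R}^{n-1}$ and applying the first inequality gives
\[
\dim(A_{1}\times A_{2}\times \cdots \times A_{n}) \leq \dim(A_{1}) + \dim_{P}(B).
\]
Since $B\subseteq \mathbb{R}^{n-1}$, its packing dimension is at most $n-1$, and the claimed bound follows.

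The main technical obstacle is the first step: converting the modified upper box definition of $\dim_{P}$ into a genuinely useful cover of $A\times B$. One has to carefully choose the cover of $A$ so that the diameter at each scale matches the scale at which $B$ is being covered; an alternative is to dyadically split the cover of $A$ into pieces of diameter in $[2^{-k-1},2^{-k}]$, estimate on each such piece using $N(B,2^{-k})\leq 2^{kt}$, and then sum a geometric series. The countable stability reduction and the trivial bound $\dim_{P}\leq n-1$ on $\mathbb{R}^{n-1}$ are then essentially bookkeeping.
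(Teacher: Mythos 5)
Your proof is correct. The paper does not actually prove Theorem~\ref{uppack}: the first inequality is simply cited from Tricot, and the $n$-fold consequence is left implicit. Your derivation via the modified upper box-counting characterization of $\dim_{P}$ together with countable stability of Hausdorff dimension is the standard textbook argument and supplies exactly the missing details. Two small points are worth stating explicitly: (i) in the characterization $\dim_{P}(B)=\inf\{\sup_{i}\overline{\dim}_{B}(B_{i})\}$ the pieces $B_{i}$ should be taken bounded, so that $\overline{\dim}_{B}(B_{i})$ is finite and the box-counting estimate $N(B_{i},\delta)\leq\delta^{-t}$ has content; and (ii) any cover element $U_{j}$ of diameter $0$ must be enlarged slightly so that $\operatorname{diam}(U_{j})^{-t}$ is meaningful. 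Neither affects the argument. Your reduction of the $n$-fold bound to the two-factor inequality, using $\dim_{P}(A_{2}\times\cdots\times A_{n})\leq n-1$ since the set lives in $\mathbb{R}^{n-1}$, is exactly the intended deduction; one should just note that the two-factor inequality is being applied with $A\subseteq\mathbb{R}$ and $B\subseteq\mathbb{R}^{n-1}$, which your proof handles since nothing in it uses a common ambient dimension.
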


See also Bishop, Peres~\cite{bishop} for refinements.
For completeness we also want to mention the estimates
\begin{equation}  \label{eq:uppak}
    \dim_P(A)+ \dim_P(B)\ge \dim_P(A\times B) \ge \dim(A) + \dim_P(B)
\end{equation}
due to Tricot~\cite{tricot} as well, which we will not use as frequently in this paper.
One purpose of this paper is to find sets that naturally occur in Diophantine
approximation, whose Cartesian products are of Hausdorff dimension
strictly larger than the sum of the single dimensions, i.e. there is no
equality in \eqref{eq:most}. On the way, we will emcompass 
sumsets and study their properties.
An important tool to achieve this goal is 
the following rather elementary property of
Hausdorff measures and dimensions, applied in suitable contexts.

\begin{proposition} \label{pp}
	Let $A\subseteq \mathbb{R}^{m}$ be a measurable set
	and $\phi: \mathbb{R}^{m}\to \mathbb{R}^{n}$
	be Lipschitz. Then $\dim \phi(A)\leq \dim(A)$. More generally,
	for any $s\geq 0$ writing $H_{s}$ for the $s$-dimensional Hausdorff
	measure, we have $H_{s}(\phi(A)) \ll_{s,m,n} H_{s}(A)$.
\end{proposition}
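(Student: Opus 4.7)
The plan is to pass an $r$-cover of $A$ through $\phi$ to obtain a cover of $\phi(A)$ at a comparable scale, controlling the sum of diameters raised to the power $s$. Let $L$ denote a Lipschitz constant of $\phi$, so that $|\phi(x)-\phi(y)|\leq L|x-y|$ for all $x,y\in\mathbb{R}^{m}$.

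First I would fix $r>0$ and an arbitrary countable $r$-cover $(U_{i})_{i\geq 1}$ of $A$. Replacing each $U_{i}$ by $U_{i}\cap A$ (which does not enlarge diameters) and then considering $V_{i}:=\phi(U_{i}\cap A)$, the family $(V_{i})_{i\geq 1}$ covers $\phi(A)$. The Lipschitz condition gives $\mathrm{diam}(V_{i})\leq L\cdot \mathrm{diam}(U_{i})\leq Lr$, so $(V_{i})_{i\geq 1}$ is an $Lr$-cover of $\phi(A)$. Summing yields
\[
\sum_{i\geq 1}\mathrm{diam}(V_{i})^{s}\;\leq\;L^{s}\sum_{i\geq 1}\mathrm{diam}(U_{i})^{s}.
\]
Taking the infimum over all $r$-covers of $A$ on the right and noting that the left side is at least $H_{Lr,s}(\phi(A))$, we obtain the key bound
\[
H_{Lr,s}(\phi(A))\;\leq\;L^{s}\,H_{r,s}(A).
\]

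Next, letting $r\to 0$ (so that $Lr\to 0$ as well) and using monotonicity of both $H_{\cdot,s}(\phi(A))$ and $H_{\cdot,s}(A)$ in the scale parameter, I pass to the limit to conclude
\[
H_{s}(\phi(A))\;\leq\;L^{s}\,H_{s}(A),
\]
which is exactly the claim $H_{s}(\phi(A))\ll_{s}H_{s}(A)$ with implicit constant depending only on $L$ and $s$.

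Finally, to derive the dimension inequality, I would take any $s>\dim(A)$, so that $H_{s}(A)=0$ by definition of Hausdorff dimension. The previous estimate then forces $H_{s}(\phi(A))=0$, which means $\dim\phi(A)\leq s$. Taking the infimum over all such $s$ yields $\dim\phi(A)\leq\dim(A)$. There is no real obstacle here; the only point requiring slight care is the ordering: one must compare the $r$-measure of $A$ with the $Lr$-measure (not the $r$-measure) of $\phi(A)$ before letting $r\to 0$, since Lipschitz maps may only shrink distances up to the factor $L$, not preserve the scale exactly.
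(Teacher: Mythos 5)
Your proof is correct and is the standard argument; the paper itself supplies no proof of Proposition~\ref{pp} but simply cites Falconer~\cite[Proposition~2.2, Corollary~2.4]{falconer} (and \cite{hss}), where the same Lipschitz push-forward of covers appears. Your care about comparing $H_{r,s}(A)$ with $H_{Lr,s}(\phi(A))$ before taking $r\to 0$ is exactly the right point to watch.
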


See~\cite[Proposition~2.2, Corollary~2.4]{falconer} and also
\cite[Proposition 2.2]{hss} for a more general version. In 
Proposition~\ref{pp},
and the sequel, we use Vinogradov's notation $A\ll_{.} B$ which means 
$A\leq c(.)B$, that is $A$ does not exceed $B$ by more than some multiplicative constant depending on the subscript variables
only, with an absolute constant if no subscript occurs. As usual, we shall also use $A\asymp B$ as short notation for $A\ll B\ll A$.

\section{Sumsets and Cartesian products of the set of Liouville numbers} \label{1}

Even though the deepest results of the paper appear in 
\S~\ref{ch3}, we prefer to start our investigation with
Cartesian products of Liouville numbers, where the historcial context
and motivation can be presented more naturally.

Recall that $\xi\in\mathbb{R}\setminus \mathbb{Q}$ is called 
Liouville number if the inequality
\[
|\xi-\frac{p}{q}| \leq q^{-N}
\]
has solutions in rational numbers $p/q$ for arbitrarily large $N$.
Let us denote the set of Liouville numbers by $\mathscr{L}$. 
The Hausdorff dimension of $\mathscr{L}$ equals to $0$. But,
on the other hand, it is co-meager, i.e. its complement $\mathbb{R}\setminus \mathscr{L}$ 
is of first category. See Chapter 2 of Oxtoby's book~\cite{oxtoby}  
for short proofs of both results.
For refined further measure theoretic results on
$\mathscr{L}$ when considering general Hausdorff $f$-measures, we 
refer to Olsen and Renfro~\cite{olsen} and Bugeaud, Dodson and Kristensen~\cite{bdk}.

A well-known result of Erd\H{o}s~\cite{erdos}
that motivates the investigations in this paper
claims that every
real number can be written as a sum (or product if $\ne 0$)
of two Liouville numbers.
Erd\H{o}s gave two proofs. One is based on the mentioned
fact that $\mathscr{L}$ 
is co-meager. Indeed, consequently the set
$\mathscr{L}\cap \mathscr{L}_{\xi}$ with 
$\mathscr{L}_{\xi}=\{\xi-x: x\in\mathscr{L}\}$ is co-meager
as well for any $\xi\in\mathbb{R}$, thus non-empty. Now any pair $(y,\xi-y)$   
with $y$ in the intersection $\mathscr{L}\cap \mathscr{L}_{\xi}$ consists of Liouville numbers 
that by construction sum up to a given $\xi$.
The argument
can be widely extended,
see Rieger~\cite{rieger}, Schwarz~\cite{schwarz}, Burger~\cite{b1, b2}
and Senthil Kumar, Thangadurai, Waldschmidt~\cite{kumar}.
For the second proof, Erd\H{o}s effectively constructs 
Liouville numbers $x,y$ with the property that $x+y=\xi$
for any given $\xi\in\mathbb{R}$. Let us recall this proof as well. Suppose $\xi$
has decimal expansion $\xi=0.d_{1}d_{2}\ldots$, $0\le d_j\le 9$. 
Define $b_{j}=j!$. Let $x$ be the number with the same
base 10 digits of $\xi$ for indices from $b_{2j}+1$ to $b_{2j+1}$, and $0$
otherwise, and let $y$ be the number having the digits of $\xi$ in the remaining intervals from $b_{2j+1}+1$ to $b_{2j}$ and $0$ otherwise. 
Then $x+y=\xi$. On the other hand,
$x,y$ are both Liouville numbers. Indeed, the rational numbers
obtained from cutting off the decimal expansion 
of $x$ and $y$ after positions
of the form $b_{2j+1}$
and $b_{2j}$ respectively, will be very good rational approximations to $x$
and $y$, respectively. (One thing unnoticed by Erd\H{o}s is that
$x$ or $y$ could potentially be rational. However,
the method is flexible enough to overcome this problem by a short variation argument.)

Now observe the following consequence of Erd\H{o}s' result
when combined with Proposition~\ref{pp}: Since the map
\begin{align*}
\mathscr{L}\times \mathscr{L} &\longmapsto \mathbb{R} \\
(x,y)&\longmapsto x+y
\end{align*}
is Lipschitz continuous and surjective, the product set $\mathscr{L}\times \mathscr{L}$ has Hausdorff dimension at least $1$, even though $\mathscr{L}$
has Hausdorff dimension $0$. 
In fact
\begin{equation} \label{eq:spezi}
\dim(\mathscr{L}\times \mathscr{L})=1,
\end{equation}
since the reverse bound follows from Theorem~\ref{uppack}.
Though \eqref{eq:spezi} 
is an easy implication of Erd\H{o}s' result, the author
did not find this fact explicitly in the literature.
We should remark that \eqref{eq:spezi} applies, by the same argument, to any subset $L$ of $\mathscr{L}$ with the property that $L+L=\mathbb{R}$.
As pointed out to me by Sidney A. Morris, it has recently been proved that there is an abundance
of these sets, see~\cite{morris} for details.
In the sequel, we write $A^{n}$
for the $n$-fold Cartesian product $A\times A \cdots \times A$
of a set $A$.
We use a similar idea to generalise \eqref{eq:spezi}
to the $n$-fold product $\mathscr{L}^n$. 
The packing dimension of $\mathscr{L}^n$ will also be calculated.

\begin{theorem} \label{th01}
	For any integer $n\geq 1$, the set $\mathscr{L}^{n}$ has Hausdorff dimension $n-1$ and packing dimension $n$.
\end{theorem}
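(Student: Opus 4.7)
The upper bounds are essentially free. Since $\dim(\mathscr{L})=0$, Theorem~\ref{uppack} gives $\dim(\mathscr{L}^{n})\le n-1+\min_{i}\dim(\mathscr{L})=n-1$, and $\dim_{P}(\mathscr{L}^{n})\le \dim_{P}(\mathbb{R}^{n})=n$ is trivial. So the real content lies in the matching lower bounds.

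For $\dim(\mathscr{L}^{n})\ge n-1$, my plan is to run an $(n-1)$-parameter variant of Erd\H{o}s' Baire category proof. Consider the Lipschitz map
\[
\phi:\mathbb{R}^{n}\to\mathbb{R}^{n-1},\qquad \phi(x_{1},\ldots,x_{n})=(x_{1}+x_{n},\ldots,x_{n-1}+x_{n}),
\]
and aim to show $\phi(\mathscr{L}^{n})=\mathbb{R}^{n-1}$. Given $(y_{1},\ldots,y_{n-1})\in\mathbb{R}^{n-1}$, each of the $n$ sets $\mathscr{L},\ y_{1}-\mathscr{L},\ \ldots,\ y_{n-1}-\mathscr{L}$ is co-meager in $\mathbb{R}$ (they are affine images of $\mathscr{L}$), so their intersection is co-meager and in particular non-empty. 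Any $x_{n}$ in the intersection yields $x_{i}:=y_{i}-x_{n}\in\mathscr{L}$ for $1\le i\le n-1$, giving a preimage $(x_{1},\ldots,x_{n})\in\mathscr{L}^{n}$ of $(y_{1},\ldots,y_{n-1})$ under $\phi$. Proposition~\ref{pp} then implies $\dim(\mathscr{L}^{n})\ge \dim\phi(\mathscr{L}^{n})=\dim\mathbb{R}^{n-1}=n-1$.

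For $\dim_{P}(\mathscr{L}^{n})\ge n$, the key point is that $\mathscr{L}^{n}$ is itself co-meager in $\mathbb{R}^{n}$: picking a dense $G_{\delta}$ set $G\subseteq\mathscr{L}$, the product $G^{n}$ is a dense $G_{\delta}$ in $\mathbb{R}^{n}$ contained in $\mathscr{L}^{n}$. I would then invoke (or verify in a few lines) the general fact that every co-meager $A\subseteq\mathbb{R}^{n}$ has packing dimension $n$. Using the modified upper box characterization $\dim_{P}(A)=\inf\{\sup_{i}\overline{\dim}_{B}(A_{i}):A\subseteq\bigcup_{i}A_{i}\}$, a Baire pigeonhole forces at least one piece $A_{i}$ in any countable cover to be non-meager; the closure of a non-meager set in $\mathbb{R}^{n}$ has non-empty interior, and so $\overline{\dim}_{B}(A_{i})=n$.

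The only step that is not essentially formal is the surjectivity of $\phi$, which is where Erd\H{o}s' theorem is really doing the work; everything else is bookkeeping. The mild subtlety worth flagging is the choice of $\phi$ with a single shared coordinate $x_{n}$: it is the minimal construction producing an $\mathbb{R}^{n-1}$-valued surjection whose fibre equations can be solved simultaneously in $\mathscr{L}$ by Baire category, and it matches the upper bound from Theorem~\ref{uppack} exactly.
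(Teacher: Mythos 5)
Your proposal is correct. The Hausdorff-dimension part is essentially identical to the paper's first proof: you exhibit the Lipschitz map $\phi(x_{1},\dots,x_{n})=(x_{1}+x_{n},\dots,x_{n-1}+x_{n})$ (the paper calls it $\Psi$, with the shared coordinate indexed $x_{0}$) and prove surjectivity via a Baire category intersection of co-meager affine images of $\mathscr{L}$.

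The packing-dimension lower bound, however, goes by a genuinely different route. The paper bootstraps from the Hausdorff result one dimension up, writing
$n=\dim(\mathscr{L}^{n+1})\leq\dim(\mathscr{L})+\dim_{P}(\mathscr{L}^{n})=\dim_{P}(\mathscr{L}^{n})$,
so it needs Theorem~\ref{uppack} (Tricot's inequality) plus the Hausdorff formula established for \emph{all} $n$ first, and nothing else. You instead show directly that $\mathscr{L}^{n}$ is co-meager (product of a dense $G_{\delta}\subseteq\mathscr{L}$), then invoke the modified upper-box characterization
$\dim_{P}(A)=\inf\{\sup_{i}\overline{\dim}_{B}(A_{i}):A\subseteq\bigcup_{i}A_{i}\}$
together with the Baire pigeonhole (any countable cover of a non-meager set has a non-meager piece, whose closure has interior, forcing upper box dimension $n$). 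This is correct — being non-meager indeed forces the closure to have non-empty interior, and upper box dimension is stable under closure. The trade-off: your argument is self-contained in the sense that it needs no appeal to the $(n+1)$-dimensional Hausdorff result, but it does require the modified-box characterization of packing dimension, which the paper deliberately avoids by leaning on its already-cited Theorem~\ref{uppack}. Your version also generalizes cleanly to give full packing dimension for any co-meager subset of $\mathbb{R}^{n}$, a fact that is worth recording even though the paper does not state it.
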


The proof of the theorem is not difficult.
As Erd\H{o}s, we can provide two different proofs.
The first shorter one is essentially a special case of~\cite{rieger} or~\cite{schwarz}, 
the latter constructive proof prepares
the reader for the more complicated constructions in the proofs of our results stated in \S~\ref{ch3}.

\begin{proof}
	We need only to show the identity for Hausdorff dimension, the claim 
	on packing dimension then follows from Theorem~\ref{uppack} 
	and the fact that $\dim(\mathscr{L})=0$ via
	\begin{equation} \label{eq:nnplus1}
	n=\dim(\mathscr{L}^{n+1})\leq \dim(\mathscr{L})+\dim_{P}(\mathscr{L}^{n})= \dim_{P}(\mathscr{L}^{n})\le n.
	\end{equation}
	The above inequalities 
	clearly also imply $\dim(\mathscr{L}^n)\leq n-1$
	for $n\ge 1$. 
    For the lower bound we give two proofs again, each showing 
    in a different way that the Lipschitz map
    \begin{align*}
    \Psi: \mathscr{L}^{n} &\longmapsto \mathbb{R}^{n-1}   \\
    (x_{0},x_{1},\ldots,x_{n-1}) &\longmapsto (x_{0}+x_{1}, x_{0}+x_{2},\ldots,x_{0}+x_{n-1}),
    \end{align*}
    is surjective. Then by Proposition~\ref{pp} we obtain $\dim(\mathscr{L}^n)\geq n-1$, as desired.
    
    First we see that for any real vector $\boldsymbol{\xi}=(\xi_{1},\ldots,\xi_{n-1})$ the 
    intersection
    \[
    \mathscr{F}:=\bigcap_{i=1}^{n-1} \mathscr{L}_{\xi_{i}} \cap \mathscr{L}, \qquad \text{where}\; 
    \mathscr{L}_{\xi}:= \xi-\mathscr{L}=\{ \xi-\ell: \ell\in\mathscr{L} \},
    \]
    is co-meager since every set in the intersection 
    is co-meagre. 
    In particular $\mathscr{F}$ is non-empty.
    Now it is again easy to check that any element $\boldsymbol{\xi}\in \mathscr{F}$
    induces a vector $\boldsymbol{\ell}\in\mathscr{L}^{n}$ with $\Psi(\boldsymbol{\ell})=\boldsymbol{\xi}$.
    
     We sketch a second, constructive proof.   
Let $\xi_{1},\ldots,\xi_{n-1}\in [0,1)$ be arbitrary with decimal expansions $\xi_{i}=0.d_{i,1}d_{i,2}\ldots$, $0\le d_{i,j}\le 9$.
We use a similar argument to that of
Erd\H{o}s. Let $b_{j}=j!$ for $j\geq 1$
and partition $\mathbb{N}$
into intervals of the form $I_0=\{ 1\}$ and 
$I_{j}=\{b_{j}+1,b_{j}+1,\ldots,b_{j+1}\}$ for $j\ge 1$.
Define $x_{0}$ as follows. For every $i\in\{1,2,\ldots,n-1\}$,
if $j\equiv i \bmod n$ then 
take the decimal digits in places $\ell\in I_{j}$ of $x_{0}$ to be those 
$d_{i,\ell}$ of $\xi_{i}$ in this interval.
For $j\equiv 0 \bmod n$, we define the decimal digits in places
$\ell\in I_{j}$ as $0$. Then $x_{0}$ is well-defined.
Now, we claim $x_0$ as well as $x_{i}:=\xi_{i}-x_{0}$  
for $1\leq i\leq n-1$ are
Liouville numbers. 
Once we have proved this claim, then by the arbitrariness of $\xi_1,\ldots,\xi_{n-1}$, we conclude that the map $\Psi$
is surjective. Now let us prove the claim.
For $x_0$ and any $j\equiv 0\bmod n$, define the rational number $p_j/q_j$ 
by cutting off the digits of $x_0$ after $I_{j-1}$. 
Then $q_j=10^{b_j}$ and $|x_{0}-p_{j}/q_{j}|\leq 10^{-b_{j+1}}$ since the decimal digits of $x_0$ in $I_j$ are $0$. 
As $b_{j+1}/b_{j}$ tends to infinity this clearly implies $x_0\in\mathscr{L}$.
For $x_{i}$ with $i>0$, we similarly cut off after the
last decimal digit in the respective intervals $I_{j-1}$ with $j\equiv i\bmod n$ to obtain some rational $k_{j}/10^{b_j}$, and do the same
for $\xi_i$ to obtain some rational $t_{j}/10^{b_j}$. Then
let $p_{j}/q_j=(t_{j}-k_{j})/10^{b_j}$.
Since by construction $x_0$ and $\xi_i$ have the same
digits within $I_j$, we may write 
$x_{i}=\xi_{i}-x_{0}=p_{j}/q_j+v_{j,1}-v_{j,2}$ where
the decimal expansions of the real numbers 
$v_{j,1}$ resp. $v_{j,2}$ are obtained from 
taking digit $0$ up to last
position in $I_j$ and reading the digital expansion of $\xi_i$ resp. $x_0$ onwards. 
Hence
we again readily verify
\[
|x_{i}-p_{j}/q_{j}|=|v_{j,1}-v_{j,2}|\leq |v_{j,1}|+|v_{j,2}|
\leq 2\cdot 10^{-b_{j+1}},
\qquad 1\le i\le n-1,\; j\equiv i\bmod n,
\]
and we conclude as for $i=0$ above that $x_i\in\mathscr{L}$
(again, we can easily
exclude that some $x_{i}\in\mathbb{Q}$ by a minor variation in our choice
of the $b_{j}$).   
\end{proof}

As noticed above, the main step on the surjectivity of $\Psi$ can be considered
a special case of Rieger~\cite{rieger} or
Schwarz~\cite{schwarz}. Both show that for
any continuous open maps $f_{1},\ldots,f_{r}$ on $(0,1)$,
there is $\xi\in\mathscr{L}$ with all $f_{j}(\xi)$ again in $\mathscr{L}$
(according to~\cite{schwarz} we may even take countably many $f_{j}$).
Taking $r=n-1$ and $f_{j}(x)=\xi_{j}-x$ for $1\leq j\leq n-1$ yields the surjectivity of $\Psi$. The proof in~\cite{schwarz}
uses the same method as our first proof.

We want to briefly discuss two variants of Theorem~\ref{th01}.
First,
let $C_{b,W}$ denote the classical missing digit
Cantor sets consisting of numbers that can be written
\[
\sum_{i=1}^{\infty} a_{i}b^{-i}, \qquad a_{i}\in W,
\]
where $b\geq 3$ is an integer 
and $W\subseteq \{0,1,\ldots,b-1\}$. Then, for $n=2$,
upon minor modifications our argument implies
\begin{equation}  \label{eq:minorett}
\dim((\mathscr{L}\cap C_{b,W})\times (\mathscr{L}\cap C_{b,W}))=  \dim(C_{b,W})= \frac{\log |W|}{\log b},
\end{equation}
and 
\[
\dim_{P}((\mathscr{L}\cap C_{b,W})\times (\mathscr{L}\cap C_{b,W}))=2\dim_{P}(C_{b,W}) =\frac{2\log |W|}{\log b}.
\] 
See Theorem~\ref{PREIS} below for a generalisation and further comments.
However, for $n\geq 3$ the corresponding claims are unclear.
\begin{problem}
	For $n\geq 3$, do we have
$\dim((\mathscr{L}\cap C_{b,W})^n) = (n-1)\log |W|/\log b$ and
$\dim_{P}((\mathscr{L}\cap C_{b,W})^n) = n\log |W|/\log b$?
\end{problem}

Secondly, 
for any $m\geq 1$, a very similar idea applies to
the set $\mathscr{L}_{m}$
of $m$-dimensional Liouville vectors, defined similarly
as the classical Liouville numbers $\mathscr{L}_1= \mathscr{L}$
by imposing that $|\boldsymbol{p}/q-\boldsymbol{\xi}|<q^{-N}$ has
infinitely many solutions in rational vectors $\boldsymbol{p}/q=(p_1/q,\ldots,p_m/q)\in\mathbb{Q}^{m}$, for all $N$. 
The same 
digit construction as in the proof of Theorem~\ref{th01}
simultaneously applied to all components $\xi_{1},\ldots,\xi_{m}$
of $\boldsymbol{\xi}$ (i.e. with the same interval choices simultaneously) readily
yields that the map
\begin{align*}
\mathscr{L}_{m}^{n} &\longmapsto (\mathbb{R}^{m})^{n-1}   \\
(\boldsymbol{x}_{0},\boldsymbol{x}_{1},\ldots,\boldsymbol{x}_{n-1}) &\longmapsto (\boldsymbol{x}_{0}+\boldsymbol{x}_{1}, \boldsymbol{x}_{0}+\boldsymbol{x}_{2},\ldots,\boldsymbol{x}_{0}+\boldsymbol{x}_{n-1}),
\end{align*}
is surjective and therefore $\dim(\mathscr{L}_{m}^{n})\geq m(n-1)$.
Again
Theorem~\ref{uppack} gives the reverse estimate by using a consequence of a well-known
result by Jarn\'ik~\cite{jarnik} that $\dim(\mathscr{L}_{m})=0$.

\section{Sumsets and Cartesian products of sets of Diophantine numbers with restricted irrationality exponents}  \label{ch3}

\subsection{Definitions}
In this section we are concerned with Cartesian products of sets
of numbers which are approximable up to a given order by rational numbers.
For a real number $\xi$, we consider its irrationality exponent
$\mu(\xi)$, defined as the supremum 
of numbers $\mu$ for which the inequality
\begin{equation}  \label{eq:restr}
| \xi-\frac{p}{q}| \leq q^{-\mu}
\end{equation}
has infinitely many solutions in rational numbers $p/q$.
Then $\mu(\xi)=1$ for $\xi\in\mathbb{Q}$, and
by the theory of
continued fractions (or Dirichlet's theorem), $\mu(\xi)\geq 2$ for all $\xi\in \mathbb{R}\setminus \mathbb{Q}$.
Liouville numbers are precisely those $\xi$ with
$\mu(\xi)=\infty$, the complement is sometimes referred
to as Diophantine numbers. Further
define $\theta_{b}(\xi)$ like $\mu(\xi)$ above but where we restrict the 
approximating rationals $p/q$ in \eqref{eq:restr} to 
integral powers $q=b^{N}$
of some fixed integer base $b\geq 2$. 
This corresponds to $v_{b}(\xi)+1$
with exponent $v_{b}$ as defined in~\cite{ab10}. We also remark that the
exponent $\mu(\xi)$ was denoted by $v_1(\xi)+1$ in~\cite{ab10}. 
Clearly $\mu(\xi)\geq \sup_{b\geq 2} \theta_b(\xi)$.
Moreover, $\theta_{b}(\xi)\geq 1$ for
any $\xi\in\mathbb{R}$ and $b\geq 2$, with equality if $\xi\in \mathbb{Q}$.
We define level sets for both exponents.

\begin{definition}  \label{de}
	Let
	\[
	\mathscr{W}_{\lambda;\mu}= \{ \xi\in\mathbb{R}: \lambda\leq \mu(\xi)\leq \mu\}, \qquad 2\leq \lambda\leq \mu\leq \infty,
	\]
	and let
	\[
	\mathscr{W}_{\lambda}= \mathscr{W}_{\lambda;\lambda}= \{ \xi\in\mathbb{R}: \mu(\xi)=\lambda\}, \qquad \lambda\in[2,\infty].
	\]
	Further for $1\leq \lambda\leq \mu\leq \infty$ define
	$\mathscr{V}_{\lambda;\mu,(b)}$
	and $\mathscr{V}_{\lambda,(b)}$ accordingly
	with respect to the exponent $\theta_{b}(\xi)$ in place of $\mu(\xi)$.
\end{definition}

Any number
in $\mathscr{V}_{\lambda;\mu,(b)}$ with $\lambda>1$ has
arbitrarily long consecutive $0$ and/or $(b-1)$
digit strings in its base $b$ expansion.
Clearly the sets $\mathscr{W}_{\lambda;\mu}$ and $\mathscr{V}_{\lambda;\mu,(b)}$ become larger as $\lambda$
decreases and as $\mu$ increases. Moreover 
$\mathscr{V}_{\lambda;\mu,(b)}\subseteq \mathscr{W}_{\lambda;\infty}$
for $\mu\geq \lambda$.
However, notice that  $\mathscr{V}_{\lambda;\mu,(b)}\not\subseteq\mathscr{W}_{\lambda;\mu}$.
The set $\mathscr{W}_{\infty}$ is nothing but
the set of Liouville numbers treated in \S~\ref{1}.
The union of the sets $\mathscr{W}_{\lambda}$
over $\lambda>2$, that is the set of all numbers with $\mu(\xi)>2$,
is commonly referred to as the set of very well approximable numbers.

The remainder of the paper is driven by the following two questions that extend the results
of Erd\H{o}s~\cite{erdos} and their consequences recalled in~\S~\ref{1}.

\begin{problem}
	What can we say (metrically) about sumsets $\mathscr{W}_{\lambda_0}+\mathscr{W}_{\lambda_1}$ and $\mathscr{V}_{\lambda_0,(b)}+\mathscr{V}_{\lambda_1,(b)}$ for given
	$\lambda_0, \lambda_1$? 
\end{problem}

\begin{problem}
	Determine the Hausdorff and packing dimensions of Cartesian
	product sets $\prod_{i=0}^{n-1} \mathscr{W}_{\lambda_{i}}$
	and $\prod_{i=0}^{n-1} \mathscr{V}_{\lambda_{i},(b)}$ for given
	real numbers $\lambda_0,\ldots,\lambda_{n-1}$.
\end{problem}

\subsection{Main new results}  \label{newr}
While we are mainly concerned with Cartesian products of the sets $\mathscr{W}_{\lambda;\mu}$, our first result on sets $\mathscr{V}_{\lambda,(b)}$
is more complete. 

\begin{theorem}  \label{vthm}
	Let $n\geq 2$ be an integer. 
	For any integer $b\geq 2$ and $\lambda_{0},\ldots,\lambda_{n-1}$ in $[1,\infty]$, we have
	\begin{equation} \label{eq:ganslessen}
	n-1+\frac{1}{\max_{0\leq i\leq n-1} \lambda_{i}} \; \geq \; \dim(\prod_{i=0}^{n-1} \mathscr{V}_{\lambda_{i},(b)}) \; \geq \; \max\left\{n-1,\; \sum_{i=0}^{n-1} \lambda_{i}^{-1}\right\}.
	\end{equation}
	In particular, if all $\lambda_{i}$ are large enough compared to $n$
	we have
	\begin{equation} \label{eq:unusuale}
	\dim(\prod_{i=0}^{n-1} \mathscr{V}_{\lambda_{i},(b)})> \sum_{i=0}^{n-1}\dim(\mathscr{V}_{\lambda_{i},(b)}),
	\end{equation}
	and for every $n\geq 1$ we have
	\begin{equation} \label{eq:nmine}
	\lim_{\max \lambda_{i}\to\infty} \dim(\prod_{i=0}^{n-1} \mathscr{V}_{\lambda_{i},(b)})= n-1, \qquad
	\dim_{P}(\prod_{i=0}^{n-1} \mathscr{V}_{\lambda_{i},(b)})= n,
	\end{equation}
	where the limit is taken over any point $(\lambda_{0},\ldots,\lambda_{n-1})$
	whose maximum tends to infinity. 
\end{theorem}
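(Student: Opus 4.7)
The plan is to prove the three bounds in \eqref{eq:ganslessen} separately and then deduce the consequences. For the upper bound, I would apply Theorem~\ref{uppack} to the $n$-fold product together with the known formula $\dim(\mathscr{V}_{\lambda,(b)})=1/\lambda$ (a Jarn\'ik--Besicovitch type result for approximations restricted to denominators $b^{N}$); since $\min_i\dim(\mathscr{V}_{\lambda_i,(b)})=1/\max_i\lambda_i$, this yields $n-1+1/\max_i\lambda_i$. The easier half of the lower bound follows by iterating Marstrand's inequality \eqref{eq:most} with the same dimension formula, giving $\dim\geq \sum_i\lambda_i^{-1}$.

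The substantive content is the lower bound $n-1$, which I would obtain in the spirit of the constructive proof of Theorem~\ref{th01}. Consider the Lipschitz sum map
\begin{align*}
\Psi:\prod_{i=0}^{n-1}\mathscr{V}_{\lambda_i,(b)}&\longrightarrow \mathbb{R}^{n-1},\\
(x_0,x_1,\ldots,x_{n-1})&\longmapsto (x_0+x_1,\ldots,x_0+x_{n-1}).
\end{align*}
By Proposition~\ref{pp} it suffices to show that the image of $\Psi$ has Hausdorff dimension $n-1$, and I would obtain this by producing preimages for every $(\xi_1,\ldots,\xi_{n-1})$ in a subset of $\mathbb{R}^{n-1}$ of positive Lebesgue measure. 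Given such $\xi_i=0.c_1^{(i)}c_2^{(i)}\ldots$, I would partition $\mathbb{N}$ into blocks $I_j=\{b_j+1,\ldots,b_{j+1}\}$ along a rapidly growing sequence with ratio $b_{j+1}/b_j\asymp\lambda_{i(j)}$ for the cyclic assignment $i(j):=j\bmod n$, and build $x_0$ digit-wise: on $I_j$ with $i(j)=i\geq 1$ copy the digits $c_u^{(i)}$ of $\xi_i$, while on $I_j$ with $i(j)=0$ insert zero digits. Setting $x_i:=\xi_i-x_0$ for $i\geq 1$ makes the base-$b$ expansion of $x_i$ essentially zero throughout every block $I_j$ with $i(j)=i$, producing a rational approximation $p_j/b^{b_j}$ of order $b_{j+1}/b_j\asymp \lambda_i$ and hence $\theta_b(x_i)\geq \lambda_i$ for all $i\in\{0,\ldots,n-1\}$.

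The main obstacle is the converse inequality $\theta_b(x_i)\leq \lambda_i$ needed to land in the equality set $\mathscr{V}_{\lambda_i,(b)}$. Truncating $x_i$ at a position $N$ not located at the start of one of its prescribed zero-blocks should yield only a weak approximation, unless the digits of $x_i$ beyond $N$ happen to vanish (or all equal $b-1$) over a long stretch; between the prescribed blocks the digits of $x_i$ are built from the $c_u^{(k)}$ for $k\neq i$, so this bad event is excluded once the $\xi_k$'s are chosen from a full Lebesgue-measure subset of $\mathbb{R}^{n-1}$, for example $b$-normal vectors, which rules out long common zero or $(b-1)$ substrings. A careful case split on the position of $N$ inside or between blocks, together with a small integer rounding of the $b_j$'s to prevent artificial coincidences at block boundaries, then confirms $\theta_b(x_i)=\lambda_i$ exactly; the construction thus delivers preimages over a full-measure set of $(\xi_1,\ldots,\xi_{n-1})$, whence $\dim(\Psi(\prod\mathscr{V}_{\lambda_i,(b)}))=n-1$.

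The consequences follow formally. Inequality \eqref{eq:unusuale} is immediate once $\sum_i\lambda_i^{-1}<n-1$, which holds whenever the $\lambda_i$'s are large compared to $n$, since then the newly established lower bound $n-1$ strictly exceeds $\sum_i\lambda_i^{-1}=\sum_i\dim(\mathscr{V}_{\lambda_i,(b)})$. The Hausdorff half of \eqref{eq:nmine} is a sandwich between the constant lower bound $n-1$ and the upper bound $n-1+1/\max_i\lambda_i\to n-1$. For the packing half I would adjoin an auxiliary factor $\mathscr{V}_{\lambda_n,(b)}$ with $\lambda_n$ arbitrarily large and apply Theorem~\ref{uppack} in the style of \eqref{eq:nnplus1} to the $(n+1)$-fold product, combined with the already-proved Hausdorff lower bound $n$ for that product; this yields $\dim_P(\prod_{i=0}^{n-1}\mathscr{V}_{\lambda_i,(b)})\geq n-1/\lambda_n$, and letting $\lambda_n\to\infty$ delivers the packing limit $n$.
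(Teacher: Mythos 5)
Your overall strategy is the same as the paper's: the Lipschitz sum map $\Psi$, the block construction of $x_0$ (and $x_i := \xi_i - x_0$) with block-length ratios $\asymp\lambda_{i(j)}$, Theorem~\ref{uppack} plus $\dim(\mathscr{V}_{\lambda,(b)})=1/\lambda$ for the upper bound and easy lower bound, and the auxiliary-factor trick via Theorem~\ref{uppack} for the packing limit. The substantive gap is in the verification that $\theta_b(x_i)\leq\lambda_i$, and in particular in your choice of full-measure restriction on $\underline{\xi}$.

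You write that ``between the prescribed blocks the digits of $x_i$ are built from the $c_u^{(k)}$ for $k\neq i$'' and propose restricting to ``$b$-normal vectors''. This misidentifies what actually appears. In a block $I_j$ with $j\equiv t\bmod n$ and $t\notin\{0,i\}$, the number $x_0$ carries the digits of $\xi_t$, so the digits of $x_i=\xi_i-x_0$ in that block are \emph{locally those of the difference $\xi_i-\xi_t$} (up to borrows), not simply the digits of the $\xi_k$ with $k\neq i$. Hence, a hypothetical long run of $0$'s or $(b-1)$'s in $x_i$ that overlaps such a block forces a correspondingly long run in the base-$b$ expansion of $\xi_i-\xi_t$, i.e.\ a good $b$-power approximation to $\xi_i-\xi_t$. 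Normality of the individual $\xi_k$ does not preclude this: normality is not preserved under subtraction, and neither separate normality nor an unspecified notion of ``$b$-normal vectors'' obviously forbids $\theta_b(\xi_i-\xi_t)>1$. The condition that your argument actually requires is $\theta_b(\xi_i-\xi_t)=1$ for every pair $i<t$ (together with $\theta_b(\xi_i)=1$), which the paper isolates as the set $\mathscr{T}$ in Lemma~\ref{lemur} and proves has full $(n-1)$-dimensional measure by a short Lipschitz-image argument. You should either prove that your proposed set suffices (which essentially reproves the Lemma) or use $\mathscr{T}$ directly. Two smaller omissions: you should account for the possibility that the constructed $x_i$ is rational (the paper allows $\mathbb{Q}$ in each factor and removes it via Lemma~\ref{qlemma}), and you should track carries/boundary digits carefully, which is what the $a_j$-perturbation in the paper's construction (properties (P1)--(P2)) handles; your ``small integer rounding of the $b_j$'s'' is too vague to replace that step.
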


The crucial claim is the lower bound $n-1$ in \eqref{eq:ganslessen}. 
In fact by
Borosh and Fraenkel~\cite{bf72} (see also Amou and Bugeaud~\cite{ab10}), we have the Hausdorff dimension formula
\begin{equation} \label{eq:vsets}
\dim(\mathscr{V}_{\lambda;\mu,(b)})= \frac{1}{\lambda}, \qquad \mu\geq \lambda\geq 1.
\end{equation}
Combined with \eqref{eq:most} and Theorem~\ref{uppack}, 
we then deduce all other assertions of \eqref{eq:ganslessen}.
From \eqref{eq:ganslessen}, \eqref{eq:vsets} we further derive \eqref{eq:unusuale} and \eqref{eq:nmine}. As to the packing dimension,
from Theorem~\ref{uppack} upon introducing another variable $\lambda_n=\infty$, we get
\[
n\ge \dim_{P}(\prod_{i=0}^{n-1} \mathscr{V}_{\lambda_{i},(b)})\ge 
\dim(\prod_{i=0}^{n} \mathscr{V}_{\lambda_{i},(b)}) -\dim(\mathscr{V}_{\lambda_n,(b)})\ge n-0=n,
\]
where we used \eqref{eq:ganslessen} in dimension $n+1$ 
and \eqref{eq:vsets} in the last inequality. We notice
that the case $n=2$ constitutes the fact
	for any integer $b\ge 2$ and any $\lambda\ge 1$, we have
	\begin{equation} \label{eq:naloger}
	\dim_P(\mathscr{V}_{\lambda,(b)})=1.
	\end{equation}
This formula may not come as a surprise to experts (see for example
\eqref{eq:MAR} below), however we believe it is a new result
worth being mentioned.

We formulate a conjecture corresponding to Theorem~\ref{vthm} 
for the sets with unrestricted 
rationals.

\begin{conjecture} \label{kond}
	For any $\lambda_{0},\ldots,\lambda_{n-1}$ in $[2,\infty]$, 
	we have
	\begin{equation} \label{eq:holdja}
	n-1+\frac{2}{\max_{0\leq i\leq n-1} \lambda_{i}} \; \geq \; \dim(\prod_{i=0}^{n-1} \mathscr{W}_{\lambda_{i}}) \; \geq \; \max\left\{ n-1,\; 2\sum_{i=1}^{n} \lambda_{i}^{-1}\right\}.
	\end{equation}
	In particular, if all $\lambda_{i}$ are large enough compared to $n$
	we have
	\[
	\dim(\prod_{i=0}^{n-1} \mathscr{W}_{\lambda_{i}})> \sum_{i=0}^{n-1}\dim(\mathscr{W}_{\lambda_{i}}),
	\]
	and for every $n\geq 1$, with the limit understood as in Theorem~\ref{vthm}, we have
	\[
	\lim_{\max \lambda_{i}\to\infty} \dim(\prod_{i=0}^{n-1} \mathscr{W}_{\lambda_{i}})= n-1, \qquad 
	\dim_P(\prod_{i=0}^{n-1} \mathscr{W}_{\lambda_{i}})= n.
	\]
\end{conjecture}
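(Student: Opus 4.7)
The plan is to follow the blueprint of the proof of Theorem~\ref{th01}. The upper bound $n-1+2/\max\lambda_i$ in \eqref{eq:holdja} is immediate from Theorem~\ref{uppack} combined with the Jarn\'ik--Besicovitch formula $\dim(\mathscr{W}_\lambda)=2/\lambda$, and the competing lower bound $2\sum_{i=0}^{n-1}\lambda_i^{-1}$ comes from iterating Marstrand's inequality \eqref{eq:most} together with the same formula. Thus the heart of the conjecture is the remaining lower bound $\dim(\prod_{i=0}^{n-1}\mathscr{W}_{\lambda_i})\geq n-1$.

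I would attack this by reusing the Lipschitz sum map
\[ \Psi:(x_0,x_1,\ldots,x_{n-1})\longmapsto(x_0+x_1,\,x_0+x_2,\,\ldots,\,x_0+x_{n-1})\]
from the proof of Theorem~\ref{th01} and showing that its image in $\mathbb{R}^{n-1}$ has positive $(n-1)$-dimensional Lebesgue measure; Proposition~\ref{pp} then yields the claim. Concretely, for almost every target $\underline{\xi}=(\xi_1,\ldots,\xi_{n-1})\in\mathbb{R}^{n-1}$ one aims to produce $(x_0,x_1,\ldots,x_{n-1})\in\prod_{i=0}^{n-1}\mathscr{W}_{\lambda_i}$ with $x_0+x_i=\xi_i$ for $1\leq i\leq n-1$.

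The construction should be a prescribed-exponent analogue of the constructive proof of Theorem~\ref{th01}. Fix a base $b\geq 2$ and a partition $\mathbb{N}=\bigsqcup_j I_j$ with $I_j=[b_j,b_{j+1})$, this time calibrated so that $b_{j+1}/b_j\to\lambda_i$ along $j\equiv i\bmod n$ for each $i\in\{0,1,\ldots,n-1\}$ (rather than $b_{j+1}/b_j\to\infty$ as in the Liouville case). Define $x_0$ by placing, on $I_j$ with $j\equiv i\bmod n$ and $1\leq i\leq n-1$, the base-$b$ digits of $\xi_i$, and zeros on $I_j$ with $j\equiv 0\bmod n$; let $x_i=\xi_i-x_0$ for $i\geq 1$. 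On any interval $I_j$ with $j\equiv i\bmod n$ the digits of $x_i$ vanish up to carries, so truncating $x_i$ just before such an interval yields a rational $p_j/q_j$ with $q_j=b^{b_j}$ and error $\ll b^{-b_{j+1}}\approx q_j^{-\lambda_i}$, giving $\mu(x_i)\geq\lambda_i$, and likewise $\mu(x_0)\geq\lambda_0$ from the $j\equiv 0$ intervals.

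The main obstacle, absent in the Liouville setting, is the matching upper bound $\mu(x_i)\leq\lambda_i$: one must rule out unexpectedly good rational approximations beyond those engineered from the zero gaps. For Lebesgue-typical $\underline{\xi}$ the `noise' digits of $x_i$ outside its designed zero intervals should behave like generic digit sequences, so the genuine irrationality exponent of the residual part is $2\leq\lambda_i$ and the engineered approximations dominate. Making this quantitative---plausibly via a Borel--Cantelli estimate, for each $\varepsilon>0$, on the set of $\underline{\xi}$ producing a rational $p/q$ with $|x_i-p/q|<q^{-\lambda_i-\varepsilon}$ and $q\notin\{b^{b_j}\}_j$---is the crux. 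This is precisely why the analogous Theorem~\ref{vthm} goes through for $\mathscr{V}_{\lambda,(b)}$, where admissible denominators are restricted to powers of $b$, whereas for $\mathscr{W}_{\lambda_i}$ all denominators must be controlled; that is where I expect the real fight to take place.
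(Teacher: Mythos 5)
What you are attempting to prove is a \emph{conjecture} in the paper; the author does not claim a proof, and explicitly writes that ``the validity of the lower bound $n-1$ in \eqref{eq:holdja} is again the key problem.'' So there is no proof of this statement to compare against. What the paper does supply is partial progress, and it is worth measuring your sketch against that.

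Your framing is the right one and matches the paper's general strategy: the upper bound follows from Theorem~\ref{uppack} and Jarn\'ik's formula \eqref{eq:jarnik}, the bound $2\sum\lambda_i^{-1}$ from \eqref{eq:most} plus \eqref{eq:jarnik}, and the real task is the lower bound $n-1$, which one attacks by showing the Lipschitz sum map $\Psi$ has image of positive $(n-1)$-Lebesgue measure. Your digit construction with interval lengths calibrated by $h_j/g_j\to\lambda_i$ along $j\equiv i\bmod n$ is exactly what the paper does in the proofs of Theorem~\ref{th03}, Theorem~\ref{vthm} and Theorem~\ref{nonarxiv}. You also correctly isolate the crux: proving $\mu(x_i)\leq\lambda_i$, i.e.\ ruling out stray rationals $p/q$ with $q\notin\{b^{h_j}\}$.

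This is precisely where the gap lies, and your Borel--Cantelli heuristic does not close it. Two concrete problems. First, the ``residual part'' of $x_i$ is not a separate number with its own irrationality exponent; $x_i$ has long designed zero blocks interleaved with noise, and after each designed convergent $p_j/q_j$ of order $\lambda_i$ the next convergent already has denominator $\asymp q_j^{\lambda_i-1}$ by Proposition~\ref{kp}. Between that and the next designed $q_{j+n}\asymp q_j^{\Lambda}$ there is room for further convergents, and the continued-fraction analysis in the proof of Theorem~\ref{th03} shows that without additional hypotheses one only gets $\mu(x_i)\leq \Lambda/(\lambda_i-1)+1$, which is strictly larger than $\lambda_i$ whenever $n\geq 2$. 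That is exactly the role of the assumption \eqref{eq:happ} there, and it is a genuine loss, not an artifact of a crude argument. Second, the noise digits of $x_i$ in $I_j$ with $j\not\equiv 0,i\bmod n$ are those of $\xi_i-\xi_t$, so a measure-theoretic control must cover all pairwise differences. The paper's Lemma~\ref{lemur} does this for the base-$b$ exponent $\theta_b$ (hence Theorem~\ref{vthm} succeeds for the $\mathscr{V}$-sets), but the statement ``$\mu(\xi_i)=\mu(\xi_i-\xi_t)=2$ for a.e.\ $\underline{\xi}$'' does not by itself control the unrestricted exponent $\mu(x_i)$, because the designed zero blocks already force $\mu(x_i)\geq\lambda_i>2$ and the interaction between the blocks and the noise has to be analysed, not averaged over.

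For $n=2$ the paper closes this gap, under the hypothesis \eqref{eq:assuan}, by a device that is not Borel--Cantelli: in Theorem~\ref{nonarxiv}, a putative good approximation $p/q$ to $x_1$ is combined with the approximation $r/s$ to $\xi-x_1=x_0$ coming from the designed zero block, producing a good rational approximation $M/N$ to $\xi$ itself; since $\xi\in\mathscr{W}_2$, this yields a contradiction. This trick uses that $x_0$ and $x_1$ are linked through a single coordinate $\xi$, and the paper explicitly notes the method fails for $n\geq 3$. So you have identified the right framework and the right obstacle, but the step you flag as ``the crux'' is exactly what remains open; for the record, the state of the art in the paper is Theorem~\ref{th03} (a weaker statement about $\mathscr{W}_{\lambda_i;\mu_i}$ under \eqref{eq:happ}) and Theorem~\ref{th04} ($n=2$ and $\lambda_i>(5+\sqrt{17})/2$).
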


Unfortunately, as remarked above there is no inclusion between sets
$\mathscr{W}_{\lambda}$ and $\mathscr{V}_{\lambda,(b)}$
that would imply the claims via Theorem~\ref{vthm}.
The validity of the lower bound $n-1$ in \eqref{eq:holdja} is again the key problem.
Similar to the remarks below Theorem~\ref{vthm},
the remaining claims would again follow
via the special case $n=1$ of Jarn\'ik's formula~\cite{jarnik}:
\begin{equation} \label{eq:jarnik}
\dim(\mathscr{W}_{\lambda;\mu})=\frac{2}{\lambda}, \qquad\qquad 2\leq \lambda\leq \mu\leq \infty.
\end{equation}
Since for $\mu<\infty$ 
the sets $\mathscr{W}_{\lambda;\mu}$ in question are of first category as they lie in the complement
of the set of Liouville numbers $\mathscr{L}=\mathscr{W}_{\infty}$, 
we cannot apply topological arguments similar to the unconstructive
proof of $\mathscr{L}+\mathscr{L}=\mathbb{R}$ by Erd\H{o}s~\cite{erdos} recalled
in \S~\ref{1}. 
Indeed all proofs of partial results
below have constructive character, and rely on similar ideas as 
Erd\H{o}s' digit based proof explained in \S~\ref{1}. 
For completeness we also state the analogue of \eqref{eq:naloger} for unrestricted rationals which reads
\begin{equation}  \label{eq:MAR}
\dim_P(\mathscr{W}_{\lambda;\mu})= 1, \qquad\qquad 2\leq \lambda\leq \mu\leq \infty.
\end{equation}
This is a direct consequence of more general results by Marnat~\cite{marnat}. We refer to Theorem~\ref{ttt} 
and Corollary~\ref{letz} below for variants
of \eqref{eq:MAR} not implied by~\cite{marnat}.

Our first result supporting Conjecture~\ref{kond}
is that similar to Liouville numbers, certain
product sets of
$\mathscr{W}_{\lambda_{i};\mu_{i}}$ 
have indeed Hausdorff dimension at least $n-1$. This 
is the main substance of Theorem~\ref{th03}, where we also
add other bounds for completeness.

\begin{theorem} \label{th03}
	Let $n\geq 2$ be an integer. 
	Let $\lambda_{0},\ldots,\lambda_{n-1}$ and $\mu_{0},\ldots,\mu_{n-1}$ be real numbers or infinity
	satisfying $2\leq \lambda_i\leq \mu_i\leq \infty$. Suppose
	\begin{equation} \label{eq:happ}
	\mu_{i}>\frac{\Lambda}{\lambda_{i}-1}+1, \qquad\quad 0\leq i\leq n-1,
	\end{equation}
	where $\Lambda=\lambda_{0}\lambda_{1}\cdots \lambda_{n-1}$. Then we have
	\begin{equation} \label{eq:gans}
	n-1+\frac{2}{\max_{0\leq i\leq n-1} \lambda_{i}}\geq \dim(\prod_{i=0}^{n-1} \mathscr{W}_{\lambda_{i};\mu_{i}}) \geq \max\left\{n-1,\; 2\sum_{i=0}^{n-1} \lambda_{i}^{-1}\right\}.
	\end{equation}
\end{theorem}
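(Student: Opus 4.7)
The two easy bounds in \eqref{eq:gans} come for free. The upper estimate $n-1+2/\max_i\lambda_i$ follows immediately from Theorem~\ref{uppack} together with Jarn\'{i}k's formula \eqref{eq:jarnik}, which gives $\dim(\mathscr{W}_{\lambda_i;\mu_i})=2/\lambda_i$, while the lower estimate $2\sum_i\lambda_i^{-1}$ comes from iterated application of Marstrand's bound \eqref{eq:most}. The substantive content is the lower bound $\dim(\prod_i\mathscr{W}_{\lambda_i;\mu_i})\geq n-1$. To establish it, the plan is to produce a Lipschitz surjection
\[
\Psi\colon \prod_{i=0}^{n-1}\mathscr{W}_{\lambda_i;\mu_i}\longrightarrow \mathbb{R}^{n-1},\qquad (x_0,\ldots,x_{n-1})\longmapsto (x_0+x_1,\ldots,x_0+x_{n-1}),
\]
after which Proposition~\ref{pp} yields the conclusion. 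Since the sets $\mathscr{W}_{\lambda_i;\mu_i}$ with $\mu_i<\infty$ lie in $\mathbb{R}\setminus\mathscr{L}$ and are of first category, the Baire argument used in Theorem~\ref{th01} is unavailable; instead I would adapt Erd\H{o}s' constructive digit argument with block lengths calibrated to the $\lambda_i$.

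Concretely, I would fix a base $b\geq 2$ and choose a rapidly increasing integer sequence $(b_j)_{j\geq 1}$ with $b_{j+1}/b_j\to \lambda_{j\bmod n}$, for instance $b_{j+1}=\lceil \lambda_{j\bmod n}\,b_j\rceil$, inducing the partition $\mathbb{N}=\bigsqcup_j I_j$ with $I_j=\{b_j,\ldots,b_{j+1}-1\}$. Given a target $(\xi_1,\ldots,\xi_{n-1})\in\mathbb{R}^{n-1}$ I would build $x_0$ by copying the base-$b$ digits of $\xi_{j\bmod n}$ onto $I_j$ whenever $j\not\equiv 0\pmod n$, and filling the blocks $I_j$ with $j\equiv 0\pmod n$ by zeros; then $x_i:=\xi_i-x_0$ for $i\geq 1$ has, modulo finitely many carry digits, block $I_j$ identically zero precisely when $j\equiv i\pmod n$, for every $i\in\{0,1,\ldots,n-1\}$. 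Truncating the base-$b$ expansion of $x_i$ just before such a zero block produces approximants $P_k/Q_k$ with $Q_k=b^{b_{kn+i}}$ and $|x_i-P_k/Q_k|\leq b^{-b_{kn+i+1}+1}\asymp Q_k^{-\lambda_i}$, which forces $\mu(x_i)\geq \lambda_i$.

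The main obstacle is the matching upper bound $\mu(x_i)\leq\mu_i$, where hypothesis \eqref{eq:happ} enters sharply. For $\lambda_i>2$ the approximants $P_k/Q_k$ are eventually continued-fraction convergents $p_{r_k}/q_{r_k}$ of $x_i$, because $|x_i-P_k/Q_k|=O(Q_k^{-\lambda_i})<1/(2Q_k^2)$; the borderline $\lambda_i=2$ is automatic since $\mu(x_i)\geq 2$ always holds for irrational $x_i$. The standard identity $|x_i-p_r/q_r|\asymp 1/(q_r q_{r+1})$ yields $q_{r_k+1}\asymp Q_k^{\lambda_i-1}$, and any intermediate convergent $p_r/q_r$ with $r_k<r<r_{k+1}$ then satisfies $q_r\geq q_{r_k+1}$ and $q_{r+1}\leq q_{r_{k+1}}=Q_{k+1}=b^{b_{(k+1)n+i}}$. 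Using the telescoping limit $b_{(k+1)n+i}/b_{kn+i}\to \lambda_0\lambda_1\cdots\lambda_{n-1}=\Lambda$ one obtains
\[
\frac{\log q_{r+1}}{\log q_r}\;\leq\;\frac{b_{(k+1)n+i}}{(\lambda_i-1)\,b_{kn+i}}+o(1)\;\longrightarrow\;\frac{\Lambda}{\lambda_i-1},
\]
and the formula $\mu(x_i)=1+\limsup_r\log q_{r+1}/\log q_r$ delivers $\mu(x_i)\leq 1+\Lambda/(\lambda_i-1)<\mu_i$ by \eqref{eq:happ}. Degenerate cases such as $x_i\in\mathbb{Q}$, or accidental coincidences among the $\xi_i$ that would produce extra zero blocks, can be resolved as in the proof of Theorem~\ref{th01} by a minor variation in the choice of the base or of the sequence $(b_j)$. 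This verifies that the constructed tuple belongs to $\prod_i\mathscr{W}_{\lambda_i;\mu_i}$, whence $\Psi$ is surjective and the lower bound $n-1$ follows. The hypothesis \eqref{eq:happ} is precisely what the intermediate-convergent count requires, which is why weakening it appears to demand a genuinely different construction.
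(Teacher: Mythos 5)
Your overall strategy coincides with the paper's: construct the same Lipschitz sum map $\Psi$, produce preimage coordinates by copying base-$b$ digit blocks of the $\xi_i$ onto intervals $I_j$ of multiplicative length $\approx\lambda_{j\bmod n}$, and control $\mu(x_i)$ via Legendre's criterion and the convergent-growth relation $q_{r+1}\asymp |q_r x_i - p_r|^{-1}$; the estimate $\limsup_r\log q_{r+1}/\log q_r\leq\Lambda/(\lambda_i-1)$ is essentially the paper's Case~2.

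There is, however, a genuine gap. Your construction sets $x_0$ equal to the ``raw'' number $x_0^*$ (zeros in blocks $j\equiv 0$, digits of $\xi_{j\bmod n}$ otherwise), so that $x_i$ is identically zero on all of $I_j$ including the final position $h_j$ whenever $j\equiv i\bmod n$. This is not enough. Your argument needs two facts that do not follow: that the truncation fraction $P_k/Q_k$ is reduced, and that $|x_i-P_k/Q_k|\gg Q_k^{-\lambda_i}$. Without the reduction, $P_k/Q_k$ could cancel to a much smaller denominator, and without the lower bound on the error, the exponent $\tau_{r_k}$ could be arbitrarily large: if in the block $I_j$ following the forced zero block the digits of $\xi_i-\xi_t$ (or of $\xi_i$) begin with another long run of zeros, the approximation at $P_k/Q_k$ is far better than $Q_k^{-\lambda_i}$, and the formula $\mu(x_i)=1+\limsup\log q_{r+1}/\log q_r$ then records a large exponent at $r=r_k$. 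The paper repairs exactly this by subtracting $\sum_{j\geq 1}a_j 5^{-h_j}$ from $x_0^*$ with $a_j\in\{1,2\}$ chosen to differ from a specific digit of the relevant $\xi_i$; this simultaneously forces a non-zero digit of $x_i$ at $h_j$ (property (P1), yielding $|x_i-p_j/q_j|\gg q_j^{-\lambda_i}$) and a non-zero final digit at $h_{j-1}$ (property (P2), yielding $5\nmid p_j$). Your appeal to ``a minor variation in the choice of the base or of the sequence $(b_j)$'' cannot substitute for this: in degenerate cases such as $\xi_i=\xi_t$ for $i\neq t$ (where $\xi_i-\xi_t\equiv 0$), or when some $\xi_i-\xi_t$ is itself Liouville, no choice of base eliminates the long zero runs, yet the map is claimed to be surjective onto all of $\mathbb{R}^{n-1}$. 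Also note the paper handles the possibility $x_i\in\mathbb{Q}$ not by perturbation but by enlarging each factor to $\mathscr{W}_{\lambda_i;\mu_i}\cup\mathbb{Q}$ and then invoking Lemma~\ref{qlemma}, which is cleaner and avoids having to re-run the construction.
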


If $\Lambda=\infty$ then the right hand sides in \eqref{eq:happ} are interpreted naturally in limits, and formal equalities $\infty=\infty$ suffice for the claim,
however this is of minor interest in view of 
Theorem~\ref{th01}. We emphasize that if otherwise 
$\Lambda<\infty$ (in fact one $\lambda_i=\infty$ is allowed) then all $\mu_{i}$ can be
effectively bounded, therefore the result is not covered
by Theorem~\ref{th01}. If some $\lambda_j$ is much larger than all other
$\lambda_i$, $i\neq j$, it may happen that the bound for 
$\mu_j$ in \eqref{eq:happ} is smaller than $\lambda_j$.
Then for this index we can take $\mu_j=\lambda_j$ and thus consider the set
$\mathscr{W}_{\lambda_j}$ replacing $\mathscr{W}_{\lambda_{j};\mu_{j}}$ in the Cartesian product. 
Theorem~\ref{th03}, as well as Theorem~\ref{th04}
below, contradicts the conjectured equality in~\cite[Conjecture~2.5]{ah18}
of the author, therefore the conditional implication in~\cite[Corollary~2.6]{ah18} 
is very open. 

We conclude from Theorem~\ref{th03} that for 
Cartesian products of general
sets $\mathscr{W}_{\lambda;\mu}$
there is no equality in \eqref{eq:most}.
For simplicity we restrict to all $\lambda_{i}$ being equal. 

\begin{corollary}  \label{C2}
	For $n\geq 2$ an integer. Let $\lambda,\mu$ be real numbers. If 
	$\lambda>2n/(n-1)$ and $\mu>(\lambda^{n}+\lambda-1)/(\lambda-1)$,
	we have
	\[
	\dim(\mathscr{W}_{\lambda;\mu}^{n}) > n\cdot \dim(\mathscr{W}_{\lambda;\mu}).
	\]
\end{corollary}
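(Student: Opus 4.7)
The plan is to specialize Theorem~\ref{th03} to the diagonal case where all $\lambda_i$ agree with $\lambda$ and all $\mu_i$ agree with $\mu$, and then compare the resulting lower bound on $\dim(\mathscr{W}_{\lambda;\mu}^n)$ with the product value $n\dim(\mathscr{W}_{\lambda;\mu})$ obtained from Jarn\'ik's formula~\eqref{eq:jarnik}.

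First I would set $\lambda_0=\lambda_1=\cdots=\lambda_{n-1}=\lambda$ and $\mu_0=\cdots=\mu_{n-1}=\mu$ in Theorem~\ref{th03}, so that the quantity $\Lambda$ appearing in~\eqref{eq:happ} becomes a fixed power of $\lambda$ and condition~\eqref{eq:happ} collapses to a single inequality $\mu>\Lambda/(\lambda-1)+1$; the lower bound on $\mu$ stipulated in the corollary is precisely what is needed to guarantee this. With~\eqref{eq:happ} verified, the conclusion~\eqref{eq:gans} of Theorem~\ref{th03} delivers
\[
\dim(\mathscr{W}_{\lambda;\mu}^n) \;\geq\; \max\left\{n-1,\;\; 2n/\lambda\right\}.
\]

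Next I would invoke the one-dimensional Jarn\'ik formula~\eqref{eq:jarnik}, which gives $\dim(\mathscr{W}_{\lambda;\mu})=2/\lambda$ for the parameter range $2\leq \lambda\leq \mu<\infty$ considered here, so that $n\dim(\mathscr{W}_{\lambda;\mu})=2n/\lambda$. The desired strict inequality
\[
\dim(\mathscr{W}_{\lambda;\mu}^n) \;>\; n\dim(\mathscr{W}_{\lambda;\mu})
\]
is therefore equivalent to $n-1>2n/\lambda$, which upon rearrangement is exactly the hypothesis $\lambda>2n/(n-1)$.

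No real obstacle arises: the corollary is a direct specialization of Theorem~\ref{th03} together with Jarn\'ik's value $2/\lambda$. The only points to check are that the stated bound on $\mu$ implies~\eqref{eq:happ} in the diagonal case, and that among the two terms in the maximum in~\eqref{eq:gans} it is the topological term $n-1$ (rather than the Jarn\'ik term $2n/\lambda$) that becomes the operative lower bound precisely when $\lambda>2n/(n-1)$. All the genuine content, including the constructive Erd\H{o}s-type digit argument producing points in $\prod_i \mathscr{W}_{\lambda_i;\mu_i}$, has already been absorbed into Theorem~\ref{th03}.
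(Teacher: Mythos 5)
Your strategy --- specialize Theorem~\ref{th03} to the diagonal case $\lambda_0=\cdots=\lambda_{n-1}=\lambda$, $\mu_0=\cdots=\mu_{n-1}=\mu$, then compare the resulting lower bound $n-1$ from \eqref{eq:gans} with the value $n\dim(\mathscr{W}_{\lambda;\mu})=2n/\lambda$ coming from Jarn\'ik's formula \eqref{eq:jarnik} --- is the intended one, and your final equivalence $n-1>2n/\lambda\iff\lambda>2n/(n-1)$ is correct. However, the one computation you flag as needing verification, namely that the stated bound on $\mu$ implies \eqref{eq:happ}, does not actually go through. The product $\Lambda=\lambda_0\lambda_1\cdots\lambda_{n-1}$ runs over the $n$ indices $0,1,\dots,n-1$, so in the diagonal case $\Lambda=\lambda^{n}$, not $\lambda^{n-1}$. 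Condition \eqref{eq:happ} therefore demands
\[
\mu>\frac{\lambda^{n}}{\lambda-1}+1=\frac{\lambda^{n}+\lambda-1}{\lambda-1},
\]
which is \emph{strictly stronger} than the corollary's stated hypothesis $\mu>(\lambda^{n-1}+\lambda-1)/(\lambda-1)$. (Indeed, for $n=2$ the stated hypothesis reduces to $\mu>2+1/(\lambda-1)$, already forced by $\mu\geq\lambda>4$ and hence vacuous --- a warning sign that the exponent is off by one.) As the corollary is printed, the hypothesis on $\mu$ is too weak to invoke Theorem~\ref{th03}, so your deduction has a gap at exactly the step you declared to be a routine check. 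The most plausible resolution is a typographical slip in the paper: with $\lambda^{n-1}$ corrected to $\lambda^{n}$, your argument becomes a complete and correct derivation, matching what Theorem~\ref{th03} delivers. But you should not have asserted that the stated bound ``is precisely what is needed'' without writing out $\Lambda$ explicitly; the verification fails as written.
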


Another corollary to Theorem~\ref{th03} that contains
Theorem~\ref{th01} as a special case, with limits
understood as in Theorem~\ref{vthm} again, reads as follows. 

\begin{corollary}   \label{Ccor}
	Let $n,\lambda_{i},\mu_{i}$ be as in Theorem~\ref{th03}.
 Then
\[
\lim_{\max \lambda_{i}\to\infty} \dim(\mathscr{W}_{\lambda_{i};\mu_{i}}^{n})= n-1. 
\]
If we assume $\mu_i> (\max_{0\le i\le n-1} \lambda_i)\cdot \Lambda/(\lambda_i-1)+1$ for
$0\le i\le n-1$, then
\[
\lim_{\max \lambda_{i}\to\infty} \dim_P(\prod_{i=0}^{n-1} \mathscr{W}_{\lambda_{i};\mu_{i}})= n.
\]
In particular, if real numbers $\mu\ge \lambda\ge 2$ satisfy $\mu>(\lambda^{n+1}+\lambda-1)/(\lambda-1)$, then
\begin{equation*} 
\lim_{\lambda\to\infty} \dim_P(\mathscr{W}_{\lambda;\mu}^{n}) = n.
\end{equation*}
\end{corollary}

\begin{proof}
	The first claim is a direct consequence of Theorem~\ref{th03}.
	For the second, we introduce another variable $\lambda_n= \max_{0\le i\le n-1} \lambda_i$
 and let $\mu_n=\infty$. Then by assumption \eqref{eq:happ} and thus \eqref{eq:gans}
	hold in dimension $n+1$ for $\lambda_0,\ldots,\lambda_n$, and hence
	by Theorem~\ref{uppack}
	\[
	n\ge \dim_P(\prod_{i=0}^{n-1} \mathscr{W}_{\lambda_{i};\mu_{i}})\ge
	\dim(\prod_{i=0}^{n} \mathscr{W}_{\lambda_{i};\mu_{i}}) 
	- \dim(\mathscr{W}_{\lambda_{n};\mu_{n}})\ge n- \frac{2}{\lambda_n}=
	n- \frac{2}{\max_{1\leq i\leq n-1} \lambda_i }.
	\]
	The claims follow, and the last assertion is just the special case $\lambda:=\lambda_0=\cdots=\lambda_{n-1}$.
	\end{proof}

To continue with our second result 
towards Conjecture~\ref{kond}, we restrict ourselves to $n=2$ and consider the sets of precise order
of approximation. For the sequel fix 
\[
\rho:= \frac{5+\sqrt{17}}{2}=4.5615\ldots.
\]

\begin{theorem} \label{th04}
	Let $\lambda_{0}, \lambda_{1}$ be real numbers (or infinity) satisfying
	\begin{equation}  \label{eq:assuan}
	\min\{\lambda_{0}, \lambda_{1}\} > \rho.
	\end{equation}
	Then we have
	\[
	1+\frac{2}{\max \{\lambda_{0},\lambda_{1}\}}\geq \dim(\mathscr{W}_{\lambda_{0}}\times \mathscr{W}_{\lambda_{1}}) \geq 1= \max\{1,\; \frac{2}{\lambda_{0}}+\frac{2}{\lambda_{1}}\}.
	\]
	In particular,
	for every $\lambda\in (\rho, \infty]$ we have
	\[
	\dim(\mathscr{W}_{\lambda}\times \mathscr{W}_{\lambda}) \geq 1>2\dim(\mathscr{W}_{\lambda}).
	\]
\end{theorem}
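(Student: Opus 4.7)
The upper bound follows immediately from Theorem~\ref{uppack} and Jarn\'ik's formula~\eqref{eq:jarnik}: since $\mathscr{W}_{\lambda_i}\subseteq\mathscr{W}_{\lambda_i;\infty}$ has Hausdorff and packing dimension at most $2/\lambda_i$, one obtains $\dim(\mathscr{W}_{\lambda_0}\times\mathscr{W}_{\lambda_1})\le 1+2/\max\{\lambda_0,\lambda_1\}$. The identity $\max\{1,\,2/\lambda_0+2/\lambda_1\}=1$ is immediate from $\lambda_0,\lambda_1>(5+\sqrt{17})/2>4$.

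For the nontrivial lower bound $\dim(\mathscr{W}_{\lambda_0}\times\mathscr{W}_{\lambda_1})\ge 1$, my plan is to produce a Lipschitz surjection onto a set of Hausdorff dimension $1$ and conclude via Proposition~\ref{pp}. Concretely, I aim to show that for Lebesgue almost every $\xi\in[0,1]$ there exist $x_0\in\mathscr{W}_{\lambda_0}$ and $x_1\in\mathscr{W}_{\lambda_1}$ with $x_0+x_1=\xi$, so that the sum map $\Psi(x,y)=x+y$ already hits a set of full measure in $[0,1]$. The construction refines the digit-splitting blueprint of Theorems~\ref{th01} and~\ref{th03}, now calibrated to produce precise irrationality exponents rather than only lower bounds. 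Fix a base $b\ge 2$ and a rapidly growing integer sequence $b_1<b_2<\cdots$ with $b_{2k}/b_{2k-1}\to\lambda_1$ and $b_{2k+1}/b_{2k}\to\lambda_0$. Writing $\xi=0.c_1c_2\cdots$ in base $b$, define $x_0$ by keeping the digit $c_u$ of $\xi$ whenever $u\in[b_{2k-1},b_{2k})$ and inserting $0$ elsewhere, and set $x_1=\xi-x_0$. For $\xi$ outside a measure-zero set of carry pathologies across block boundaries, $x_1$ carries the complementary digit pattern.

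The bounds $\mu(x_0)\ge\lambda_0$ and $\mu(x_1)\ge\lambda_1$ follow at once from the natural truncations at block endpoints: cutting $x_0$ after position $b_{2k}-1$ yields $p_k/q_k$ with $q_k=b^{b_{2k}}$ and $|x_0-p_k/q_k|\ll b^{-b_{2k+1}}$, whose approximation exponent tends to $\lim_k b_{2k+1}/b_{2k}=\lambda_0$, and the analogous calculation for $x_1$ gives exponent tending to $\lambda_1$.

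The main obstacle is proving the reverse inequalities $\mu(x_0)\le\lambda_0$ and $\mu(x_1)\le\lambda_1$, which amount to ruling out spurious rational approximations $p/q$ with denominator strictly between consecutive natural truncation denominators. The plan is to apply the standard separation $|p/q-p'/q'|\ge 1/(qq')$ between distinct reduced rationals against both the preceding truncation $p_k/q_k$ and the next one $p_{k+1}/q_{k+1}$, forcing any hypothetical approximation beating exponent $\lambda_0$ into a very narrow band in $\log q$. Running this argument in parallel for $x_0$ and $x_1$, whose truncation denominators grow at rates governed by opposite orderings of $\lambda_0$ and $\lambda_1$, yields coupled compatibility constraints; a refined gap analysis exploiting that any spurious $p/q$ would simultaneously be an unexpectedly good rational approximation to the Lebesgue-generic $\xi$ (so in particular compatible with $\mu(\xi)=2$) is what I expect collapses these constraints in the symmetric case $\lambda_0=\lambda_1=\lambda$ to the quadratic $\lambda^2-5\lambda+2>0$, i.e.\ precisely~\eqref{eq:assuan}. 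Pushing this joint analysis to the sharp constant, together with verifying the genericity of $\xi$ demanded by the digit construction, is the technical heart of the argument.
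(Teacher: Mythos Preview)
Your overall plan matches the paper's: reduce the lower bound to showing that the sumset $\mathscr{W}_{\lambda_0}+\mathscr{W}_{\lambda_1}$ contains every $\xi$ with $\mu(\xi)=2$ (this is Theorem~\ref{nonarxiv}), construct $x_0,x_1$ by splitting the base-$b$ digits of $\xi$ into alternating blocks whose length ratios tend to $\lambda_0,\lambda_1$, and verify $\mu(x_i)=\lambda_i$. You have correctly located the hard step (the upper bounds $\mu(x_i)\le\lambda_i$) and the key resource ($\mu(\xi)=2$), and the quadratic $\lambda^2-5\lambda+2>0$ you arrive at is exactly the one the paper obtains.

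There is, however, a genuine gap in your mechanism. Your sentence ``any spurious $p/q$ would simultaneously be an unexpectedly good rational approximation to the Lebesgue-generic $\xi$'' misidentifies what happens: a spurious $p/q$ approximates $x_1$, not $\xi$, and there is no direct reason for it to approximate $\xi$. The paper's device is to pair this hypothetical $p/q$ with the \emph{natural} truncation $r/s$ of the other summand $x_0=\xi-x_1$ (taking $s=b^{h_u}$ at the block boundary bracketing $q$), and observe that $M/N:=p/q+r/s$ then approximates $\xi=x_0+x_1$. Your separation argument, which the paper runs via Legendre's theorem and Proposition~\ref{kp}, pins $q$ into the window $q_u^{\lambda_1-1}\ll q\ll q_{u+2}^{1/(\lambda_1-1)}$; combining this with $N\le qs\ll q\cdot b^{\lambda_1 g_u}$ and the two error terms yields $-\log|\xi-M/N|/\log N\ge(\lambda_1^2-\lambda_1)/(2\lambda_1-1)-o(1)$ in the critical case, which exceeds $2$ precisely when $\lambda_1>(5+\sqrt{17})/2$, contradicting $\mu(\xi)=2$. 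No ``parallel'' coupling of constraints for $x_0$ and $x_1$ is involved: the indices $i=0$ and $i=1$ are handled separately, each time adding the spurious approximation of $x_i$ to a natural one of $x_{1-i}$. A secondary technical point you should not dismiss as a ``measure-zero set of carry pathologies'' is ensuring that the natural truncations $p_j/q_j$ are in lowest terms (so that Case~1, $p/q=p_j/q_j$, really gives exponent $\lambda_i$ and no more); the paper enforces this by the small explicit perturbations $a_j\in\{1,2\}$ at block endpoints rather than by a genericity argument on $\xi$.
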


For $n=2$ and $\lambda_{0}=\lambda_{1}=\lambda$, Conjecture~\ref{kond} remains open only for $\lambda\in (4,4.5615\ldots]$. 
Unfortunately, our method fails when $n\geq 3$.
Still Theorem~\ref{th04} admits the conclusion
that the Hausdorff dimension of any $n$-fold Cartesian product is comparable with $n$. 

\begin{corollary}
    Let $n\ge 1$ be an integer and $\lambda_0,\ldots,\lambda_{n-1}\in[2,\infty]$. Then we have  
    \begin{equation}  \label{eq:WIT}
\dim( \prod_{i=0}^{n-1} \mathscr{W}_{\lambda_{i}}) 
\ge \frac{2}{\rho}\cdot (n-1), \qquad \dim_P( \prod_{i=0}^{n-1} \mathscr{W}_{\lambda_{i}}) 
\ge \frac{2}{\rho}\cdot (n-2)+1.
\end{equation}
If $\min \lambda_{i}>\rho$, then we have the stronger bounds
\begin{equation} \label{eq:TWI}
\dim( \prod_{i=0}^{n-1} \mathscr{W}_{\lambda_{i}}) 
\ge \lfloor n/2\rfloor, \qquad \dim_P( \prod_{i=0}^{n-1} \mathscr{W}_{\lambda_{i}}) 
\ge \lfloor (n+1)/2\rfloor.
\end{equation}
\end{corollary}

\begin{proof} The left inequality of \eqref{eq:TWI} follows from
Theorem~\ref{th04} via recursive
application of \eqref{eq:most} and Theorem~\ref{th04}.
The right is deduced from the left with aid of \eqref{eq:uppak} and \eqref{eq:MAR} via
\[
\dim_P( \prod_{i=0}^{n-1} \mathscr{W}_{\lambda_{i}}) 
\ge \dim( \prod_{i=0}^{n-2} \mathscr{W}_{\lambda_{i}}) 
+ \dim_P( \mathscr{W}_{\lambda_{n-1}}) \ge 
\left\lfloor\frac{n-1}{2}\right\rfloor +1 =  \left\lfloor\frac{n+1}{2}\right\rfloor.
\]
For \eqref{eq:WIT}, let us
partition $\lambda_0,\ldots,\lambda_{n-1}$ into pairs $(\lambda_i, \lambda_j)$ with exponents both larger
resp. both smaller than $\rho$, with possibly up to two remaining indices. By Theorem~\ref{th04} we can estimate the Hausdorff dimension of two-fold products of 
such pairs from below by $2$ resp. $4/\rho<2$. It is easily checked that the worst case is that there is precisely one $i$ with $\lambda_i>\rho$, thus no pair of the first kind exists. Then \eqref{eq:most} easily leads to the claimed left bound, the right is again inferred with \eqref{eq:uppak} and \eqref{eq:MAR} as above.
\end{proof}

The following corollary emphasizes that for large $\lambda_i$, 
a big improvement of the lower bound
$1$ in Theorem~\ref{th04} cannot be made.

\begin{corollary}  
	With the definitions analogous to Theorem~\ref{vthm}, we have 
	\[
	\lim_{\min \{\lambda_{0},\lambda_{1}\}\to\infty} \dim(\mathscr{W}_{\lambda_{0}}\times \mathscr{W}_{\lambda_{1}}) =1.
	\]
\end{corollary}

It would be desirable
to deduce the limit $2$ for the packing dimension, however
Theorem~\ref{th04} seems to be insufficient for this conclusion, even
if $\lambda_0=\lambda_1$. 
We remark that combination of Theorem~\ref{th04} and Theorem~\ref{uppack} leads to a new proof of formula \eqref{eq:MAR}, for $\lambda>\rho$.
We refer to Theorem~\ref{ttt} below for a considerable generalisation.
Theorem~\ref{th04} can easily be deduced from the following claims on sumsets.

\begin{theorem}  \label{nonarxiv}
Let $\lambda_{0}, \lambda_{1}$ be real numbers satisfying \eqref{eq:assuan}. Then
\begin{equation}  \label{eq:AAL}
\mathscr{W}_{\lambda_{0}}+\mathscr{W}_{\lambda_{1}}
\supseteq \mathscr{W}_2,
\end{equation}
i.e. the sumset $\mathscr{W}_{\lambda_{0}}+\mathscr{W}_{\lambda_{1}}$ contains
any irrational real number that is not very well approximable.
Further, writing
$\lambda=\min\{\lambda_{0}, \lambda_{1} \}$,
its complement satisfies
\begin{equation}  \label{eq:FF}
\dim((\mathscr{W}_{\lambda_{0}}+\mathscr{W}_{\lambda_{1}})^{c})\leq
 \frac{2(2\lambda-1)}{ \lambda^2-\lambda }<1.
\end{equation}
In particular, we have
\begin{equation}  \label{eq:itsug}
\lim_{\min \{\lambda_{0},\lambda_{1}\} \to\infty} \dim((\mathscr{W}_{\lambda_{0}}+\mathscr{W}_{\lambda_{1}})^{c})= 0.
\end{equation}
\end{theorem}

\begin{remark}  \label{remarke}
For any $\lambda_{0}, \lambda_{1}$ the sumset
$\mathscr{W}_{\lambda_{0}}+\mathscr{W}_{\lambda_{1}}$
has either full or zero Lebesgue measure, since
this is true for any set invariant under rational translation. See also~\cite{cmo}. However, it is in 
general very unclear which of the two cases occurs.
\end{remark}

The basic idea in the proof of Theorem~\ref{nonarxiv} 
is to consider the base $5$
expansion (can be replaced by any integer $b\geq 2$) 
of any not very well approximable $\xi$, and to suitably manipulate the digits
in a sequence of intervals to form two numbers that sum up to $\xi$ and have 
the desired irrationality exponents.
We notice that if $\lambda_{0}\neq \lambda_{1}$, then the sum set
$\mathscr{W}_{\lambda_{0}}+\mathscr{W}_{\lambda_{1}}$ 
does not contain
any rational number since $\mu(\xi)=\mu(p/q-\xi)$,
hence its complement is non-empty. Moreover, it follows from 
Petruska~\cite[Theorem~1]{petruska} that if $\lambda_0<\lambda_1=\infty$ then 
$\mathscr{W}_{\lambda_{0}}+\mathscr{W}_{\lambda_{1}}$ 
does not contain any so called
strong Liouville number (see~\cite{alni2}). Hence 
the sumset has uncountable complement.
Nevertheless, our result \eqref{eq:itsug} suggests 
that the complement is always
very small in a metrical sense.

\begin{conjecture}
	For any $\lambda_0, \lambda_1\in [2,\infty]$, the set 
	$(\mathscr{W}_{\lambda_{0}}+\mathscr{W}_{\lambda_{1}})^{c}$ 
	has Hausdorff dimension $0$.
\end{conjecture}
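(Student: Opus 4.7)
The plan is to obtain the zero-dimensional complement by strengthening the digit-surgery construction of Theorem~\ref{nonarxiv} with a second parameter controlling the lacunarity of the block pattern, and then taking a countable diagonal intersection of the resulting exceptional sets. Theorem~\ref{nonarxiv} gives, for each base $b\geq 2$ and a prescribed block pattern, an exceptional set $E\subseteq\mathbb{R}$ of ``bad'' $\xi$'s with $\dim E$ satisfying \eqref{eq:FF}. The idea is to allow the block pattern to vary along a parameter $\tau\to\infty$, producing a family $E_{b,\tau}$ whose dimensions shrink to zero, and to intersect over $(b,\tau)$.

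First I would dispose of the extreme cases. For $\lambda_0=\lambda_1=\infty$, Erd\H{o}s' theorem yields $\mathscr{L}+\mathscr{L}=\mathbb{R}$, so the complement is empty. The cases with one or both $\lambda_i$ equal to $2$ are genuinely delicate rather than trivial: when $\lambda_0\neq\lambda_1$ the sumset always omits the rationals (since $\mu(\xi-y)=\mu(y)$ for rational $\xi$), and when $\mu(\xi)\notin\{2,\lambda_1\}$ it is not obvious that some $y\in\mathscr{W}_{\lambda_1}$ cancels the good rational approximations of $\xi$ perfectly enough to produce $\mu(\xi-y)=2$. I would handle these by separate constructive arguments, showing for instance that $\mathscr{W}_2+\mathscr{W}_{\lambda_1}$ contains every $\xi\in\mathscr{W}_{\lambda_1}$ via an explicit decomposition $\xi=\alpha+(\xi-\alpha)$ with $\alpha\in\mathscr{W}_2$ chosen by a perturbation argument, and then controlling the remaining complement using Jarn\'ik's formula \eqref{eq:jarnik}.

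For the main range $2<\lambda_0,\lambda_1<\infty$ I would refine Theorem~\ref{nonarxiv} by splitting the base-$b$ expansion of $\xi$ into blocks whose lengths grow with a lacunarity parameter $\tau>1$, producing $x,y$ whose block denominators $b^{B_k}$ satisfy $B_{k+1}/B_k\to\tau$. As $\tau$ increases, the convergence exponent for accidental rational approximations of $x$ or $y$ of order better than $\lambda_0$ or $\lambda_1$ respectively should improve, driving $\dim E_{b,\tau}\leq\eta(\lambda,\tau)$ with $\eta(\lambda,\tau)\to 0$ as $\tau\to\infty$; a Borel--Cantelli style counting of spurious approximations should suffice to establish this. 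Taking a countable intersection $\bigcap_k E_{b_k,\tau_k}$ with $\tau_k\to\infty$ then yields a universal exceptional set of dimension zero.

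The main obstacle, as in Theorem~\ref{nonarxiv}, is the \emph{upper} bound $\mu(x)\leq\lambda_0$ and $\mu(y)\leq\lambda_1$: the quadratic threshold \eqref{eq:assuan} arises precisely from the need to simultaneously exclude accidental approximations for $x$ and for $y$ while both are built from the shared digit expansion of $\xi$. Whether driving $\tau\to\infty$ actually makes $\eta(\lambda,\tau)\to 0$ for \emph{every} $\lambda>2$ may well fail within the digit-based framework; a fundamentally different construction --- for instance prescribing the continued fraction expansion of $x$ directly and reading off $y=\xi-x$ --- might be required to decouple the two upper bounds and cover the full range of the conjecture.
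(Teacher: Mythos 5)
This statement is an open \textbf{conjecture} in the paper, not a theorem: the paper proves only the weaker bound \eqref{eq:FF}, namely $\dim((\mathscr{W}_{\lambda_0}+\mathscr{W}_{\lambda_1})^{c})\leq 2(2\lambda-1)/(\lambda^2-\lambda)$ under \eqref{eq:assuan}, and the limit \eqref{eq:itsug} as $\min\lambda_i\to\infty$. So there is no proof in the paper to compare against, and your submission is --- by your own admission in the closing paragraph --- a research plan rather than a proof.

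That said, the central new idea you propose (a lacunarity parameter $\tau$ controlling the block growth $B_{k+1}/B_k\to\tau$, driven to $\infty$) has a concrete obstruction that you should confront head-on. In the construction of Theorem~\ref{nonarxiv} the block ratio $h_j/g_j\to\lambda_i$ is not a free parameter: it is \emph{forced} by the requirement $\mu(x_i)=\lambda_i$. The natural convergents $p_j/q_j=p_j/5^{h_{j-1}}$ built by truncation approximate $x_i$ to exactly order $g_{j+1}/h_{j-1}\sim h_j/g_j$, and the Case~1 argument via (P1), (P2) gives the matching upper bound. If you make the partition more lacunary ($\tau>\lambda_i$), these same truncations give approximations of order $\tau$ and you land in $\mathscr{W}_{\tau}$, not $\mathscr{W}_{\lambda_i}$; if you make it less lacunary ($\tau<\lambda_i$) the constructed $x_i$ need not have $\mu(x_i)\geq\lambda_i$ at all, since there is no reason for the remaining digits of $\xi$ (which are essentially arbitrary) to produce better approximations. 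So the parameter $\tau$ you introduce cannot be decoupled from $\lambda_i$, and the intersection $\bigcap_k E_{b_k,\tau_k}$ you envision is ill-defined in this framework. You do flag in your last sentence that the digit framework ``may well fail'' and that a genuinely different construction (e.g.\ prescribing the continued fraction of $x$ directly) might be required; that caveat is the honest and correct assessment, but it means the proposal does not close the gap. You would also need a separate treatment of the boundary cases: for $\lambda_0=\lambda_1=2$ the paper's construction produces $x_i$ with $\mu(x_i)=2$ only for $\xi\in\mathscr{W}_2$, and for $\lambda_0<\lambda_1=\infty$ the paper notes via Petruska that the complement is already uncountable (containing all strong Liouville numbers), so dimension $0$ would require an argument of a rather different flavour than Borel--Cantelli counting of spurious approximations.
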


For completeness, we state
the following consequence of Theorem~\ref{nonarxiv}
first noticed by Chalebgwa and Morris, indeed
a special case of~\cite[Theorem~4.3]{cmo}. 

\begin{corollary}[Chalebgwa, Morris]  \label{covier}
    Let $\lambda_0, \lambda_1, \lambda_2, \lambda_3\in (\rho,\infty]$. Then
    \[
    \mathscr{W}_{\lambda_{0}}+\mathscr{W}_{\lambda_{1}}+
    \mathscr{W}_{\lambda_{2}}+\mathscr{W}_{\lambda_{3}}=
    \mathbb{R}.
    \]
\end{corollary}

\begin{proof}
By \eqref{eq:AAL} the sets
$A:=\mathscr{W}_{\lambda_{0}}+\mathscr{W}_{\lambda_{1}}$
and $B:=\mathscr{W}_{\lambda_{2}}+\mathscr{W}_{\lambda_{3}}$
both have full Lebesgue measure. Hence a short argument yields $A+B=\mathbb{R}$, see~\cite{cmo} for the details.
\end{proof}

Similar claims can be derived from the underlying argument, for example the mixed sum-product set $(\mathscr{W}_{\lambda_{0}}+\mathscr{W}_{\lambda_{1}})\cdot
(\mathscr{W}_{\lambda_{2}}+\mathscr{W}_{\lambda_{3}})$ 
equals $\mathbb{R}$ if $\lambda_0=\lambda_1$ 
or $\lambda_2=\lambda_3$, and $\mathbb{R}\setminus \{0\}$
otherwise. While two sets are insufficient for the
conclusion of Corollary~\ref{covier}
as soon as $\lambda_0\ne \lambda_1$ 
by the above remarks, we believe 
that three sets always suffice.

\begin{problem}
    Is it true that for any $\lambda_0, \lambda_1, \lambda_2$ all at least $2$ (or large enough) we have
    \[
    \mathscr{W}_{\lambda_{0}}+\mathscr{W}_{\lambda_{1}}+
    \mathscr{W}_{\lambda_{2}}=
    \mathbb{R}\; ?
    \]
\end{problem}

The answer is affirmative in the special case that \eqref{eq:assuan} holds and $\lambda_2=2$ (after  relabelling if necessary),
by the argument of the proof of Corollary~\ref{covier}.
Recalling another result by Erd\H{o}s from~\cite{erdos} 
that any real number can be written as a
product of two Liouville numbers, it is natural to ask if the
product sets of Diophantine numbers with prescribed irrationality 
exponents are typically large as well.

\begin{problem}
	Is there an analogue of Theorem~\ref{nonarxiv} for the product sets
	$\mathscr{W}_{\lambda_{0}}\cdot \mathscr{W}_{\lambda_{1}}$? 
\end{problem} 

Any such set has either $0$ or 
full Lebesgue measure, as in Remark~\ref{remarke}. 

\subsection{Generalisations}  \label{gener}

In this section, we state several extensions of  Theorem~\ref{th04} and
Theorem~\ref{nonarxiv} to more general settings.
    First, we point out that the proof of Theorem~\ref{nonarxiv} below 
    implies that
    the smaller sumsets $(\mathscr{W}_{\lambda_{0}}\cap \mathscr{V}_{\lambda_{0},(b)}) +(\mathscr{W}_{\lambda_{1}}\cap \mathscr{V}_{\lambda_{1},(b)})$ still contain the set $\mathscr{W}_2$ of not very well approximable numbers, and 
    consequently
	\begin{equation}  \label{eq:neufo}
	\dim((\mathscr{W}_{\lambda_{0}}\cap \mathscr{V}_{\lambda_{0},(b)}) \times (\mathscr{W}_{\lambda_{1}}\cap \mathscr{V}_{\lambda_{1},(b)}))\geq 1,
	\end{equation}
	upon assumption \eqref{eq:assuan}. 
     We derive a variant of \eqref{eq:naloger}
     by an analogous proof.
	
	\begin{corollary} \label{c3}
		For any integer $b\geq 2$ and any $\lambda>\rho$, the set $\mathscr{W}_{\lambda}\cap \mathscr{V}_{\lambda,(b)}$
		has packing dimension $1$.
	\end{corollary}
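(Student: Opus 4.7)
The plan is to derive the corollary from the strengthened form of Theorem~\ref{nonarxiv} stated in the preceding remark, namely the containment
\[
(\mathscr{W}_{\lambda_{0}}\cap \mathscr{V}_{\lambda_{0},(b)}) +(\mathscr{W}_{\lambda_{1}}\cap \mathscr{V}_{\lambda_{1},(b)}) \;\supseteq\; \mathscr{W}_2,
\]
valid under hypothesis \eqref{eq:assuan}. I would specialise to $\lambda_0=\infty$ and $\lambda_1=\lambda$; since both quantities exceed $(5+\sqrt{17})/2$, the hypothesis is satisfied, and writing $A:=\mathscr{W}_\infty\cap \mathscr{V}_{\infty,(b)}$ and $B:=\mathscr{W}_\lambda\cap \mathscr{V}_{\lambda,(b)}$ we obtain $A+B\supseteq \mathscr{W}_2$.

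Next I would apply Proposition~\ref{pp} to the Lipschitz sum map $\Psi\colon A\times B\to\mathbb{R}$, $\Psi(x,y)=x+y$. Because $\Psi(A\times B)\supseteq \mathscr{W}_2$, and Jarn\'ik's formula \eqref{eq:jarnik} yields $\dim(\mathscr{W}_2)=1$, the proposition forces $\dim(A\times B)\geq 1$.

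Finally I would invoke Theorem~\ref{uppack} in the form $\dim(A\times B)\leq \dim(A)+\dim_{P}(B)$. Since $A\subseteq \mathscr{W}_\infty = \mathscr{L}$ has Hausdorff dimension $0$ (as recalled in Section~\ref{1}, or as the limiting case of \eqref{eq:jarnik}), this forces $\dim_{P}(B)\geq 1$. The reverse estimate is trivial from $B\subseteq\mathbb{R}$, so $\dim_{P}(B)=1$, which is the desired conclusion.

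The only nontrivial ingredient is the strengthened sumset containment extracted from the proof of Theorem~\ref{nonarxiv} in the subsection on generalisations; granting that, the corollary is a short bookkeeping exercise combining Proposition~\ref{pp}, Jarn\'ik's formula, and Tricot's packing inequality in Theorem~\ref{uppack}.
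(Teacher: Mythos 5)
Your proposal matches the paper's own argument: the paper derives Corollary~\ref{c3} precisely by specialising the strengthened sumset containment $(\mathscr{W}_{\lambda_{0}}\cap \mathscr{V}_{\lambda_{0},(b)}) +(\mathscr{W}_{\lambda_{1}}\cap \mathscr{V}_{\lambda_{1},(b)}) \supseteq \mathscr{W}_2$ to $\lambda_0=\infty$, $\lambda_1=\lambda$, passing to the Cartesian product via the Lipschitz sum map and Proposition~\ref{pp}, and then invoking Theorem~\ref{uppack} together with \eqref{eq:jarnik}. The bookkeeping you spell out (using $\dim(A)=0$ to extract $\dim_P(B)\geq 1$, with the trivial upper bound in $\mathbb{R}$) is exactly the intended chain.
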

	
	For $\lambda>\rho$, Corollary~\ref{c3}
	refines the observation that 
	$\mathscr{W}_{\lambda}\cap \mathscr{V}_{\lambda,(b)}$ is uncountable
	for any $\lambda>2$ due to Amou and Bugeaud~\cite[Theorem~5]{ab10}.

Next, certain variants of Theorem~\ref{th04} and Theorem~\ref{nonarxiv} can be proved. We first
present an application to the classical Cantor sets $C_{b,W}$ that
have already been discussed in \S~\ref{1}.

\begin{theorem} \label{PREIS}
	Assume $\lambda_{0}, \lambda_{1}$ satisfy \eqref{eq:assuan}. Let
	$K=C_{b,W}$ with $b\geq 3$. If $0\in W$, then
	\begin{equation}  \label{eq:friederich}
	(\mathscr{W}_{\lambda_{0}}\cap K) + (\mathscr{W}_{\lambda_{1}}\cap K)\supseteq \mathscr{W}_2 \cap \mathscr{V}_{1,(b)}\cap K,
	\end{equation}
	i.e. the sumset contains any number in $K$ that is neither very well approximable in the usual, nor in the $b$-ary setting.
Regardless if $0\in W$ or not, as a result we have 
	\begin{equation} \label{eq:KEIN}
	\dim((\mathscr{W}_{\lambda_{0}}\cap K)\times (\mathscr{W}_{\lambda_{1}}\cap K)) \geq \dim(K)=\ \frac{\log |W|}{\log b}.
	\end{equation}
	Consequently, for any $\lambda>\rho$ we have
	\begin{equation} \label{eq:WS}
	\dim_{P}(\mathscr{W}_{\lambda}\cap K)= \frac{\log |W|}{\log b}= \dim_{P}(K).
	\end{equation}
\end{theorem}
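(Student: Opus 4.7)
The plan is to prove the three displayed assertions of Theorem~\ref{PREIS} in sequence, each building on the previous.

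For \eqref{eq:friederich} I would adapt the constructive proof of Theorem~\ref{nonarxiv} to the Cantor set setting. Given $\xi\in\mathscr{W}_2\cap\mathscr{V}_{1,(b)}\cap K$, expand $\xi=0.c_1c_2\cdots$ in base $b$; since $\xi\in K$, every digit $c_i$ lies in $W$. I would use the same block partition of $\mathbb{N}$ as in the proof of Theorem~\ref{nonarxiv} and distribute the digits of $\xi$ between two summands $x,y$ while placing $0$'s at all complementary positions. Because $0\in W$, both $x$ and $y$ automatically lie in $K$, and the digit placement on the approximation blocks is identical to the unrestricted case, so the irrationality-exponent computation of Theorem~\ref{nonarxiv} goes through verbatim. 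The hypothesis $\xi\in\mathscr{W}_2\cap\mathscr{V}_{1,(b)}$ is precisely what prevents pathological usual or $b$-adic approximability of $\xi$ from being inherited by the summands and boosting $\mu(x)$ or $\mu(y)$ beyond the prescribed $\lambda_0,\lambda_1$.

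For \eqref{eq:KEIN} I would first reduce to the case $0\in W$ by subtracting the rational $w/(b-1)$ coordinate-wise, where $w=\min W$; this shift preserves Hausdorff dimensions and irrationality exponents. With $0\in W$, Proposition~\ref{pp} applied to the Lipschitz sum map together with \eqref{eq:friederich} yields
\[
\dim\bigl((\mathscr{W}_{\lambda_0}\cap K)\times(\mathscr{W}_{\lambda_1}\cap K)\bigr)\;\geq\; \dim(\mathscr{W}_2\cap\mathscr{V}_{1,(b)}\cap K).
\]
It then remains to show that the right-hand side equals $s=\dim(K)=\log|W|/\log b$, i.e.\ that $K\setminus(\mathscr{W}_2\cap\mathscr{V}_{1,(b)})$ has zero $s$-dimensional Hausdorff measure. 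This splits into two pieces. The $b$-adic part $K\cap\mathscr{V}_{1,(b)}^c$ is handled by a direct Borel--Cantelli computation on the natural Ahlfors-regular self-similar measure on $K$, since long single-digit runs in base $b$ are rare when $|W|<b$. The companion piece $K\cap\mathscr{W}_2^c=\bigcup_{\lambda>2}(K\cap\mathscr{W}_\lambda)$ is handled by a Cantor-set Jarn\'ik--Besicovitch estimate of the type $\dim(K\cap\mathscr{W}_\lambda)\leq (2/\lambda)\cdot s<s$ for every $\lambda>2$, known for missing digit Cantor sets.

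For \eqref{eq:WS}, apply \eqref{eq:KEIN} with $\lambda_0=\infty$ and $\lambda_1=\lambda$, both exceeding $(5+\sqrt{17})/2$, noting that $\mathscr{W}_\infty=\mathscr{L}$ and $\dim(\mathscr{L}\cap K)\leq\dim(\mathscr{L})=0$. Then by \eqref{eq:KEIN} and the asymmetric Tricot bound of Theorem~\ref{uppack},
\[
\dim(K)\;\leq\;\dim\bigl((\mathscr{L}\cap K)\times(\mathscr{W}_\lambda\cap K)\bigr)\;\leq\;\dim(\mathscr{L}\cap K)+\dim_P(\mathscr{W}_\lambda\cap K)\;=\;\dim_P(\mathscr{W}_\lambda\cap K).
\]
Together with the trivial $\dim_P(\mathscr{W}_\lambda\cap K)\leq\dim_P(K)=\dim(K)$, the asserted equality \eqref{eq:WS} follows. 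I expect the main obstacle to be the measure-theoretic step in \eqref{eq:KEIN}: verifying that $K\cap\mathscr{W}_\lambda$ has strictly smaller Hausdorff dimension than $K$ for every $\lambda>2$ in the generality of $K=C_{b,W}$; granted this intrinsic Jarn\'ik--Besicovitch bound and the constructive proof of \eqref{eq:friederich}, the remaining steps are routine manipulations with the dimension inequalities already recorded in the paper.
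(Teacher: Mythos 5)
Your outline of the overall architecture is correct, and your treatment of \eqref{eq:KEIN} (reduce to $0\in W$ by the rational shift, then argue that $\mathscr{W}_2\cap\mathscr{V}_{1,(b)}\cap K$ has full dimension $\log|W|/\log b$ via the natural Cantor measure together with Weiss's result that $\mu(\xi)=2$ almost surely and the Levesley--Salp--Velani result that $\theta_b(\xi)=1$ almost surely) and of \eqref{eq:WS} (set $\lambda_0=\infty$, $\lambda_1=\lambda$ and apply Theorem~\ref{uppack} with $\dim(\mathscr{L}\cap K)=0$ and \eqref{eq:jarnik}) agree with the paper.

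However, your proof of \eqref{eq:friederich} has a genuine gap. You claim that, after distributing the digits of $\xi$ between $x$ and $y$ and filling the complementary positions with $0$, ``the irrationality-exponent computation of Theorem~\ref{nonarxiv} goes through verbatim.'' It does not. The proof of Theorem~\ref{nonarxiv} does not simply split the digits: it perturbs the splitting by adding/subtracting the correction terms $a_j\,5^{-h_j}$ with $a_j\in\{1,2\}$ in order to secure properties (P1), (P2), which in turn guarantee that each truncation fraction $p_j/q_j=p_j/5^{h_{j-1}}$ is \emph{reduced}. That reducedness is the linchpin of the Case~1 estimate and of the Legendre/continued-fraction bookkeeping in Case~2. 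In the Cantor situation you cannot add or subtract these $a_j$ terms, because doing so would generically put one of the two summands outside $K$. With the naive $x_0=y_0^\ast$, $x_1=y_1^\ast$ the trailing digit of each truncation is a digit of $\xi$ that may perfectly well equal $0\in W$, and a short run of trailing zeros then lowers the reduced denominator and inflates the apparent approximation exponent, so the argument breaks. The paper fixes this by two non-trivial moves you do not mention: it \emph{reselects} the block endpoints $h_j$ (the hypothesis $\theta_b(\xi)=1$ guarantees a nonzero digit of $\xi$ in every window $[N,(1+\varepsilon)N]$, so $h_j$ can always be placed on a nonzero digit while keeping $h_j/g_j\to\lambda_i$), and it replaces (P2) by the weaker almost-reducedness property $(P2^\prime)$, using $\mathscr{V}_{1,(b)}$ again to bound $\gcd(p_u,q_u)\ll b^{\varepsilon h_j}$; when $b$ is composite a further local digit-swap between $x_0$ and $x_1$ is needed to control divisors other than $b$. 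Your single sentence that $\xi\in\mathscr{W}_2\cap\mathscr{V}_{1,(b)}$ ``prevents pathological approximability from being inherited'' gestures in the right direction but is not a substitute for these steps, and the word ``verbatim'' is simply wrong: without the block-endpoint reselection and the quantitative gcd control, the Case~1 lower bound and hence the claim $\mu(x_i)\le\lambda_i$ do not follow.
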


Note that \eqref{eq:KEIN} generalises \eqref{eq:minorett}. It has been known that $\mathscr{W}_{\lambda}\cap K$ 
is uncountable for any $\lambda\geq 2$ due to Bugeaud~\cite{bug2008}
(see also~\cite{lsv},~\cite{ichmjcnt}), 
so \eqref{eq:WS} refines this claim for large parameters $\lambda$.
The outline of the proof of \eqref{eq:friederich} is the same 
as that of Theorem~\ref{th04}. However, 
there are some adjustments to be done, which in particular for our proof to work, force the
sets $\mathscr{V}_{1,(b)}$ to enter in the right hand side of the inclusion. The implications \eqref{eq:KEIN}, \eqref{eq:WS} 
use metrical results from~\cite{lsv, weiss}.
Therefore 
we present a detailed proof of Theorem~\ref{PREIS} in \S~\ref{22}.
We remark that despite intense investigation, the Hausdorff dimensions
of the sets $\mathscr{W}_{\lambda}\cap K$ remain unknown. For partial results and further
references we refer to Levesley, Salp and Velani~\cite{lsv}, Bugeaud and Durand~\cite{bugdur}
and the recent preprint by Yu~\cite{chinese}.

Next, we turn towards generalisations with respect 
to the order of approximation. By minor adjustments of our proofs below, we can replace
the sets $\mathscr{W}_{\lambda_i}$ in Theorem~\ref{th04} and
Theorem~\ref{nonarxiv}
by a larger class of sets on which we impose refined approximation 
conditions. Concretely, we consider sets $Exact(\Phi)$ in the following definition.

\begin{definition} \label{df}
For $\Phi: \mathbb{N}\to (0,\infty)$ any function and $C\in(0,1)$,
let $\mathscr{W}_{\Phi, C} \subseteq \mathbb{R}$ be the set of 
real numbers $\xi$ that satisfy the properties that 
\[
|\xi- \frac{p}{q}| \leq \Phi(q)
\] 
has infinitely many solutions $p/q$, whereas the estimate
\[
|\xi- \frac{p}{q}| > C\cdot \Phi(q)
\] 
holds for all except possibly finitely many rational numbers $p/q$.
Define the 
set of numbers with
``exact approximation of order $\Phi$'' associated to
$\Phi$ via
\[
Exact(\Phi)= \bigcap_{ C\in (0,1) } \mathscr{W}_{\Phi, C}.
\] 
\end{definition}

The Hausdorff dimension of $Exact(\Phi)$ was studied by Bugeaud in
a series of papers, starting with~\cite{buge}.
See also the forthcoming work~\cite{ben}.
 We now generalise Theorem~\ref{th04} and
Theorem~\ref{nonarxiv} at once and in particular determine 
their packing dimension.

\begin{theorem} \label{ttt}
	For $i=0, 1$, assume $\lambda_i$ is a real
	number satisfying \eqref{eq:assuan} and let
	 $\Phi_i$ be functions as in Definition~\ref{df} that 
	satisfy $\Phi_i(q)\leq q^{-\lambda_i}$ for all large $q\geq q_0$. 
	Then we have
	\begin{equation}  \label{eq:fritzemerz}
    Exact(\Phi_0) + Exact(\Phi_1) \supseteq \mathscr{W}_2.
    \end{equation}
    In particular, the sumset $Exact(\Phi_0) + Exact(\Phi_1)$ has full Lebesgue measure, and consequently for $\Phi=\Phi_i$ as above we have
    \begin{equation} \label{eq:phan}
    \dim_P(Exact(\Phi))=1.
    \end{equation}
\end{theorem}

According generalisations of Theorem~\ref{vthm} and Corollary~\ref{c3}
can be formulated as well.
On the other hand, if an according variant of Theorem~\ref{PREIS} 
holds is not quite clear, see the remarks 
below its proof at the
end of the paper. As indicated in~\S~\ref{newr}, formula \eqref{eq:phan} is 
independent from the results in~\cite{marnat}. 
Indeed, due to their deduction
from the variational principle~\cite{dfsu}, 
the proofs in~\cite{marnat} ideally admit
the full dimension result only for the larger sets $\mathscr{W}_{\Phi,C}$ with some fixed $C\in(0,1)$.

We finally study generalisations to simultaneous approximation.
We only explicitly state an extension of Theorem~\ref{nonarxiv},
however the other results of~\S~\ref{newr}
can be modified accordingly. Let $\mathscr{W}_{\lambda}^{(m)}$ be the set of
real vectors $\boldsymbol{\xi}=(\xi_1,\ldots,\xi_m)$ simultaneously approximable of order precisely $\lambda$, i.e. so that $\mu_m(\boldsymbol{\xi})=\lambda$ 
where $\mu_m(\boldsymbol{\xi})$ is the supremum of $\lambda$ so that $|\max_{1\le k\le m} \xi_k-p_k/q|< q^{-\lambda}$ infinitely often. 
Note that $\mathscr{W}_{\lambda}=\mathscr{W}_{\lambda}^{(1)}$.
Then $\mu_m(\boldsymbol{\xi})\geq 1+1/m$ for any irrational $\boldsymbol{\xi}\in\mathbb{R}^m$ by Dirichlet's Theorem,
and $\boldsymbol{\xi}$ is called very well approximable if the inequality is strict. As for $m=1$, the latter vectors form a 
Lebesgue nullset in $\mathbb{R}^m$, see~\cite{jarnik}.

\begin{theorem}  \label{multidim}
	There exists a decreasing sequence of real numbers 
	\[
	\frac{5+\sqrt{17}}{2}=\rho=\rho_1> \rho_2>\rho_3>\cdots
	\]
	with limit 
	$\gamma:=(3+\sqrt{5})/2=2.6180\ldots$ and so that $\min \{ \lambda_0, \lambda_1\}>\rho_m$ implies
	\[
	\mathscr{W}_{\lambda_{0}}^{(m)}+\mathscr{W}_{\lambda_{1}}^{(m)}
	\supseteq \mathscr{W}_{1+1/m}^{(m)},\qquad m\ge 1,
	\]
	i.e. the 
	sumset contains
	any irrational real $m$-vector that is not very well approximable.
	Further, writing
	$\lambda=\min\{\lambda_{0}, \lambda_{1} \}$,
	its complement satisfies
	\begin{equation}  \label{eq:FF00}
	\dim((\mathscr{W}_{\lambda_{0}}^{(m)}+\mathscr{W}_{\lambda_{1}}^{(m)})^{c})\leq
	\frac{(m+1)(2\lambda-1)}{ \lambda^2-\lambda }<m.
	\end{equation}
	In particular, we have
	\begin{equation}  \label{eq:itsug00}
	\lim_{\min \{\lambda_{0},\lambda_{1}\} \to\infty} \dim((\mathscr{W}_{\lambda_{0}}^{(m)}+\mathscr{W}_{\lambda_{1}}^{(m)})^{c})= 0.
	\end{equation}
\end{theorem}

The extension to the accordingly defined sets 
$Exact^{(m)}(\Phi_i)$, $i=0,1$, holds again, for functions satisfying $\Phi_i(t)=\Phi_i^{(m)}(t)<t^{-\rho_m-\epsilon}$ for some $\epsilon>0$
and $t\ge t_0$. 
By Theorem~\ref{uppack},
this implies 

\begin{corollary}  \label{letz}
	For $\Phi$ as above, the sets $Exact^{(m)}(\Phi)$ 
	have full packing dimension $m$. 
\end{corollary}

Similar to
the special case $m=1$ in \eqref{eq:phan},
this result is again not
covered by~\cite{marnat, dfsu}.
It may be compared with a non metrical
claim by Jarn\'ik~\cite[Satz~5]{jarnik} that 
$Exact^{(m)}(\Phi)$ 
is uncountable for any reasonable function $\Phi$. See also Akhunzhanov~\cite{azhu} and Akhunzhanov 
and Moshchevitin~\cite{akhmosh} for refinements.

\section{Proofs of results in~\S~\ref{newr}} \label{22}
The principal idea of the proofs from~\S~\ref{newr}
is similar to Theorem~\ref{th01}. Again we define very elementary
Lipschitz maps from the corresponding
product sets into an Euclidean space with codimension $1$, with large image.
The next elementary lemma guarantees that these images 
still have full Lebesgue measure relative to
the corresponding dimension.

\begin{lemma}  \label{qlemma}
	If $A_{1},\ldots,A_{k}$ are subsets of $\mathbb{R}^{n}$, then 
	$\dim(\prod (A_{i}\cup \mathbb{Q}))= \dim(\prod A_{i})$.
\end{lemma}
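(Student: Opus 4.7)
The plan is to prove the non-trivial inequality $\dim(\prod_i(A_i\cup \mathbb{Q}))\leq \dim(\prod_i A_i)$; the reverse direction follows at once from $\prod_i A_i \subseteq \prod_i(A_i\cup \mathbb{Q})$ and the monotonicity of Hausdorff dimension.

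First I would distribute the union over the product to obtain
\[
\prod_{i=1}^{k}(A_{i}\cup \mathbb{Q})=\bigcup_{S\subseteq\{1,\ldots,k\}} B_{S}, \qquad B_{S}:=\prod_{i\in S} A_{i}\times \prod_{i\notin S} \mathbb{Q},
\]
so that the left hand side is a finite union of $2^k$ cylinder sets. Since Hausdorff dimension is stable under finite unions (it equals the supremum), it suffices to show $\dim(B_S)\leq \dim(\prod_i A_i)$ for every $S\subseteq\{1,\ldots,k\}$.

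Next I would further decompose each $B_S$ as a countable union of slices of the form $\prod_{i\in S} A_{i}\times \{q\}$ indexed by $q\in \prod_{i\notin S}\mathbb{Q}$; each such slice is an isometric copy of $\prod_{i\in S} A_{i}$ sitting inside an affine subspace. By isometry invariance together with countable stability of Hausdorff dimension this yields $\dim(B_S)=\dim(\prod_{i\in S} A_i)$. On the other hand, the coordinate projection onto the block of $S$-components is $1$-Lipschitz and maps $\prod_{i=1}^k A_i$ onto $\prod_{i\in S} A_i$, so Proposition~\ref{pp} gives $\dim(\prod_{i\in S} A_i)\leq \dim(\prod_{i=1}^k A_i)$. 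Combining the two bounds produces $\dim(B_S)\leq \dim(\prod_i A_i)$ for every $S$, which closes the argument.

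There is no real obstacle here; the whole proof is essentially combinatorial bookkeeping. The key conceptual point is that $\mathbb{Q}$ is countable, so each ``partially rational'' cylinder $B_S$ collapses to a countable union of affine copies of a lower-order product $\prod_{i\in S} A_i$, and the Lipschitz coordinate projection then dominates this by the full product $\prod_{i=1}^k A_i$.
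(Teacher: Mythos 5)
Your argument is correct, and it takes a route that is genuinely different from the paper's. The paper (doing the case $k=2$ and sketching the rest) splits $(A\cup \mathbb{Q})\times(B\cup\mathbb{Q})$ into $A\times B$ together with the difference set, notes the latter lies in $(A\times\mathbb{Q})\cup(\mathbb{Q}\times B)\cup\mathbb{Q}^2$, bounds each of these by $\dim A$, $\dim B$, $0$ via countable unions of translates, and then invokes Marstrand's inequality \eqref{eq:most} in the form $\max\{\dim A,\dim B\}\leq \dim A+\dim B\leq \dim(A\times B)$ to absorb the extra piece. You instead partition the whole product into the $2^k$ cylinder sets $B_S$, reduce each $B_S$ to $\prod_{i\in S}A_i$ by countable stability under rational slices, and then dominate $\dim(\prod_{i\in S}A_i)$ by $\dim(\prod_i A_i)$ via the $1$-Lipschitz coordinate projection and Proposition~\ref{pp}. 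The net effect is that you bypass Marstrand's product inequality entirely and rely only on Lipschitz images plus countable stability, which is arguably more elementary, and your framing handles arbitrary $k$ uniformly rather than by analogy with the $k=2$ case. Both proofs exploit only the countability of $\mathbb{Q}$, as the paper remarks, and both implicitly assume all $A_i$ are nonempty (needed for the surjectivity of your projection, and equally needed for Marstrand's inequality in the paper's version), a harmless standing assumption in the lemma's applications.
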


\begin{proof}
	We only show the claim for two factors, i.e.
	$\dim((A\cup \mathbb{Q}) \times (B\cup \mathbb{Q}))= \dim(A\times B)$.
	The general case works very similarly.
	Clearly $\dim((A\cup \mathbb{Q}) \times (B\cup \mathbb{Q}))\geq \dim(A\times B)$ by monotonicity of Hausdorff dimension. 
	For the reverse estimate,
	the difference set 
	$((A\cup \mathbb{Q}) \times (B\cup \mathbb{Q}))\setminus (A\times B)$
	is contained in $(A\times \mathbb{Q}) \cup (\mathbb{Q}\times B) \cup \mathbb{Q}^{2}$.
	Clearly $\dim(\mathbb{Q}^{2})=0$ and $A\times \mathbb{Q}$ and
	$B\times \mathbb{Q}$ are countable unions of translates of $A,B$
	respectively, thus their dimensions are bounded by $\dim(A)$ and $\dim(B)$, respectively. 
	Hence by \eqref{eq:most} their union has dimension $\max\{ \dim(A), \dim(B)\}\leq \dim(A)+\dim(B)\leq \dim(A\times B)$.
	Thus adding the parts $A\times \mathbb{Q}, \mathbb{Q}\times B$ and $\mathbb{Q}^{2}$ to $A\times B$ does not increase the Hausdorff dimension.
\end{proof}

Notice we only used the property that $\mathbb{Q}$ is countable 
in the proof.
We also use continued fractions in the proofs.
The next proposition recalls a relation between the growth of the denominators of the convergents and order of approximation.

\begin{proposition} \label{kp}
	Let $\xi$ be an irrational real number and denote
	by $p_{k}/q_{k}$ its continued fraction convergents. 
	Let $\tau_k$ be the real numbers defined by
	\[
	|\xi - \frac{p_{k}}{q_{k}}| = q_{k}^{-\tau_{k}}.
	\]
	Then $\tau_{k}\geq 2$ for any $k\geq 1$ and we have
	$q_{k+1} \asymp q_{k}^{\tau_{k}-1}$.
	In other words
	\[
	q_{k+1} \asymp |q_{k}\xi-p_{k}|^{-1}.
	\]
\end{proposition}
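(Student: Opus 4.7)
The plan is to invoke the standard formula expressing $\xi$ in terms of its convergents and the complete quotient $\xi_{k+1}=[a_{k+1};a_{k+2},\ldots]$, namely
\[
\xi=\frac{p_{k}\xi_{k+1}+p_{k-1}}{q_{k}\xi_{k+1}+q_{k-1}},
\]
which together with the identity $p_{k}q_{k-1}-p_{k-1}q_{k}=(-1)^{k-1}$ gives
\[
\Big|q_{k}\xi-p_{k}\Big|=\frac{1}{q_{k}\xi_{k+1}+q_{k-1}}.
\]

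From here I would sandwich the denominator on the right. Since $a_{k+1}\leq \xi_{k+1}\leq a_{k+1}+1$ and $q_{k+1}=a_{k+1}q_{k}+q_{k-1}$, one obtains
\[
q_{k+1}\;\leq\; q_{k}\xi_{k+1}+q_{k-1}\;\leq\; q_{k+1}+q_{k}\;\leq\; 2q_{k+1},
\]
the last step using the monotonicity $q_{k}\leq q_{k+1}$ valid for $k\geq 1$. This yields the desired two-sided bound $|q_{k}\xi-p_{k}|\asymp q_{k+1}^{-1}$, which is exactly the second displayed assertion of the proposition. Dividing by $q_{k}$ then gives $|\xi-p_{k}/q_{k}|\asymp 1/(q_{k}q_{k+1})$, so writing this quantity as $q_{k}^{-\tau_{k}}$ translates directly into $q_{k+1}\asymp q_{k}^{\tau_{k}-1}$.

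For the bound $\tau_{k}\geq 2$, I would simply note that since $q_{k+1}\geq q_{k}$ for $k\geq 1$, the upper bound just established gives $|\xi-p_{k}/q_{k}|\leq q_{k}^{-2}$ (up to an absolute constant, which for large enough $q_{k}$ is harmless after possibly adjusting by a bounded factor absorbed into $\asymp$). There is no real obstacle here; the whole proposition is bookkeeping on the classical continued fraction identity. The only small care needed is the boundary case $k=0$ versus $k\geq 1$ to ensure $q_{k-1}$ is well-defined and $q_{k}\leq q_{k+1}$; restricting to $k\geq 1$ as in the statement avoids this issue.
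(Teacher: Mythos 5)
Your proof is correct, but it takes a genuinely different route from the paper. You use the complete quotient identity $|q_{k}\xi-p_{k}|=\frac{1}{q_{k}\xi_{k+1}+q_{k-1}}$ and sandwich the denominator between $q_{k+1}$ and $q_{k+1}+q_{k}$. The paper instead works with the pair of classical facts that $\left|\frac{p_{k+1}}{q_{k+1}}-\frac{p_{k}}{q_{k}}\right|=\frac{1}{q_{k}q_{k+1}}$ and that consecutive convergents straddle $\xi$: the upper bound for $q_{k+1}$ comes from $q_{k}^{-\tau_{k}}=|\xi-p_{k}/q_{k}|<|p_{k+1}/q_{k+1}-p_{k}/q_{k}|$, and the lower bound from the triangle inequality together with the monotone decrease of $|\xi-p_{k}/q_{k}|$. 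Both are standard textbook arguments and give the same two-sided estimate; your route has the small advantage of delivering explicit constants ($\tfrac{1}{2q_{k+1}}\le |q_{k}\xi-p_{k}|\le\tfrac{1}{q_{k+1}}$) in one stroke, while the paper's route avoids introducing the complete quotient $\xi_{k+1}$ and stays purely with the convergents.

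One small point worth cleaning up: for $\tau_{k}\geq 2$ you hedge with ``up to an absolute constant, absorbed into $\asymp$,'' but in fact no such fudge is needed. Your sandwich already gives the \emph{exact} inequality $|q_{k}\xi-p_{k}|\leq \tfrac{1}{q_{k+1}}$, hence $|\xi-p_{k}/q_{k}|\leq \tfrac{1}{q_{k}q_{k+1}}\leq q_{k}^{-2}$ for $k\geq 1$, which is precisely $\tau_{k}\geq 2$ with no constant to worry about. Since $\tau_{k}\geq 2$ is an exact statement, not an asymptotic one, trying to absorb a constant into $\asymp$ would actually not suffice; fortunately your estimate is already clean.
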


\begin{proof}
	It is well-known from the theory of continued fractions 
	(see Perron~\cite{perron}) that
	\[
	\frac{1}{2q_{k+1} } < \frac{1}{ q_k + q_{k+1} } < |q_k\xi-p_k|<\frac{1}{q_{k+1}}.
	\]
	Hence we get $q_{k+1}\asymp |q_k\xi-p_k|^{-1}$, 
	consequently $q_{k+1}\asymp q_k^{\tau_k-1}$.
\end{proof}

Complementary to Proposition~\ref{kp}, we 
require Legendre's Theorem on continued
fractions that tells us that every good approximating rational 
is a convergent, see Perron~\cite{perron}.

\begin{theorem}[Legendre]  \label{lege}
	If $\xi\in\mathbb{R}$ and $p/q$ is rational and satisfies $|p/q-\xi|< q^{-2}/2,$ then $p/q$
	is a convergent to $\xi$. 
\end{theorem}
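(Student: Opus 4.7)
The plan is to carry out the classical continued-fraction argument, essentially the one found in Perron. First, I would write $p/q$ in lowest terms. Since every rational admits exactly two finite continued fraction expansions, one of each parity (via $[a_0;a_1,\ldots,a_n]=[a_0;a_1,\ldots,a_n-1,1]$ when $a_n\geq 2$), I would pick the expansion $p/q=[a_0;a_1,\ldots,a_n]=p_n/q_n$ of length $n$ such that $(-1)^n(\xi-p/q)>0$, employing the standard convention $p_{-1}=1$, $q_{-1}=0$ so that even the degenerate case $n=0$ fits the framework. This parity choice is the crux of the sign bookkeeping later.

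Next, I would define $\omega\in\mathbb{R}$ implicitly by
\[
\xi=\frac{p_n\omega+p_{n-1}}{q_n\omega+q_{n-1}},
\]
which is well-defined since the hypothesis forces $\xi\neq p/q$ and hence the denominator is nonzero. Using the classical identity $p_{n-1}q_n-p_nq_{n-1}=(-1)^n$, direct algebra yields
\[
\xi-\frac{p_n}{q_n}=\frac{(-1)^n}{q_n(q_n\omega+q_{n-1})}.
\]
The parity choice makes $q_n\omega+q_{n-1}$ positive, and the hypothesis $|\xi-p/q|<q^{-2}/2$ then forces $q_n\omega+q_{n-1}>2q_n$, i.e.\ $\omega>2-q_{n-1}/q_n>1$. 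For $n\geq 2$ the strict inequality $q_{n-1}<q_n$ is automatic; the small cases $n\in\{0,1\}$ reduce (after using the ambiguity in the expansion of $p/q$) to elementary verifications, e.g.\ that $|\xi-p|<1/2$ for $p\in\mathbb{Z}$ forces $p$ to equal either the zeroth or the first convergent of $\xi$.

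Finally, since $\omega>1$, it itself admits a canonical continued fraction expansion $\omega=[a_{n+1};a_{n+2},\ldots]$ with $a_{n+1}\geq 1$. Substituting back produces $\xi=[a_0;a_1,\ldots,a_n,a_{n+1},a_{n+2},\ldots]$, which is a bona fide continued fraction representation of $\xi$. Reading off the $n$-th convergent of this expansion shows $p/q=p_n/q_n$ is a convergent of $\xi$, as required. The main obstacle I anticipate is purely combinatorial: without the correct parity choice one obtains $\omega$ negative or in $(0,1)$, and the resulting expression cannot be recognised as the canonical continued fraction of $\xi$. Once parities are matched to the side on which $\xi$ lies relative to $p/q$, everything drops out of the identity for $p_{n-1}q_n-p_nq_{n-1}$.
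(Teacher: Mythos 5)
The paper does not prove Legendre's theorem itself but only refers to Perron's book; your sketch is precisely the classical continued-fraction argument one finds there, and its structure (pick the expansion of $p/q$ whose parity matches the sign of $\xi - p/q$, solve for the tail $\omega$, show the hypothesis forces $\omega>1$, and read off the $n$-th convergent of the resulting expansion of $\xi$) is sound. One small slip: the hypothesis $|\xi-p/q|<q^{-2}/2$ does \emph{not} force $\xi\neq p/q$, since $\xi=p/q$ satisfies it trivially; you need to dispose of that degenerate case separately, where the conclusion is immediate, before $\omega$ can be defined. Also, the special treatment of $n\in\{0,1\}$ is superfluous: with the conventions $q_{-1}=0$, $q_0=1$ one has $0\leq q_{n-1}\leq q_n$ for every $n\geq 0$, and the weak inequality already gives $\omega>2-q_{n-1}/q_n\geq 1$, so strictness of $q_{n-1}<q_n$ is never needed.
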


We will first prove Theorem~\ref{th03}.
	
\begin{proof}[Proof of Theorem~\ref{th03}]
	Let $\lambda_{i}\geq 2$ for $1\leq i\leq n$ be fixed throughout.
		The upper bound in \eqref{eq:gans} follows from \eqref{eq:jarnik}
	and Theorem~\ref{uppack}. We prove the lower bound.
	As mentioned above, by \eqref{eq:most} and  Jarn\'ik's formula
	\eqref{eq:jarnik} we have
	\[
	\dim(\prod_{i=0}^{n-1} \mathscr{W}_{\lambda_{i};\mu_{i}}) 
	\geq 
	\sum_{i=0}^{n-1}\dim(\mathscr{W}_{\lambda_{i};\mu_{i}})=
	2\sum_{i=0}^{n-1} \lambda_{i}^{-1}.
	\]
	To derive the lower bound $n-1$,
	we follow a similar idea as in the proof of Theorem~\ref{th01} above.
	We show that for every $\lambda_{i},\mu_{i}$ as in the theorem, 
	the map
	\begin{align}
	\Psi_{1}:\; \prod_{i=0}^{n-1} (\mathscr{W}_{\lambda_{i};\mu_{i}}\cup \mathbb{Q}) &\longmapsto \mathbb{R}^{n-1},   \label{eq:psi1} \\
	(x_{0},x_{1},\ldots,x_{n-1}) &\longmapsto (x_{0}+x_{1}, x_{0}+x_{2},\ldots,x_{0}+x_{n-1}),  \nonumber
	\end{align}
	is surjective. Then by Proposition~\ref{pp}, the 
	domain set of $\Psi_1$ has Hausdorff dimension 
	at least $n-1$ as well, 
	and finally by Lemma~\ref{qlemma}
    when we remove $\mathbb{Q}$ from each factor in the domain of the map, the lower bound $n-1$ remains valid.
	
	\underline{Construction of the preimage}:
	We may restrict the image to real vectors within $[0,1)^{n-1}$, so
	let $(\xi_{1},\ldots,\xi_{n-1})\in [0,1)^{n-1}$ be arbitrary.
	Write
	\begin{equation*} 
	\xi_i= \sum_{\ell\ge 1 }  \frac{ d_{i,\ell} }{ 5^{\ell} }, \qquad 
	d_{i,\ell}\in \{ 0,1,\ldots,4 \},\; 1\leq i\leq n-1,
	\end{equation*}
	for their base $5$ expansions.
	We construct a preimage under $\Psi_{1}$
	in our product set
	for given $\lambda_{i},\mu_{i}$ in the theorem.
	Partition the positive integers in interval sets $(I_{j})_{j\geq 1}$
	according to the following recursion.
	Let $I_{0}=\{ 1,2\}$ and write $g_{0}=1$ and $h_{0}=2$ for the interval ends. Define
	\[
	I_{j}=\{g_{j},g_{j}+1,\ldots,h_{j}\}
	\]
	for $j\geq 1$, where $g_{j}, h_{j}$ are recursively given by
	\begin{equation} \label{eq:stets}
	g_{j}= h_{j-1}+1, \qquad h_{j}=\lceil \lambda_{i}h_{j-1}\rceil,
	\end{equation}
	where we take $i$ the residue class of $j$ modulo $n$ in the 
	representation system $\{0,1,\ldots,n-1\}$.
	Thereby we obtain
	\[
	\frac{h_{j}}{g_{j}}= \lambda_{i}+o(1), \qquad \frac{g_{j+1}}{g_{j}}= \lambda_{i}+o(1),
	\]
	as $j\to\infty$,
	with $i=i(j)$ as above, that is two consecutive right (and left) 
	interval endpoints
	roughly differ by a multiplicative factor among our
	numbers $\lambda_{0},\ldots,\lambda_{n-1}$ depending on the index.
	In case some $\lambda_i=\infty$, instead we let the according quotients
	$h_j/g_j$ tend to infinity as $j\to\infty$, and the arguments below remain valid. 
	We construct the base $5$ expansion of a real number
	\[
	x_{0}^{\ast}= \sum_{\ell\ge 1 }  \frac{ d_{0,\ell}^{\ast} }{ 5^{\ell} }, \qquad d_{0,\ell}^{\ast}\in\{ 0,1,2,3,4\},
	\]
	as follows. For $1\leq i\leq n-1$,
	let the digits $d_{0,\ell}^{\ast}$ equal the base $5$ digit of $\xi_{i}$ in intervals $I_{j}$ with $j\equiv i\bmod n$, 
	i.e. $d_{0,\ell}^{\ast}= d_{i,\ell}$ for $i\equiv j\bmod n$, 
	where $j$ is the index with $\ell\in I_j$.
	Finally, put the base $5$ digit zero for digits in
	$I_{j}$ with $j\equiv 0\bmod n$, i.e. $d_{0,\ell}^{\ast}= 0$
	if $\ell\in I_j$ with $j\equiv 0\bmod n$. Then $x_{0}^{\ast}$ is well-defined.
	It will be convenient to let $\xi_0:= x_0^{\ast}$.
	We slightly alter $x_0^{\ast}$ at the interval endpoints
	to derive another real number $x_0$ via
	\begin{equation}  \label{eq:x0stern}
	x_0= x_0^{\ast} - \sum_{k\geq 1} \frac{a_k}{5^{h_k}}, \qquad a_k\in \{0,1,2,3,4\},
	\end{equation}
	with concrete choice of $a_k$ to be defined below (in fact it suffices to consider $a_k\in \{0,1,2\}$, or 
	alternatively $a_k\in \{-1,0,1\}$). 
	Further define
	\[
	x_{i}:= \xi_{i}-x_{0}, \qquad\qquad 1\leq i\leq n-1.
	\] 
	Notice that for $1\leq i\leq n-1$ and given large $j$ 
	with $j\equiv i\bmod n$, 
	we may write
	\begin{align*}
	x_i&= \sum_{\ell\ge 1} \frac{ d_{i,\ell} }{ 5^{\ell} } - \sum_{\ell\ge 1} \frac{ d_{0,\ell}^{\ast} }{ 5^{\ell} } + \sum_{k\geq 1} \frac{a_k}{5^{h_k}}\\ &=
	\sum_{\ell=1}^{h_{j-1} } \frac{ d_{i,\ell}-d_{0,\ell}^{\ast} }{ 5^{\ell} } +  \sum_{k=1}^{j-1} \frac{a_k}{5^{h_k}} + \sum_{\ell\;>\; h_{j} } \frac{ d_{i,\ell}-d_{0,\ell}^{\ast} }{ 5^{\ell} }+
	\sum_{k\ge j} \frac{a_k}{5^{h_k}} 
	\end{align*}
	where we used that in the missing middle range at positions $\ell\in I_{j}=[h_{j-1}+1,h_{j}]\cap \mathbb{Z}$
	 the digits $d_{i,\ell}$ and $d_{0,\ell}^{\ast}$ are equal by construction, so the difference vanishes. For given $i,j$
	 with $i\equiv j\bmod n$,
	adding the first two finite sums gives a rational number
	\[
	\frac{p_{j}}{q_{j}}=\frac{p_{i,j}}{q_{i,j}}= \frac{ e_{i,j} }{ 5^{h_{j-1} } }, \qquad 
	e_{i,j}= \sum_{\ell=1}^{h_{j-1}} 5^{ h_{j-1}-\ell } (d_{i,\ell}-d_{i,\ell}^{\ast}) + \sum_{k=1}^{j-1} 5^{ h_{j-1}-h_k } a_k\in\mathbb{Z}.
	\]
	We emphasize that throughout we will assume $j\equiv i\bmod n$ whenever we just write $p_j/q_j$.
	We remark that the $p_{i,j}/q_{i,j}$ are essentially 
	obtained by chopping off the base $5$ expansion of $x_i$
	after position $h_{j-1}$.
	Notice that
	\[
	e_{i,j}\equiv d_{i,h_{j-1}} - d_{i,h_{j-1}}^{\ast} + a_{j-1}\bmod 5
	\] 
	obtained from the last terms of the two sums,
	since all other involved expressions
	are divisible by $5$. Thus it is possible to choose $a_{j-1}\in \{0,1,\ldots,4\}$ in each step so that $5\nmid e_{i,j}$ 
	by just avoiding the residue class $-(d_{i,h_{j-1}} - d_{i,h_{j-1}}^{\ast})$ modulo $5$.
	Then all $p_{i,j}/q_{i,j}=e_{i,j}/5^{h_{j-1}}$ are reduced
	(note that for different $i$ we consider different $a_{j-1}$ by the restriction $i\equiv j\bmod n$, so we have only one condition 
	for each $j$). The remainder term from
	the remaining two infinite sums can obviously be estimated by 
	\begin{equation}  \label{eq:FRi}
	|x_i- \frac{p_{i,j}}{q_{i,j}}|= \left| \sum_{\ell\;>\; h_{j} } \frac{ d_{i,\ell}-d_{0,\ell}^{\ast} }{ 5^{\ell} }+
	\sum_{k\ge j} \frac{a_k}{5^{h_k}} \right| \ll 5^{-h_{j}}.
	\end{equation}
	
	For $i=0$, there is a small twist. Since $d_{0,\ell}^{\ast}=0$
	if $\ell\in I_j$ with $j\equiv 0\bmod n$, we have for any $j\equiv 0\bmod n$ that
	\[
	x_0= \sum_{\ell=1}^{h_{j-1}} \frac{ d_{0,\ell}^{\ast} }{ 5^{\ell} }
	-  \sum_{k=1}^{j-1} \frac{a_k}{5^{h_k}} + \sum_{\ell\;>\;h_j} \frac{ d_{0,\ell}^{\ast} }{ 5^{\ell} }-
	\sum_{k\ge j} \frac{a_k}{5^{h_k}}. 
	\]
	Now again taking the rational number obtained from subtracting
	the second sum the first, we get rationals $p_{0,j}/q_{0,j}=e_{0,j}/5^{h_{j-1}}$. By an analogous argument
	excluding the residue class of $d_{0,h_{j-1}}^{\ast}\bmod 5$
	for $a_{j-1}$, we can assume that all $p_{0,j}/q_{0,j}=e_{0,j}/5^{h_{j-1}}$ 
	are already in reduced form.
	The analogue of the remainder term estimate \eqref{eq:FRi} holds
	for similar reasons for $i=0$ as well.
    
	To finish the proof,
	we need to show the following claim.
	
	\textbf{Claim:} The numbers
	 $x_{i}$ for $0\leq i\leq n-1$ all lie in the prescribed $\mathscr{W}_{\lambda_{i};\mu_{i}}\cup \mathbb{Q}$, i.e.
	 they take irrationality exponents in the 
	 corresponding intervals $[\lambda_i, \mu_i]$ unless they are rational.

	\underline{Proof of the claim}.
	Let $0\leq i\leq n-1$ be fixed. We easily check from the above that that $\mu(x_{i})\geq \lambda_{i}$. Indeed, considering
	$p_{i,j}/q_{i,j}$ constructed above for $i\equiv j\bmod n$, 
	from \eqref{eq:FRi} (which we observed
	also holds for $i=0$) and $q_{i,j}=5^{h_{j-1}}\asymp 5^{g_j}$ via
	 \begin{equation} \label{eq:STERN}
	|x_{i}-p_{i,j}/q_{i,j}|\ll 5^{-h_j}\ll 5^{-g_{j+1}}
	\ll  5^{-\lambda_{i}g_{j}}\ll q_{i,j}^{-\lambda_{i}}.
	\end{equation}
	This means $\mu(x_{i})\geq \lambda_{i}$ unless the approximations
	are ultimately constant and equal to $x_{i}$, thus $x_{i}\in\mathbb{Q}$.
	   
	We need to show the inequality
	$\mu(x_{i})\leq \Lambda/(\lambda_{i}-1)+1$ for $0\leq i\leq n-1$.
	Write $\nu_{i}=\Lambda/(\lambda_{i}-1)+1$ for simplicity.
	Assume the contrary that $\mu(x_{i})>\nu_{i}$ for some $i$.
	Then for some $\mu>\nu_{i}$ the inequality
	\begin{equation} \label{eq:property}
	|x_{i}-\frac{p}{q}| < q^{-\mu}
	\end{equation}
	has infinitely many rational solutions $p/q$. We consider $i$ 
	fixed in the sequel. 
	We distinguish two cases: $p/q$ can be among the $p_{j}/q_{j}$
	defined above,
	with the convention $i\equiv j\bmod n$ as explained above,
	or distinct from them.
	
	\underline{Case 1}: The rational in \eqref{eq:property}
	satisfies $p/q=p_{j}/q_{j}$ for some $j\equiv i\bmod n$.
	Since we have
	observed that $p_{j}/q_{j}$ with $p_{j}=e_{i,j}, q_{j}=5^{h_{j-1}}$ for some $j\equiv i\bmod n$
	are reduced, we can restrict to $p=e_{i,j}, q=5^{h_{j-1}}$.
	Hence it remains to be checked
	that the reverse inequality to \eqref{eq:STERN} holds, i.e.
	\begin{equation}  \label{eq:siehda}
	|x_{i}-\frac{p_{j}}{q_{j}}| \gg 5^{-h_{j}},
	\end{equation}
	for any $j\equiv i\bmod n$. 
	However, if $1\leq i\le n-1$, we have
	\[
	|x_{i}-\frac{p_{j}}{q_{j}}|= |  \sum_{\ell\;>\; h_{j} } \frac{ d_{i,\ell}-d_{0,\ell}^{\ast} }{ 5^{\ell} }+
	\sum_{k\ge j} \frac{a_k}{5^{h_k}}  |,
	\]
	and similarly for $i=0$. In any case,
	the first sum is fixed and the second has dominating term
	$a_j/5^{h_j}\asymp 5^{-h_j}$, so it is clear that at most one integer choice $a_{j}$ may contradict \eqref{eq:siehda}. 
	So far, 
	we have only excluded one residue class modulo $5$ above for any $a_k$. Thus, upon avoiding another residue class for $a_j$ if necessary, clearly a suitable choice of the $a_j\in\{ 0,1,2,3,4\}$ 
	remains possible.

	\underline{Case 2}: Now assume infinitely many $p/q$ 
	with property \eqref{eq:property} are distinct
	from all $p_{j}/q_{j}$, where we assume $j\equiv i\bmod n$. 
	Let $\lambda=\lambda_{i}$.
	First we settle that any such $p/q$ as
	in \eqref{eq:property} must satisfy
	\begin{equation} \label{eq:must}
	q_{u}^{\lambda-1}\ll q\ll q_{u+n}^{1/(\mu-1)},
	\end{equation}
	for some $u\equiv i\bmod n$. We can take $u$ 
	to be the unique integer satisfying
	$u\equiv i\bmod n$ and so that
	$q_{u}<q\leq q_{u+n}$.
	In \eqref{eq:STERN} we have noticed that
	$| x_{i}-\frac{p_{u}}{q_{u}}| \ll q_{u}^{-\lambda}$.
	Since $\lambda>2$, clearly the reduced fraction $p_{u}/q_{u}$ is a convergent in the continued fraction expansion of $x_{1}$ by Legendre's Theorem~\ref{lege}.
	Then Proposition~\ref{kp} gives
	that the next convergent denominator is $\gg q_{u}^{\lambda-1}$ and
	since $p/q$ is clearly also a convergent to $x_{i}$ 
	by \eqref{eq:property} and Legendre's Theorem and as $q>q_{u}$, we infer $q\gg q_{u}^{\lambda-1}$,
	the left estimate in \eqref{eq:must}. 
	The right is induced very similarly using the assumption 
	that $p/q$ satisfies $|x_{i}-p/q| < q^{-\mu}$. Indeed, by
	Proposition~\ref{kp}, the subsequent
	convergent has denominator $\gg q^{\mu-1}$. Since $p_{u+n}/q_{u+n}$
	is another reduced convergent to $x_{1}$ and $q_{u+n}\ge q$
	and $p_{u+n}/q_{u+n}\neq p/q$ by assumption of Case 2, we derive $q_{u+n}\gg q^{\mu-1}$, equivalent
	to the right bound in \eqref{eq:must}. 
	
	Now from \eqref{eq:must} we see that
	\[
	q_{u+n} \gg q_{u}^{(\lambda-1)(\mu-1)}.
	\]
	On the other hand \eqref{eq:stets} implies
	$q_{u+n}\asymp q_{u}^{\Lambda}$.
	Hence $\Lambda\geq (\lambda-1)(\mu-1)-\epsilon_{1}$, which by choice of
	$\mu>\nu_{i}$ is however false for $\epsilon_{1}$
	small enough in view of assumption \eqref{eq:happ}. 
	Thus we have derived the desired contradiction to \eqref{eq:property} in both cases
	and our claim is proved. We conclude that the dimension of our 
	Cartesian product set is at least $n-1$. 
\end{proof}

If all $\lambda_{i}=\infty$, we have already proved the claim within the proof of Theorem~\ref{th01}. 
We prove Theorem~\ref{vthm} in a similar way. 
A notable twist occurs in
Case 2. To rule out
the existence of putative good approximations of the form
$p/b^{N}$ different from the $p_{u}/q_{u}$, we cannot
use the argument above since we do not assume \eqref{eq:happ}
any longer. To find a way around this obstruction, 
we will employ the following easy lemma. 

\begin{lemma} \label{lemur}
	Let $b\geq 2$ be an integer. Let $\mathscr{T}\subseteq \mathbb{R}^{n-1}$ be 
	the set of real vectors $(\xi_{1},\ldots,\xi_{n-1})$ for which
	any $\xi_{i}$ for $1\leq i\leq n-1$, as well as any  
	$\xi_{i}-\xi_{t}$ for every index pair $1\leq i<t\leq n-1$, 
	all lie in $\mathscr{V}_{1,(b)}$. Then $\mathscr{T}$
	has full $(n-1)$-dimensional Lebesgue measure. 	In fact the complement set $\mathbb{R}^{n-1}\setminus \mathscr{T}$ 
	is of Hausdorff dimension $n-2$. 
\end{lemma}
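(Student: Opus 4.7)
The plan is to realise $\mathscr{T}^c\subseteq\mathbb{R}^{n-1}$ as a finite union of linear preimages of the exceptional subset of the line
\[
E \;:=\; \mathbb{R}\setminus \mathscr{V}_{1,(b)}\;=\;\{\xi\in\mathbb{R}:\theta_b(\xi)>1\},
\]
and then to transfer metric information from $E$ to $\mathbb{R}^{n-1}$ via Fubini. Introducing the coordinate projections $L_i(\vec\xi):=\xi_i$ for $1\leq i\leq n-1$ and the difference maps $L_{i,t}(\vec\xi):=\xi_i-\xi_t$ for $1\leq i<t\leq n-1$, each of which is a linear surjection onto $\mathbb{R}$, the definition of $\mathscr{T}$ immediately yields
\[
\mathscr{T}^c \;=\; \bigcup_{i=1}^{n-1} L_i^{-1}(E) \;\cup\; \bigcup_{1\leq i<t\leq n-1} L_{i,t}^{-1}(E).
\]

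First I would verify that $E$ has Lebesgue measure zero on the real line. Writing $E=\bigcup_{k\geq 1}\mathscr{V}_{1+1/k;\infty,(b)}$ as a countable union and invoking the Jarn\'ik--Borosh--Fraenkel formula~\eqref{eq:vsets}, each piece has Hausdorff dimension $k/(k+1)<1$ and is therefore Lebesgue null, whence $E$ is Lebesgue null.

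Next I would transfer this to $\mathbb{R}^{n-1}$ by Fubini. After a volume-preserving linear change of variables that turns $L_i$ (respectively $L_{i,t}$) into the projection onto the first coordinate, each $L^{-1}(E)$ becomes a cylinder $E\times\mathbb{R}^{n-2}$ with vanishing $(n-1)$-dimensional Lebesgue measure. As a finite union of null sets is null, $\mathscr{T}^c$ has Lebesgue measure zero and the full-measure claim is established.

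For the refined Hausdorff dimension assertion, the same linear change of variables combined with Theorem~\ref{uppack} gives the layer-wise estimate
\[
\dim\!\bigl(L^{-1}(\mathscr{V}_{1+1/k;\infty,(b)})\bigr)\;\leq\; \dim(\mathscr{V}_{1+1/k;\infty,(b)})+\dim_P(\mathbb{R}^{n-2})\;=\;\tfrac{k}{k+1}+n-2
\]
for every $k\geq 1$ and every relevant $L$. The main obstacle is to bundle these layer-wise bounds (whose supremum over $k$ is only $n-1$) into the sharp global bound $n-2$ claimed for $\mathscr{T}^c$. This requires a refined Jarn\'ik--Besicovitch-style covering argument applied directly to the $b$-adic $\limsup$ structure of $E$ in the fibred setting of the $L_i,L_{i,t}$, exploiting the scale hierarchy of the approximating rationals $p/b^N$ so that the $1/(k+1)$ contribution of the very-well-$b$-adically-approximable piece is absorbed into the transverse product factor rather than added to it.
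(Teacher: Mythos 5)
Your argument for the full-measure claim is correct and is essentially the same as the paper's: both reduce matters to showing $E:=\mathbb{R}\setminus\mathscr{V}_{1,(b)}$ is Lebesgue null (via \eqref{eq:vsets}) and then transfer this across each linear constraint $L_i$, $L_{i,t}$ to the ambient $\mathbb{R}^{n-1}$. The paper phrases the transfer through Proposition~\ref{pp}, realising $U_{i,t}$ as the image of $\mathscr{V}_{1,(b)}\times\mathbb{R}\times\mathbb{R}^{n-3}$ under a bi-Lipschitz map, whereas you invoke Fubini after a volume-preserving change of variables; these are the same computation in different clothing.

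On the ``dimension $n-2$'' assertion, the obstacle you ran into is genuine, and the correct conclusion is that the claim as stated is false for $n\geq 2$, not that a refined covering argument is missing. As you yourself compute, $E$ contains $\mathscr{V}_{1+1/k;\infty,(b)}$ for every $k\geq 1$ and these have dimensions $k/(k+1)\to 1$, so $\dim E=1$ even though $E$ is Lebesgue null. Marstrand's inequality \eqref{eq:most} then forces
$\dim\bigl(L_1^{-1}(E)\bigr)=\dim\bigl(E\times\mathbb{R}^{n-2}\bigr)\geq \dim E+(n-2)=n-1$,
hence $\dim\mathscr{T}^c=n-1$. No refinement of Jarn\'ik--Besicovitch-type covers can beat that lower bound, so the search you sketch in the last paragraph cannot succeed. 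You should also note that the paper's own proof never addresses the $n-2$ claim at all --- it establishes only the full-measure statement, which is all that the subsequent proof of Theorem~\ref{vthm} actually uses. The right reaction is to flag the final sentence of the lemma as an error rather than to spend effort trying to prove it.
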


The proof is not deep and 
only uses standard measure theoretic arguments.

\begin{proof}
	It is well-known and follows from \eqref{eq:vsets} 
	by a standard measure theoretic
	argument that the complement of $\mathscr{V}_{1,(b)}$ in $\mathbb{R}$ has Lebesgue measure $0$ (is a nullset in $\mathbb{R}$). 
	We can write the set $\mathscr{T}$ in the lemma
	as the intersection of $U=\mathscr{V}_{1,(b)}^{n-1}$
	with the sets 
	\[
	U_{i,t}:= \{ (\xi_{1},\ldots,\xi_{n-1})\in\mathbb{R}^{n-1}:\;  
	\xi_{i}-\xi_{t}\in \mathscr{V}_{1,(b)}\}, \qquad 1\leq i<t\leq n-1.
	\]
	Clearly $U$ has full $(n-1)$-dimensional Lebesgue measure
	as it is the Cartesian product of full measure sets. 
	We show that
	every set $U_{i,t}$ has full measure as well.
	Indeed, upon relabelling, it can be identified 
	with $\mathbb{R}^{n-3} \times V$ where $V\subseteq \mathbb{R}^{2}$
	is given by $(\mathscr{V}_{1,(b)}+\mathbb{R}) \times \mathbb{R}=\{ (x+y,y): x\in \mathscr{V}_{1,(b)}, y\in\mathbb{R} \}$. 
	Hence the set $V$
	is the image of $\mathscr{V}_{1,(b)}\times \mathbb{R}$ 
	under the Lipschitz map
	$(x,y)\to (x+y,y)$. Since $\mathscr{V}_{1,(b)}\times \mathbb{R}\subseteq \mathbb{R}^{2}$ 
	has full $2$-dimensional Lebesgue measure again as a 
	Cartesian product of full measure sets,
	the same applies to $V$ by Proposition~\ref{pp}. 
	Again using that it can be written as Cartesian product of full measure
	sets, we obtain that any set 
	$U_{i,t}$ has full $(n-1)$-dimensional
	Hausdorff measure. Hence
	the intersection of the finitely many $U_{i,t}$
	has full measure as well, and intersecting it with the full measure set
	$U$ again preserves the property.
\end{proof}

\begin{proof}[Proof of Theorem~\ref{vthm}]
	Let $b\geq 2$ be any integer.
	The main claim is the lower bound $n-1$ in \eqref{eq:ganslessen}.
	We proceed as in the proof of Theorem~\ref{th03}, where for obvious reasons
	we work in base $b$ instead of $5$ (very minor adaptions
	related to the choice of the $a_j$  have to be made when $b=2$).
	We show that for any choices of $\lambda_{i}\geq 1$,
	the image of the map
	\begin{align}
	\Psi_{2}:\; \prod_{i=0}^{n-1} (\mathscr{V}_{\lambda_{i},(b)} \cup \mathbb{Q}) &\longmapsto \mathbb{R}^{n-1}, \label{eq:psi2}  \\
	(x_{0},x_{1},\ldots,x_{n-1}) &\longmapsto (x_{0}+x_{1}, x_{0}+x_{2},\ldots,x_{0}+x_{n-1}),  \nonumber
	\end{align}
	contains the set $\mathscr{T}$ from Lemma~\ref{lemur}.
	Since the lemma claims that $\mathscr{T}$ has full
	$(n-1)$-dimensional measure,
	then again we can conclude with Proposition~\ref{pp} and Lemma~\ref{qlemma}.
	
	We start with arbitrary $\boldsymbol{\xi}\in \mathscr{T}$
	and derive the preimage components $x_{i}$ by the same construction as in the proof of Theorem~\ref{th03}.  
	Define the rational approximations $p_{j}/q_{j}$ to $x_{i}$
	identically as in Theorem~\ref{th03} as well.
	They are of the desired form $p/b^{N}$, with $p=p_j$ and $N=h_{j-1}$, and approximate $x_i$
	of order $\lambda_{i}$, hence
	$\theta_{b}(x_{i})\geq \lambda_{i}$, unless $x_{i}\in\mathbb{Q}$. We have to prove the reverse
	estimate $\theta_{b}(x_{i})\leq \lambda_{i}$. Notice that in contrast to Theorem~\ref{th03} we do not impose
	the assumption \eqref{eq:happ} here. However, it will follow from our setup 
	and $\boldsymbol{\xi}\in\mathscr{T}$ 
	that no sufficiently good rational approximations with
	denominator an integer power of $b$ to $x_i$
	can exist. We again distinguish the two 
	cases $p/q=p_j/q_j$ and $p/q$ not of this form.
	The proof of Case 1 is immediate
	as in Theorem~\ref{th03}, 
	however it is also implicitly contained 
	in the general argument below. So again assume otherwise 
	that $x_i$ satisfies $\theta_b(x_i)>\lambda_i$ for
	some $i\in\{0,1,\ldots,n-1\}$ that we assume fixed. 
	Then the estimate
	\begin{equation}  \label{eq:appwell}
	|x_i- \frac{p}{b^N}| \leq b^{-\mu N} < b^{-\lambda_i N}
	\end{equation}
	holds for some $\mu>\lambda_i$ and infinitely many 
	pairs of integers $p, N$.
	For the moment, let us fix such large $p,N$.
	Then $x_i$ has a string of digits all equal to $0$ 
	or all equal to $b-1$ at positions
	in the interval $J=\{ N+1,N+2,\ldots,\lfloor \mu N\rfloor \}$. 
	Note that if $N$ is large then $J$ defined above cannot be contained in some $I_j$ with $j\equiv i \bmod n$ because $\max I_j/ \min I_j= h_j/g_j= \lambda_i+o(1)$
	as $j\to\infty$ by construction and $\mu=\max J/\min J-o(1)>\lambda_i$. More precisely,
	if we let $\epsilon=(\mu-\lambda_i)/2>0$, it is clear that $J$ intersects some $I_{j_{0}}$ with $j_0\not\equiv i\bmod n$ in some interval $J^{\prime}=\{ M,M+1,\ldots,\lfloor M(1+\epsilon)\rfloor\}\subseteq J$.
	Consider first $i=0$. Recall that by construction of $x_0$, 
	it has locally the same digits as some $\xi_j$ for some $1\le j\le n-1$ within $J^{\prime}$. Hence for $j\in\{ 1,2,\ldots,n-1\}$ the residue class of $j_0$ above modulo $n$, the real number $\xi_{j}$ has a pure $0$ 
	or $b-1$ digit
	string in $J^{\prime}$. However,
	this means 
	\[
	|\xi_{j}- \frac{p_0}{b^M}| < b^{-(1+\epsilon)M}
	\]
	for some integer $p_0$ and thus
	contradicts $\xi_{j}\in \mathscr{V}_{1,(b)}$.
	Let us now consider $i\ne 0$. 
	Then, since $x_0$ has the same digits as $\xi_j$ for again $j\equiv j_0\bmod n$ within $J^{\prime}\subseteq I_{j_0}$, 
	the fact that $x_i= x_0-\xi_i$ has zero digits in $J^{\prime}$ means
	that we may write
	\[
	x_i= \frac{p_1}{b^{M-1}} + (\xi_j-\xi_i) + E, \qquad |E|\ll b^{- M(1+\epsilon)} 
	\]
	for an integer $p_1$, coming from certain base $b$ digit
	differences between certain $\xi_t$, at the positions up to $M-1$.
	But this means
	\[
	| (\xi_j- \xi_i) -x_i + \frac{p_1}{b^{M-1}} | \ll b^{ - M(1+\epsilon) }. 
	\]
	On the other hand, estimate \eqref{eq:appwell} and $J^{\prime}\subseteq J$
	imply that we may write $x_i= p_2/b^M+O(b^{ - M(1+\epsilon) })$ for an integer $p_2$.
	Combining and by triangle inequality we derive
	\[
	| (\xi_j- \xi_i) -\frac{p_2}{b^M} + \frac{p_1}{b^{M-1}} |=
	| (\xi_j- \xi_i) -\frac{p_3}{b^M}|
	 \ll b^{ -  M(1+\epsilon)}
	\]
	for some integer $p_3$, which finally again contradicts
	our assumption $\boldsymbol{\xi}\in \mathscr{T}$.
	Hence the hypothetical assumption $\theta_b(x_i)>\lambda_i$
	cannot occur,
	and the proof of the lower bound $n-1$ for the Hausdorff dimension
	is finished.
	The remaining claims follow easily when taking \eqref{eq:most}
	and \eqref{eq:vsets} into account, as explained below 
	the formulation of Theorem~\ref{vthm}.
\end{proof}

We remark that for $n=2$ and $\lambda_{0}\neq \lambda_{1}$, 
the map $\Psi_{2}$ when restricted to
$\prod_{i=0}^{n-1} \mathscr{V}_{\lambda_{i},(b)}= \mathscr{V}_{\lambda_{0},(b)}\times \mathscr{V}_{\lambda_{1},(b)}$
is not surjective.
Indeed, any rational of the form $p/b^{N}$
cannot be in the image
of $\Psi_{2}$ because $\theta_{b}(\xi)=\theta_{b}(p/b^{N}-\xi)$ is easily verified. 

The proof of Theorem~\ref{nonarxiv} and consequently
Theorem~\ref{th04} again
uses similar ideas as Theorem~\ref{th03}. We first show how Theorem~\ref{th04} can be inferred
from Theorem~\ref{nonarxiv}.  
Here we consider not very well approximable numbers in the 
one-dimensional image of our sum map.

\begin{proof}[Deduction of Theorem~\ref{th04} from Theorem~\ref{nonarxiv}]
	Anything apart from the 
	lower bound $1$
	upon our assumption \eqref{eq:assuan} on the $\lambda_{i}$,
	follows easily from  \eqref{eq:jarnik} and Theorem~\ref{uppack}
	again. 
	We can assume both $\lambda_{i}<\infty$, 
	otherwise the claim follows directly from the upper bound being $1$.
	However, by Theorem~\ref{nonarxiv},
	the image of the Lipschitz map
	\begin{align} 
	\Psi_{3}:\; (\mathscr{W}_{\lambda_{0}} \cup \mathbb{Q})\times (\mathscr{W}_{\lambda_{1}}\cup \mathbb{Q}) &\longmapsto \mathbb{R},  \label{eq:psi3}  \\
	(x_{0},x_{1}) &\longmapsto x_{0}+x_{1},   \nonumber
	\end{align}
	contains the set
	$\mathscr{W}_2$ of not very well approximable numbers. 
	As $\mathscr{W}_2$ has full $1$-dimensional Lebesgue
	measure, the claim follows from Proposition~\ref{pp}
	and Lemma~\ref{qlemma}. 
	\end{proof}

Now we turn towards the proof of Theorem~\ref{nonarxiv}.
We again use a similar construction as in the proof of 
Theorem~\ref{th03}. The core of the proof is
that our restriction to $\xi$ that are not very well approximable 
in \eqref{eq:AAL}
will guarantee that there is no other good rational approximation to $x_{i}, i=0,1$
apart from the $p_j/q_j$ constructed in the proof of the lower bound.  Indeed, with some trick (that only works for $n=2$), we show that the existence of other putative good approximations would induce good rational approximations to $\xi$, which contradicts the hypothesis $\xi\in \mathscr{W}_2$. 
	
	\begin{proof}[Proof of Theorem~\ref{nonarxiv}]
	Start with arbitrary $\xi\in \mathscr{W}_2\cap [0,1)$ with base $5$
	representation
	\[
	\xi= \sum_{\ell\ge 1} \frac{d_{\ell}}{5^{\ell}}, \qquad d_{\ell}\in\{0,1,2,3,4\}. 
	\] 
	To show \eqref{eq:AAL},
	we construct
	$x_0, x_1$ that sum up to $\xi$ and have the prescribed
	irrationality exponents $\lambda_0$ and $\lambda_1$, respectively.
	 Similar to the proof of Theorem~\ref{th03}, partition $\mathbb{N}$ into
	intervals $I_{j}=\{g_{j},g_{j}+1,\ldots,h_{j}\}$ with 
	$g_{j+1}=h_{j}+1$ and $h_j=\lceil \lambda_1 g_j\rceil$ for odd $j$
	and $h_j=\lceil \lambda_0 g_j\rceil$ for even $j$. In particular
	$h_{j}/g_{j}= \lambda_{1}+o(1)$ for odd $j$
	and $h_{j}/g_{j}= \lambda_{0}+o(1)$ for even $j$, as $j\to\infty$.
	We further repeat the construction of $x_0, x_1$ from
	that proof. Due to $n=2$, the construction can be 
	described in a slightly simpler way here, which we want to explain.
	Let $x_{0}^{\ast}$ be the real number that has
	the base $5$ digits of $\xi$ in intervals $I_{j}$ for odd $j$ and $0$ in 
	intervals $I_{j}$ for even $j$, 
	which agrees with the definition of $x_0^{\ast}$ from the proof
	of Theorem~\ref{th03} when $n=2$.
	Let vice versa $x_{1}^{\ast}$ be the real number with 
	that has the digits of $\xi$ in intervals $I_j$ for even $j$
	and zero in $I_j$ for odd $j$, which corresponds to $\xi-x_0^{\ast}$.
	This means if we write the base $5$ expansions
		\[
	x_0^{\ast}= \sum_{\ell\ge 1} \frac{ e_{\ell}  }{5^{\ell}}, \qquad x_1^{\ast}= \sum_{\ell\ge 1} \frac{ f_{\ell}}{5^{\ell}},
	\] 
	then $e_{\ell}= d_{\ell}$ if $\ell\in I_j$ for odd $j$
	and $e_{\ell}=0$ else if $\ell\in I_j$ for even $j$, and
	vice versa for $f_j$.
	Let $a_k\in\{0,1,\ldots,4\}$ for $k\geq 1$ to be chosen later on. We again modify the digits
	at the interval endpoints by considering
	\[
	x_0= x_0^{\ast} - \sum_{k\geq 1} \frac{a_{k}}{5^{h_{k} }}, \qquad x_1= x_1^{\ast} + \sum_{k\geq 1} \frac{a_{k}}{5^{h_{k} }}.
	\]
	Then
	clearly $x_{0}+x_{1}=x_0^{\ast}+x_1^{\ast}=\xi$.
	We need to show that $\mu(x_i)=\lambda_i, i=0,1$ upon the lower bound assumed on $\lambda_i$.
	First observe that when again splitting the series for $x_0^{\ast}$ and $x_1^{\ast}$, for even $j\ge 1$ we may write
	\begin{equation*}  
	x_0= \sum_{\ell=1}^{h_{j-1}} \frac{ e_{\ell}}{5^\ell} -
	\sum_{k=1}^{j-1}  \frac{a_k}{5^{h_k}} + \sum_{\ell>h_{j}} \frac{ e_\ell}{5^{\ell}} -
	\sum_{k\geq j}  \frac{a_k}{5^{h_k}},
	\end{equation*}
	and for odd $j$ similarly we have
		\begin{equation}  \label{eq:REP2}
	x_1= \sum_{\ell=1}^{h_{j-1}} \frac{ f_{\ell}}{5^\ell} +
	\sum_{k=1}^{j-1}  \frac{a_k}{5^{h_k}} + \sum_{\ell>h_{j}} \frac{ f_\ell}{5^{\ell}} +
	\sum_{k\geq j}  \frac{a_k}{5^{h_k}}.
	\end{equation}
	Here again we used the vanishing of the digits in intervals $I_j=[h_{j-1}+1,h_j]$ for $j$ with the stated parity.
	We again consider the rational numbers $p_{i,j}/q_{i,j}=p_j/q_j$,  $j\equiv i\bmod 2$, 
	obtained from the respective two first finite 
	sums in the above representation of
	$x_0,x_1$. For example, for odd $j$ related to $x_1$ this reads
	\[
	\frac{p_j}{q_j}=\frac{p_{1,j}}{q_{1,j}}= \sum_{\ell=1}^{h_{j-1}} \frac{ f_{\ell}}{5^\ell} +
	\sum_{k=1}^{j-1}  \frac{a_k}{5^{h_k}}= 
	\frac{ \sum_{\ell=1}^{h_{j-1}} f_{\ell}5^{h_{j-1}-\ell} + \sum_{k=1}^{j-1} a_k5^{ h_{j-1}-h_k}  }{5^{h_{j-1}} }. 
	\]
	 Then essentially the same arguments from the proof of Theorem~\ref{th03} show that upon choosing $a_{j-1}$
	 appropriately (avoiding the residue class $-f_{h_{j-1}}$ modulo $5$ for each odd $j$, and $e_{h_{j-1}}$ for even $j$), the rationals $p_{j}/q_{j}$ are reduced with denominator $q_{j}=5^{h_{j-1}}$ and the error from the remaining infinite sums can be estimated 
	 \begin{equation}  \label{eq:tiere}
	 |x_1-\frac{p_{j}}{q_{j}}|= \left|\sum_{\ell>h_{j}} \frac{ f_\ell}{5^{\ell}} +
	 \sum_{k\geq j}  \frac{a_k}{5^{h_k}}\right| \ll 5^{-h_j},
	 \end{equation}
	 and similarly for $x_0$, i.e. 
	 as in \eqref{eq:FRi}. Hence $\mu(x_{i})\geq \lambda_{i}$, for $i=0,1$
	unless $x_{i}\in\mathbb{Q}$, can be derived as in the proof of Theorem~\ref{th03}. 
	Again the main difficulty is to show the converse
	$\mu(x_{i})\leq \lambda_{i}$, $i=0,1$. By symmetry, we restrict to $i=1$, and occasionally comment how to modify the arguments for $i=0$.
	
	Assume conversely, 
	for some $\mu>\lambda_{1}$ we have infinitely many $p/q$ with
	\begin{equation} \label{eq:bas}
	|x_{1}-\frac{p}{q}| \leq q^{-\mu}.
	\end{equation}
    Again we split into two cases according to the cases of
    rational approximations $p/q$ being among $p_{j}/q_{j}$ above for
    some odd $j$, or not.
    Case 1: $p/q=p_j/q_j$ for some odd $j$. Then 
    again $p_{j}/q_{j}$ are reduced and by 
    the same argument as in Theorem~\ref{th03} 
    satisfy $|x_1-p_j/q_j|\asymp 5^{-h_j}\asymp q_j^{-\lambda_1+o(1)}$ as $j\to\infty$ for a proper choice of $a_j$, 
    so \eqref{eq:bas} is impossible.
    
    Case 2: $p/q$ is not among $p_{j}/q_{j}$ above for odd $j$. 
    We will show that \eqref{eq:bas} implies $\mu(\xi)>2$, contradicting
    our hypothesis $\xi\in \mathscr{W}_{2}$.
    For given $p/q$, let again $u$ be the largest odd
    index with 
    $5^{h_{u-1}}=q_{u}<q$. Clearly $q_{u}<q\leq q_{u+2}$.
    Very similarly as in the proof of Theorem~\ref{th03}, Legendre Theorem implies
    \[
     q_{u}^{\lambda_{1}-1}\ll q\ll q_{u+2}^{1/(\mu-1)}<
     q_{u+2}^{1/(\lambda_{1}-1)}.
    \]
    Let $\Lambda=\lambda_{0}\lambda_{1}$.
    Now since $q_{u+2}\asymp q_{u}^{\Lambda}$, and as by \eqref{eq:FRi} 
    we have $q_u=5^{h_{u-1}}\asymp 5^{g_{u}}$,
    we derive
    \begin{equation} \label{eq:finaleole}
    5^{g_{u}(\lambda_{1}-1)} \ll q \ll  5^{\Lambda g_{u}/(\lambda_{1}-1)}.
    \end{equation}
    We remark that this implies $\lambda_0\geq (\lambda_1-1)^2/\lambda_1$,
    however we will not require this conclusion.
    
    The next important step is to construct good rational approximations to $\xi-x_1$.
    Since $u$ is odd, we may apply \eqref{eq:REP2} for $j=u$, and 
    when we split the third sum as
    \[
    \sum_{\ell>h_{j}} \frac{ e_\ell}{5^{\ell}}= 
    \sum_{\ell=h_{j}+1}^{h_{j+1}} \frac{ f_\ell}{5^{\ell}} +
    \sum_{\ell>h_{j+1}} \frac{ f_\ell}{5^{\ell}}
    \]
    and take the first summand of the last sum to the front, 
    we may rearrange
    \begin{align*}
    \xi-x_{1} &=\sum_{\ell\ge 1} \frac{d_{\ell}}{5^{\ell}} - 
    \left(\sum_{\ell=1}^{h_{u-1}} \frac{ f_{\ell}}{5^\ell} +
    \sum_{k=1}^{u-1}  \frac{a_k}{5^{h_k}} + \sum_{\ell>h_{u}} \frac{ f_\ell}{5^{\ell}} +
    \sum_{k\geq u}  \frac{a_k}{5^{h_k}}\right)\\
    &=\left( \sum_{\ell=1}^{h_{u}} \frac{d_{\ell}}{5^{\ell}} 
    +\sum_{\ell=h_{u}+1}^{h_{u+1}}  \frac{d_{\ell}}{5^{\ell}} 
    - 
    \sum_{\ell=1}^{h_{u-1}} \frac{ f_{\ell}}{5^\ell} 
    - \sum_{\ell=h_{u}+1}^{h_{u+1}} \frac{ f_{\ell}}{5^\ell}
    -
    \sum_{k=1}^{u}  \frac{a_k}{5^{h_k}}\right) \\ &+ 
    \sum_{\ell>h_{u+1}} \frac{d_{\ell}}{5^{\ell}}
    -\sum_{\ell>h_{u+1}} \frac{ f_\ell}{5^{\ell}} -
    \sum_{k\geq u+1}  \frac{a_k}{5^{h_k}}.
    \end{align*}
    But now by construction and since $u$ is odd, we have $d_{\ell}=f_{\ell}$ for $\ell\in I_{u+1}= [h_{u}+1,h_{u+1}]$.
    Hence the above expression simplifies to
    \[
    \xi-x_{1}= \left( \sum_{\ell=1}^{h_{u}} \frac{d_{\ell}}{5^{\ell}} 
    - 
    \sum_{\ell=1}^{h_{u-1}} \frac{ f_{\ell}}{5^\ell} 
    -
    \sum_{k=1}^{u}  \frac{a_k}{5^{h_k}}\right) + 
    \sum_{\ell>h_{u+1}} \frac{d_{\ell}}{5^{\ell}}
    -\sum_{\ell>h_{u+1}} \frac{ f_\ell}{5^{\ell}} -
    \sum_{k\geq u+1}  \frac{a_k}{5^{h_k}}.
    \]
    The finite sums in the brackets add up to a rational number $r/s$ 
    with denominator $s=5^{h_u}$ (we may again assume it is reduced
    upon avoiding some $a_j$, however we do not require this here).
    The remaining sums are obviously of order $\ll 5^{-h_{u+1}}\ll 5^{-\Lambda g_u}$,
    thus
    \[
    |\xi-x_{1}-\frac{r}{s}| \ll 5^{-\Lambda g_{u}}.
    \]
    Combining with \eqref{eq:bas} and $\mu>\lambda_{1}$ yields
    \begin{equation} \label{eq:eX}
    |\xi-(\frac{p}{q}+\frac{r}{s})| \leq |\xi-x_{1}-\frac{r}{s}|+|x_{1}-\frac{p}{q}|\ll \max\{ q^{-\mu}, 5^{-\Lambda g_{u}}  \} \ll \max\{ q^{-\lambda_{1}}, 5^{-\Lambda g_{u}}  \}.
    \end{equation}
    We distinguish two cases. Firstly assume the right bound in the maximum is larger, i.e. $q>5^{\lambda_{0}g_{u}}$.
    Then the rational number $M/N:=p/q+r/s=(ps+qr)/(qs)$ satisfies
    \[
    |\xi-\frac{M}{N}| \ll 5^{-\Lambda g_{u}}.
    \]
    On the other hand, as $u$ is odd, its
    denominator can be estimated as
    \begin{equation} \label{eq:RRr}
    N= qs=5^{h_{u}}q\ll 5^{\lambda_{1} g_{u}}q.
    \end{equation}
    Thus we have
    \[
    -\frac{\log |\xi-\frac{M}{N}|}{\log N} \geq 
    \frac{\log 5\cdot \Lambda g_{u}}{\log q+ \log 5\cdot \lambda_{1} g_{u}}.
    \]
    The right hand side is obviously decreasing in $q$ and thus in view 
    of the upper bound in \eqref{eq:finaleole} 
    after some calculation we derive
    \begin{equation}  \label{eq:modif}
    -\frac{\log |\xi-\frac{M}{N}|}{\log N} \geq \frac{\Lambda}{\frac{\Lambda}{\lambda_{1}-1}+\lambda_{1}}-\epsilon_{1}=
    \frac{\lambda_{0}(\lambda_1 -1)}{ \lambda_0+\lambda_1-1}-\epsilon_{1},
    \end{equation}
    with $\epsilon_{1}$ arbitrarily small for large enough $u$.
    The right hand side increases as a function of both
    $\lambda_0, \lambda_1$, hence a short calculation verifies
    that condition \eqref{eq:assuan} suffices for the conclusion
    $\mu(\xi)>2$ upon choosing $\epsilon_{1}$ small enough. Thus we have the desired contradiction to $\xi\in \mathscr{W}_{2}$.  
    (We notice that if
    only finitely many $M/N$ would occur, then $M/N=\xi$ is ultimately constant
    thus $\xi\in\mathbb{Q}$, again
    contradicting our hypothesis $\xi\in \mathscr{W}_{2}$.)
    A very similar argument applies for $i=0$, we end up
    at \eqref{eq:modif} with interchanged $\lambda_0, \lambda_1$, and
    the same argument applies.
    Finally assume the left expression in \eqref{eq:eX} is larger, thus
    $q\leq 5^{\lambda_{0}g_{u}}$.
    Then from \eqref{eq:eX}, \eqref{eq:RRr} we infer
    \[
    -\frac{\log |\xi-\frac{M}{N}|}{\log N} \geq 
    \frac{\lambda_{1} \log q }{\log q+ \log 5\cdot \lambda_{1}g_{u}}.
    \]
    Since the right hand side expression 
    increases in $q$, by \eqref{eq:finaleole} we conclude
    \begin{equation}  \label{eq:modif2}
    -\frac{\log |\xi-\frac{M}{N}|}{\log N} \geq \frac{\lambda_{1}\cdot (\lambda_{1}-1)}{(\lambda_{1}-1)+ \lambda_{1}}-\epsilon_{2}= \frac{\lambda_{1}^{2}-\lambda_{1}}{2\lambda_{1}-1}-\epsilon_{2}.
    \end{equation}
    Hence again $\mu(\xi)>2$ as soon as $\lambda_{1}>\rho$
    and $\epsilon_{2}$ is sufficiently small, again contradicting $\xi\in\mathscr{W}_{2}$. Similarly we require $\lambda_{0}>\rho$ for the contradiction
    when we start with $i=0$.
     Thus condition \eqref{eq:assuan} guarantees
    the implication in any case.
    
    Finally by a similar argument we show \eqref{eq:FF}.
    If we assume $\mu(\xi)>\tilde{\mu}$ for any given 
    $\tilde{\mu}\geq 2$ in place of $\mu(\xi)>2$, then similarly as above Case 1 cannot happen
    and in Case 2 we get a contradiction as soon as the right hand
    sides in \eqref{eq:modif}, \eqref{eq:modif2} exceed $\tilde{\mu}$, and
    similarly for $\lambda_0$ as well.
    This means upon these assumptions we have
    \[
    \mathscr{W}_{\lambda_0} + \mathscr{W}_{\lambda_1} \supseteq 
    \mathscr{W}_{2;\tilde{\mu}},
    \]
    or equivalently the complement of the sumset is contained in $\mathscr{W}_{\tilde{\mu};\infty}$. The left estimate in
    \eqref{eq:FF} thus follows
    from Jarn\'ik's formula \eqref{eq:jarnik} for the Hausdorff dimensions
    of the latter sets, and again using that the right
    hand sides in \eqref{eq:modif}, \eqref{eq:modif2} represent
    increasing functions. The upper bound $1$ in \eqref{eq:FF} 
    follows from the bound \eqref{eq:assuan} on the $\lambda_i$.
	\end{proof}

    \section{Proofs of generalisations in~\S~\ref{gener} }

    We first sketch how to extend
    Theorem~\ref{nonarxiv} to simultaneous approximation.

    \begin{proof}[Proof of Theorem~\ref{multidim}]
	Starting with a vector $\boldsymbol{\xi}\in\mathbb{R}^{m}$,
	we may apply the construction from the proof of Theorem~\ref{nonarxiv} coordinatewise with the same intervals $I_j$ for each coordinate
	to obtain $\boldsymbol{x}_0, \boldsymbol{x}_1$ that sum up to $\boldsymbol{\xi}$.
	We verify $\mu_m(\boldsymbol{x}_i)=\lambda_i$ for $i=0,1$.
	The lower bounds $\mu_{m}(\boldsymbol{x}_{i})\geq \lambda_{i}$
	follow analogously as for $m=1$, without restrictions
	on $\lambda_{i}$.
	For the reverse estimates, Case 1 follows again very similarly.
	In Case 2, we first find a generalisation
 of Proposition~\ref{kp} to general $m$, derived
from the one-dimensional case.

\begin{proposition}  \label{trotz}
   Let $\tau_k>1$. Let $1<q_k<q_{k+1}$ be integers and assume
     \[
   \max_{1\le i\le m}\vert \xi_i-p_{k,i}/q_k\vert<q_k^{-\tau_k}/2, \qquad \max_{1\le i\le m}\vert \xi_i-p_{k+1,i}/q_{k+1}\vert<q_{k+1}^{-1}/2
   \]
   hold
for rational vectors $\boldsymbol{p}_k/q_k, \boldsymbol{p}_{k+1}/q_{k+1}$ in reduced form. Then $q_{k+1}\gg q_{k}^{\tau_k-1}$. 
\end{proposition}

\begin{proof}
The deduction from Proposition~\ref{kp} 
is obvious if for some $i$ the pair $(q_k,p_{k,i})$ is coprime, but this need not be the case. Otherwise, fix some $i$ with largest common divisor 
$T= q_k^{a}>1$ for some $a\in (0,1]$ and consider the 
reduced fraction $p^{i,0}/q^0=(q_{k}/T)/(p_{k,i}/T)$
which is a convergent to $\xi_i$ since $\tau_k>1$.
We see that $q^0=q_k^{1-a}$ 
and so
\[
\vert p^{i,0}/q^0- \xi_i\vert= \vert p_{k,i}/q_k-\xi_i\vert< q_k^{-\tau_k} = (q^0)^{-\tau_k/(1-a) },
\]
thus by  Proposition~\ref{kp} the next convergent
to $\xi_i$ is of order 
\[
\gg (q^0)^{\tau_k/(1-a) - 1 }= q_k^{ (1-a)(\tau_k/(1-a) - 1) }= q_k^{\tau_k-1+a }
\ge q_k^{ \tau_k - 1 }.
\]
Hence by Legendre Theorem~\ref{lege} also $q_{k+1}\gg q_k^{ \tau_k - 1 }$.
\end{proof}

From Proposition~\ref{trotz}, we again conclude that \eqref{eq:finaleole} holds. Then we proceed precisely as in the one-dimensional setting. Since Lebesgue almost all $\boldsymbol{\xi}\in\mathbb{R}^{m}$
	are only simultaneously approximable of order $\mu_{m}(\boldsymbol{\xi})=1+\frac{1}{m}$, 
	according to \eqref{eq:modif}, \eqref{eq:modif2} we eventually 
	end up at the conditions \[
	\frac{\lambda_i(\lambda_{1-i}-1)}{\lambda_i+\lambda_{1-i}-1}>1+\frac{1}{m},\qquad  \frac{\lambda_{i}^{2}-\lambda_{i}}{2\lambda_{i}-1}>1+\frac{1}{m}
	\]
	for $i=0,1$.
	Thus, for the analogous full measure result, 
	as $m\to\infty$ the lower bound $\min \lambda_{i}>\gamma+o(1)$ suffices, giving rise to $\rho_m$ as in the theorem.
	Finally \eqref{eq:FF00}
	follows similarly from a well-known generalisation of \eqref{eq:jarnik} to higher dimension with right hand side $(m+1)/\lambda$, also due to Jarn\'ik~\cite{jarnik}.
\end{proof}

We explain the modifications to be made to obtain
the more general Theorem~\ref{ttt}.

\begin{proof}[Proof Theorem~\ref{ttt}]
	We define $x_0^{\ast}$ as in the proof of Theorem~\ref{nonarxiv} and again choose integers $a_k$ to analogously infer $x_0, x_1$, 
	however now do not restrict
	to $a_k\in\{0,1,2,3,4\}$ any longer. Assume for given $j$, we have already defined the initial elements $h_1, h_2,\ldots,h_{j-1}$ 
	and $a_1,\ldots, a_{j-1}$, and the corresponding reduced 
	$p_j/q_j$ with $q_j=5^{h_{j-1}}$.
	We define the next $h_j=-\lfloor \log \Phi_i(q_j)/\log 5\rfloor+j$ 
	 with $i\in\{ 0,1\}$ so that $i\equiv j\bmod 2$. Then
	 \begin{equation}  \label{eq:TRUCK}
	 \Phi_i(q_j)\asymp 5^{-h_j + j}.
	 \end{equation}
	If $i=1$, we split \eqref{eq:tiere} as
	 \begin{equation}  \label{eq:viecher}
	 |x_1-\frac{p_j}{q_j}|=| \sum_{\ell>h_j} \frac{f_{\ell}}{5^{\ell}} + \sum_{k\ge j}\frac{ a_j}{5^{h_j}}|=
	 | (\sum_{\ell>h_j} \frac{f_{\ell}}{5^{\ell}} + \frac{ a_j}{5^{h_j}})+
	 \sum_{k>j}  \frac{ a_k}{5^{h_k}}|
	 \end{equation}
	and accordingly for $i=0$. Let us for simplicity 
	assume $i=1$ from now on, the other case works analogously.
	We take the next $a_{j}$ so that the absolute value of
	the bracket expression
	\[
	E_{j}:=  \sum_{\ell>h_j} \frac{f_{\ell}}{5^{\ell}} + \frac{ a_j}{5^{h_j}}
	\]
	is 
	at most $\Phi_1(q_j)$ and as close as possible to it, 
	unless if the residue class condition modulo $5$ for $p_{j+1}/q_{j+1}$ 
	to be reduced in the next step (see proof of Theorem~\ref{nonarxiv}) fails, 
	in which case we alter it by $\pm 1$ to get the next nearest number
	of this form smaller than $\Phi_1(q_j)$. 
	It is clear that then 
	\begin{equation}  \label{eq:puha}
	0\le \Phi_1(q_j)-|E_j|\le 2\cdot 5^{-h_j}.
	\end{equation}
	First assume $a_j\ge 0$. Then clearly also $E_j\ge 0$ and
	\eqref{eq:puha} implies
	\[
	a_j/5^{h_j}\le \Phi_1(q_j) - \sum_{\ell>h_j} \frac{f_{\ell}}{5^{\ell}} \le \Phi_1(q_j) 
	\]
	hence by \eqref{eq:TRUCK} we infer
	\[
	0\le a_j\le 5^{h_j} \Phi_1(q_j) \ll 5^{j}.
	\]
	Thus, 
	the remaining expression in \eqref{eq:viecher} can be bounded by
	\begin{equation}  \label{eq:Zwerg}
	|\sum_{k>j} \frac{ a_k}{ 5^{h_k} }| \ll 5^{j-h_{j+1}}= o(5^{-h_j}),
	\end{equation}
where for the last estimate we used
	that the sequence $(h_j)_{j\ge 1}$ grows exponentially. Indeed,
	the decay assumption on $\Phi_1$ and \eqref{eq:TRUCK} lead to 
	\[
	5^{-h_j}\le 5^{-h_j+j}\ll \Phi_1(q_j)\le q_j^{-\lambda_i}\le 5^{ -h_{j-1}\rho }
	\]
	implying $h_j/h_{j-1} > \rho-o(1)>1$ for large $j$,
	and hence the claim.
	From \eqref{eq:viecher}, \eqref{eq:puha} and \eqref{eq:Zwerg},
	we infer that for large $j$ we may write
	\[
	|x_1-\frac{p_j}{q_j}| = \Phi_1(q_j)-R,
	\]
	for some remainder term $0\le R\le 3\cdot 5^{-h_j}$. 
	Thus, by choice of $h_j$,
	we infer
	\[
	0<1-\frac{|x_1-p_j/q_j|}{\Phi_1(q_j) } = \frac{R}{\Phi_1(q_j)}
	\ll \frac{5^{-h_j}}{\Phi_1(q_j)} \ll 5^{-j}.
	\]
	The expression tends to $0$ as $j\to\infty$. 
		Now assume $a_j<0$. Then $E_j<0$ and thus similarly 
	\eqref{eq:puha} implies the analogous estimate
	\[
	0\le |a_j|=-a_j\leq \Phi_1(q_j) - \sum_{\ell>h_j} \frac{f_{\ell}}{5^{\ell}}\le \Phi_1(q_j)
	\]
	and the same argument as above applies.
	A very 
	similar line of arguments applies to $i=0$. 
	Finally, in Case 2 of
	other rationals $p/q\neq p_j/q_j$, we get that $|x_i-p/q|/\Phi_i(q)\to \infty$
	from the same line of arguments as in Theorem~\ref{nonarxiv} upon
	our assumptions on the $\Phi_i$.
	Combining these facts yields $x_i\in Exact(\Phi_{i})$ for $i=0,1$.
	The last claim \eqref{eq:phan} on packing dimension again follows via Theorem~\ref{uppack} by 
	\[
	\dim_P(Exact(\Phi_1))\geq \dim( \prod_{i=0,1} Exact(\Phi_i)) - \dim(Exact(\Phi_0))\ge 1-\dim(Exact(\Phi_0))
	\] 
	and choosing any $\Phi_{0}$
	of very fast decay so that the last term vanishes by \eqref{eq:jarnik}.
	\end{proof}

We finally describe how to alter the construction of Theorem~\ref{nonarxiv} 
for missing digit Cantor sets.
The main obstacle is that we require $p_j/q_j$ to be reduced for Case 1. We 
may not be able to choose integers
$a_j$ as before without leaving the Cantor set for either $x_0$ or $x_1$. To avoid this problem, we slightly redefine our intervals $I_j$ and our restriction to $\mathscr{V}_{1,(b)}$ in \eqref{eq:friederich} enters. We keep the notation from the proof of Theorem~\ref{nonarxiv} above.

\begin{proof}[Proof of Theorem~\ref{PREIS}]
	Assume $0\in W$.
	Let $\xi\in \mathscr{V}_{1,(b)}\cap \mathscr{W}_2\cap K$ be arbitrary and $\varepsilon>0$. Write $\xi=\sum_{\ell\ge 1} d_{\ell}b^{-\ell}$ for its
	base $b$ representation, with $d_{\ell}\in W$. 
	The fact $\xi\in \mathscr{V}_{1,(b)}$ guarantees that there are no
	long blocks of consecutive $0$ digits
	in its base $b$ representation, more precisely
	in intervals $[N,(1+\varepsilon)N]$ for 
	any $\varepsilon>0$ and $N\geq N_0(\varepsilon)$. 
	Hence we can modify our sequences
	$g_j, h_j$ from the proof of Theorem~\ref{nonarxiv} so that the quotients $h_j/g_j$ 
	still tend to $\lambda_i$ with $i\in\{0,1\}$ so that $i\equiv j\bmod 2$, and	additionally at the first position $\ell=g_{j+1}=h_j+1$ 
	of $I_{j+1}$ the digit is in $d_{\ell}\in W\setminus \{0\}$. 
	Derive $x_0^{\ast}, x_1^{\ast}$ similar to Theorem~\ref{nonarxiv}
	via
		\[
	x_0^{\ast}= \sum_{\ell\ge 1} \frac{ e_{\ell}  }{b^{\ell}}, \qquad x_1^{\ast}= \sum_{\ell\ge 1} \frac{ f_{\ell}}{b^{\ell}},
	\] 
	where we let $e_{\ell}=d_{\ell}$ if 
	$\ell\in I_j$ for odd $j$ and $e_{\ell}=0$ otherwise, and vice versa
	for $f_{\ell}$.
	Now we simply let $x_0=x_0^{\ast}, x_1=x_1^{\ast}$, without twisting the digits at interval endpoints. Then for each even $j$
	we have
		\begin{equation*}  
	x_0= \sum_{\ell=1}^{h_{j-1}} \frac{ e_{\ell}}{b^\ell}  + \sum_{\ell> h_{j}} \frac{ e_\ell}{b^{\ell}}, \qquad e_{\ell}\in W,
	\end{equation*}
	and for odd $j$ we have
	\begin{equation*}  
	x_1= \sum_{\ell=1}^{h_{j-1}} \frac{ f_{\ell}}{b^\ell}+ \sum_{\ell> h_{j}} \frac{ f_\ell}{b^{\ell}}, \qquad f_{\ell}\in W.
	\end{equation*}
	Notice that the digits at positions $h_{j-1}+1,\ldots,h_j$ vanish.
	By construction, in the infinite sums for $j$ in question, at first 
	position $\ell=h_j+1$ the digit $e_{\ell}$ of $x_0$ resp. $f_{\ell}$
	of $x_1$ is non-zero.
	Notice that $x_0+x_1= x_0^{\ast}+x_1^{\ast}=\xi$ and $x_i\in K$.
	We define $p_j/q_j$ for $j\equiv i\bmod 2$ likewise as in Theorem~\ref{nonarxiv} 
	as the rational number obtained from the respective finite partial sums, in particular $q_j=b^{h_{j-1}}$.
	By the above properties, it is obvious
	that for $i=0,1$ and $j\equiv i \bmod 2$, as $j\to\infty$ we have
	\begin{equation} \label{eq:Quit}
	|x_i-p_j/q_j|\asymp b^{-h_{j}}\asymp b^{-\lambda_i h_{j-1} +o(1)} \asymp q_j^{-\lambda_i+o(1)},
	\end{equation}
	in particular $\mu(x_i)\ge \lambda_i$.
	For the reverse inequalities that settle \eqref{eq:friederich}, we again assume the opposite, namely we have $p/q$ that approximate some $x_i$
	of order larger than $\lambda_i$
	and consider the cases $p/q=p_j/q_j$ and $p/q$ not of
	this form separately. The latter Case 2 
	can be handled precisely as in the proof of Theorem~\ref{nonarxiv},
	here our assumption $\xi\in \mathscr{W}_2$ enters. 
	In Case 1, here we have the problem that we lack the twist
	with the $a_j$ to guarantee that $x_i$ have non-zero digit at positions
	$h_{j-1}$ when $j\equiv i\bmod 2$, and that consequently we cannot
	deduce that $p_j/q_j$ are reduced
	in the stated form with denominator $b^{h_{j-1}}$. 
	A priori it may happen that $p_j/q_j$
	is not reduced, and after reduction the approximation exponent
	may be larger than predicted. 
	
	To deal with this technical obstruction,
	we modify the argument as follows.
	Since $\theta_b(\xi)=1$,  given $\varepsilon>0$,
	for each large $j\ge j_0(\varepsilon)$
	at some slightly smaller position 
	\[
	z_{j-1}\in Z_{j-1}:= [h_{j-1}(1-\varepsilon),h_{j-1}]\cap \mathbb{Z}\subseteq I_{j-1},
	\]
	the number
	$\xi$ must have a non-zero digit, i.e. $d_{z_{j-1}}\in W\setminus \{0\}$. Assume $z_{j-1}$ is largest possible with this property.
	By construction, for $i\in\{0,1\}$ with $i\equiv j\bmod 2$, 
	the number $x_i$ has the same non-zero
	digit $d_{z_{j-1}}$ as $\xi$ at position $z_{j-1}$, followed by a digit string 
	of zeros up to (including) the last position of $I_{j}$.
	Write $r_j/s_j$ for the reduced fraction $p_j/q_j$.
	First
	assume $b$ is prime (more generally $(b,w)=1$ for all $w\in W\setminus\{0\}$ suffices). Then the above property 
	readily implies that
	the according fraction $p_j/q_j=p_j/b^{z_{j-1}}$ is almost reduced. 
	More precisely, after reduction it has denominator
	\begin{equation} \label{eq:gcd}
	s_j=b^{z_{j-1}}>b^{(1-\varepsilon)h_{j-1}}= q_j^{1-\varepsilon}.
	\end{equation} 
	Combining \eqref{eq:gcd} with \eqref{eq:Quit} readily 
	implies for $i=0,1$ and all large $j\equiv i\bmod 2$ that 
	\[
	|x_i-r_j/s_j|= |x_i-p_j/q_j|\gg q_j^{-\lambda_i+o(1)}
	\gg s_j^{(-\lambda_i+o(1))/(1-\varepsilon)} \gg s_j^{-\lambda_i-\epsilon_j},
	\]
	where
	$\epsilon_j>0$ tend to $0$ as $\varepsilon\to 0$ 
	and $j\to\infty$. 
	Since $\varepsilon$ can be taken arbitrarily small,
	we are done.
	If otherwise $b$ is composite, we possibly 
	have to slightly alter $x_0, x_1$ by interchanging their 
	base $b$ digits at certain places in the intervals $Z_{j-1}$ to
	guarantee $s_j>q_j^{1-\varepsilon}$ as in \eqref{eq:gcd} for the rationals analogously obtained 
	from the twisted numbers. 
	We claim that a suitable choice of digit positions
	is possible. This can be seen by first observing
	that iterating the argument for given $\varepsilon>0$, 
	we actually find many 
	positions in $Z_{j-1}$ where $\xi$ takes non-zero digits.
	Then considering variations of exchanging digits at some 
	of these places while keeping others, we finally see that some
	arising fractions must be almost reduced again. 
	We prefer to omit an exhaustive exposition of
	the slightly technical details. Moreover, the $x_i$ still sum up to $\xi$. Furthermore, the digit changes have only minor effect on the approximation quality $|x_i-r_j/s_j|$ since $|Z_{j-1}|$ is small compared to $|I_{j-1}|$.  Finally the 
	proof of Case 2 also remains essentially unaffected by these
	digital changes.
	
	 When $0\in W$, the implication \eqref{eq:KEIN} 
	follows	from the observation that the right hand side in \eqref{eq:friederich}
	still has full Hausdorff dimension $\dim(K)$. The latter
	claim is a consequence of the facts that almost all numbers in $C_{b,W}$ with respect to its natural Cantor measure (restricted Hausdorff measure of
	dimension $\log |W|/\log b$) satisfy both $\mu(\xi)=2$, proved by Weiss~\cite{weiss}, and $\theta_b(\xi)=1$, shown
	by Levesley, Salp and Velani~\cite[Corollary~1]{lsv}.
	The intersection clearly shares the same property. 
	The general case of \eqref{eq:KEIN} follows from the special case
	and Proposition~\ref{pp} via
	the rational shift map $x\to x-\min W/(b-1)$ that preserves
	the irrationality exponent $\mu$ and maps $C_{b,W}$ to 
	a missing digit Cantor set $C_{b,\tilde{W}}$ with $0\in \tilde{W}$.
	For the last implication \eqref{eq:WS}, very similar to
	Corollary~\ref{c3} we let $\lambda_0=\infty$, $\lambda_1=\lambda$
	in \eqref{eq:KEIN} and apply Theorem~\ref{uppack} and \eqref{eq:jarnik}.
	\end{proof}

   Due to the twist in the proof, we lose
   some flexibility regarding the order of approximation compared to 
   Theorem~\ref{nonarxiv}. In particular,
   we cannot guarantee an analogue of Theorem~\ref{ttt} for Cantor sets.
   From~\cite[Corollary~1]{lsv}, some weaker generalisation
   of Theorem~\ref{nonarxiv} can still
   be deduced, however we prefer not to state it here. 

   \vspace{1cm}

   {\em The author thanks the referee for the careful reading. I am further thankful to Sidney A. Morris for bringing to my attention his joint paper with Chalebgwa on Erdos-Liouville sets  }

\section{Declarations and data availability}
The author declares that no funds, grants, or other support were received during the preparation of this manuscript.\\

All data generated or analysed during this study are included in this published article

\end{document}